\newtheorem{thm}{Theorem}[section]
\newtheorem{pro}[thm]{Proposition}
\newtheorem{cor}[thm]{Corollary}
\newtheorem{lem}[thm]{Lemma}
\newtheorem{defn}[thm]{Definition}
\begin{document}

\author{Mark Herman, Jonathan Pakianathan and Erg\"un Yal\c c\i n}
\title{On a canonical construction of tesselated surfaces via finite group theory, Part I.
}
\maketitle

\begin{abstract}
This paper is the first part in a 2 part study of an elementary functorial construction from the category of finite non-abelian groups to a category of singular compact, oriented $2$-manifolds. After a desingularization process this construction results in a collection of compact, connected, oriented tesselated smooth surfaces equipped with a closed-cell structure which is face and edge transitive and which has at most 2 orbits of vertices. These tesselated  surfaces can also be viewed as abstract $3$-polytopes (or as graph embeddings in the corresponding surface) which are either equivar or dual to abstract quasiregular polytopes. This construction  generally results in a large collection of tesselated surfaces per group, for example when the construction is applied to $\Sigma_6$ it yields $4477$ tesselated surfaces of 27 distinct genus and even more varieties of tesselation cell structure.

We study the distribution of these surfaces in various groups and some interesting resulting tesselations. In a second paper, we show that extensions of groups result in branched coverings between the component  surfaces in their decompositions. We also exploit functoriality to obtain interesting faithful, orientation preserving actions of subquotients of these groups and their automorphism groups on these surfaces and in the corresponding mapping class groups.

\noindent
{\it Keywords: Riemann surface tesselations, regular graph maps, strong symmetric genus}.

\noindent
2010 {\it Mathematics Subject Classification.}
Primary: 20D99;
Secondary: 05B45, 55M99, 57M20.
\end{abstract}

\tableofcontents

\section{Introduction}

There is a long tradition in the area of algebraic combinatorics of associating geometries or simplicial complexes to algebraic objects such as groups as a means of studying them. In group cohomology, various combinatorial complexes associated to groups such as the Quillen complex and the Brown complex have proven very useful tools. (See \cite{Brown}, \cite{Be})

In this paper we explore such a construction which takes a finite nonabelian group $G$ and constructs a $2$-dimensional simplicial complex $X(G)$ in a canonical manner from it. This complex turns out to have a nice geometric structure as a union of finitely many 2-dimensional pseudomanifolds which pairwise intersect in finitely many points. The pseudomanifolds are compact, connected and oriented and hence surfaces with at most singularities due to self-point-intersections, for example a sphere with 3 points identified or a torus with three meridian circles squashed to points. Here the construction $X(G)$ is canonical in the sense
it is part of a covariant functor from the category of nonabelian finite groups to the category of compact, connected, oriented, 2-dimensional triangulated
manifolds with ``singularities" of a type described precisely in Section~\ref{sec: complex} in detail.

These singularities can be resolved in a functorial way to yield an associated complex $Y(G)$ which is a disjoint union of finitely many compact, connected, oriented, triangulated 2-manifolds (we'll call these Riemann surfaces in this paper for brevity at the expense of slight abuse of notation as we will not be using complex structure at all.) The manifolds in $Y(G)$ hence form a natural set of invariants for the group. More specifically the number of times the surface of genus $g$ occurs in $Y(G)$ is a natural invariant of the group which we study. Furthermore the original
complex $X(G)$ can be shown to be homotopy equivalent to a bouquet of these manifolds with
some circles due to the connections between them.

Additionally, the Riemann surfaces arising via this functorial construction are equipped with interesting closed-cell structures naturally associated to
their triangulations and indeed we show that a slight functorial modification of the triangulations of the components yield closed-cell structures where all faces consist of a given type of polygonal face. We show these satisfy the conditions of what is called an abstract 3-polytope in the combinatorics literature. Any of these component polytopes is shown to have a face and edge transitive automorphism group and at most 2 orbits of vertices (and hence two types of vertex valency). We identify group theoretic conditions for a component polytope to be equivar (have a single valency) and identify the Schl\"afli symbol in this case. We also identify conditions for when the component forms a regular abstract 3-polytope such as a Platonic solid. In this case the component can also be considered as a regular map, i.e., a regular embedding of a graph in the underlying surface. For example this is the case when $G$ is the extraspecial $p$-group of order $p^3$ where $p$ is an odd prime which results in a regular tesselation of the surface of genus $g=\frac{p(p-3)}{2}+1$ as the union of $p$ $2p$-gons.

Constructions that yield such tesselations on Riemann surfaces are not new. There are classical constructions using triangle or Fuchsian groups,
hyperbolic geometric tesselations of the Poincare disk, Cayley graphs or fundamental group arguments to construct similar objects (see \cite{Cox}, \cite{Cox2} for example). Often a set of generators or specific presentation is chosen - our construction seems like an elementary variant, constructed in an algorithmic and functorial way from any nonabelian group.
The initial construction uses the group itself in a very direct straightforward manner with no reference to specific presentation or external geometry - it clearly displays these tesselations as intrinsic to the group itself.

On the other hand, since $X(G)$ and $Y(G)$ are determined functorially from $G$, one
can show that any automorphism of $G$ determines an orientation preserving, simplicial automorphism of $X(G)$. These simplicial automorphisms can permute the surface ``components" of $Y(G)$ but must preserve the genus and indeed the tessellation type. Thus, one can use this construction to find faithful actions of subquotients of $Aut(G)$
on various  surfaces and hence obtain embeddings of these subquotients into the group of orientation preserving diffeomorphisms of $X_g$, the surface of genus $g$.
In the second paper \cite{HP}, we explore some examples of orientation preserving group actions afforded by this construction and some corresponding embeddings into mapping class groups. We also discuss some examples pertaining to the strong symmetric genus (see \cite{TT}, \cite{MZ1}) of the group, i.e. the smallest genus orientable closed surface for which the group acts on faithfully via orientation preserving diffeomorphisms. See \cite{HP} for details.

We attempt to quantify the number and type of surface components that occur in the decomposition $Y(G)$ for a given group $G$. For example when $G$ is the alternating group on $7$ letters, the construction $Y(G)$ discussed in this paper results in $16813$ surface components of $58$ distinct genus, with even more
distinct cell-structures. Tables summarizing
the various polytopes that occur in $Y(G)$ for various groups $G$ can be found in the appendix
of \cite{HP}.

Finally in \cite{HP}, we show that extensions of groups yield branched coverings between their constituent surfaces in their decompositions.

\section{The complex $X(G)$}
\label{sec: complex}

Let $G$ be a finite nonabelian group and let $Z(G)$ be its center. We will now describe the
construction of $X(G)$ in detail - the reader is encouraged to draw pictures to help
with the definitions in this section as at first they seem elaborate though with some study, the nice structure of the complex will emerge.
We will use some basic notation from the world of simplicial complexes - for a good reference on the terminology, see \cite{Mu}.

Before we construct the simplicial complex $X(G)$, we construct an associated subgraph $G_1=(V_1,E_1)$.
The vertices of the graph $G_1$ are given by the noncentral elements of $G$, i.e., $V_1=G-Z(G)$ and are labelled as type 1 vertices, thus a typical vertex is labelled $(x,1)$ where $x \in G-Z(G)$. We declare $[(x,1), (y,1)]$ to be an edge in the simplicial graph $G_1$ if and only if $x$ and $y$ do not commute in $G$, i.e., $xy \neq yx$.

This graph is the 1-skeleton of the simply connected complex $BNC(G)$ considered in \cite{PY} and hence is path connected but we provide a simple proof of this here:

\begin{lem}
$G_1=(V_1,E_1)$ is a path connected graph.
\end{lem}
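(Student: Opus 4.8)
The plan is to argue directly, without appealing to the simple connectivity of $BNC(G)$ from \cite{PY}: it suffices to show that any two vertices $(x,1),(z,1)$ of $G_1$ are joined by a path. I would first dispose of the easy cases. If $x=z$ there is nothing to prove, and if $x$ and $z$ fail to commute then $[(x,1),(z,1)]\in E_1$ by definition of the graph and we are done. Thus the only case requiring work is a pair of distinct noncentral elements $x,z$ that \emph{do} commute with one another.

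The key step is to produce a single element $w\in G$ commuting with neither $x$ nor $z$. Granting such a $w$, note that $w$ is automatically noncentral, since a central element would lie in every centralizer; hence $(w,1)\in V_1$, and the two non-commuting relations $xw\neq wx$, $zw\neq wz$ give edges $[(x,1),(w,1)],[(w,1),(z,1)]\in E_1$, so $(x,1)-(w,1)-(z,1)$ is the desired path. To find $w$, observe that $C_G(x)$ and $C_G(z)$ are \emph{proper} subgroups of $G$ because $x$ and $z$ are noncentral, and then invoke the elementary fact that a group is never the union of two of its proper subgroups: if $G=H\cup K$, pick $h\in H\setminus K$ and $k\in K\setminus H$, and note $hk$ can lie in neither $H$ nor $K$, a contradiction. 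Applying this with $H=C_G(x)$ and $K=C_G(z)$ yields $w\in G\setminus\bigl(C_G(x)\cup C_G(z)\bigr)$, exactly as needed.

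I do not expect a genuine obstacle here; the only points to be careful about are purely bookkeeping — checking that the produced $w$ is a legitimate vertex of $G_1$ (noncentrality) and that both required edges are present (immediate from $w$ not centralizing $x$ or $z$) — together with recording the single structural input, namely that a nonabelian $G$ has enough noncentral elements for $V_1$ to be nonempty and that $C_G(x)\neq G$ precisely when $x\notin Z(G)$. In short, the ``hard part'' is just recognizing that the covering-by-two-proper-subgroups lemma is the right tool; everything else is routine.
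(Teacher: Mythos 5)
Your proof is correct and takes essentially the same approach as the paper: both arguments produce a length-two path through an element $w$ commuting with neither endpoint, using the fact that the two centralizers (proper subgroups, since $x$ and $z$ are noncentral) cannot cover $G$. The only difference is in how that covering fact is justified --- the paper counts ($|C(x)\cup C(z)|<|G|$ since each proper subgroup has at most half of $G$ and they share the identity), while you use the standard product trick showing no group is a union of two proper subgroups --- and your extra case-splitting and the noncentrality check for $w$ are harmless bookkeeping.
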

\begin{proof}
Let $(x,1), (y,1)$ be two vertices in $G_1$. Then $x, y$ are not central in $G$ so their centralizer subgroups $C(x)$ and $C(y)$ are proper subgroups
of $G$ and hence have at most half the elements of $G$ in them. Thus $|C(x) \cup C(y)| < |G|$ as $C(x),C(y)$ each have at most half the elements of $G$ in them
and they have the identity element in common. Thus there must exist $z \notin C(x) \cup C(y)$ which means there is a path of length two from $(x,1)$ to $(y,1)$
in $G_1$ through $(z,1)$ as $z$ does not commute with either $x$ or $y$. Since $(x,1), (y,1)$ were arbitrary vertices of the graph $G_1$, we conclude
that $G_1$ is path connected.
\end{proof}

We now extend the graph $G_1$ to get the simplicial complex $X(G)$. First we include a second set of vertices $V_2$ which also consists of the noncentral
elements of $G$ but with label 2, written as $(x,2)$, for $x \in G-Z(G)$. Thus the vertex set of $X(G)$ consists of the disjoint union of $V_1$ and $V_2$
i.e. two copies of the non central elements of $G$. To describe a simplicial complex, it is enough to describe its maximal faces.

The maximal faces of $X(G)$ are of the form $[(x,1), (y,1), (xy,2)]$ where $x$ and $y$ do not commute - we orient these $2$-simplices in the order indicated also.
Note that the three vertices $x, y, xy$ of this face pairwise do not commute in $G$.

Notice every edge in the graph $G_1$ lies in two distinct faces of $X(G)$ i.e., \\  $[(x,1),(y,1), (xy,2)]$ and $[(y,1), (x,1), (yx,2)]$. Also note that the orientations on these two faces cancel along
this edge.
Furthermore if $\alpha$ is a noncentral element, there must be an element $x$ which does not commute with it by definition. Then $y=x^{-1}\alpha$ does not
commute with $x$ and $[(x,1), (y,1), (\alpha,2)]$ is a face of $X(G)$. Thus every type 2 vertex does lie in a face also. Notice from this argument
that for any $\alpha, x$ which do not commute, the edge $[(x,1), (\alpha,2)]$ is part of the complex. Furthermore such an edge lies again in exactly
two faces $[(x,1), (x^{-1}\alpha,1), (\alpha,2)]$ and $[(\alpha x^{-1},1), (x,1), (\alpha,2)]$ whose
orientations cancel along that edge.

\begin{figure}[htp]
\centering
\caption{The face $[(x,1), (y,1), (xy,2)]$ and two adjacent faces}
\includegraphics[width=.4 \textwidth]{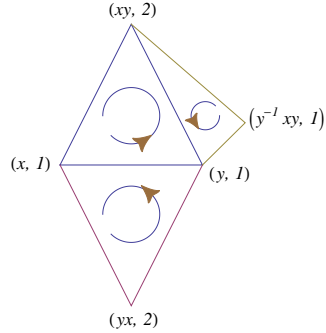}
\label{fig: 2face}
\end{figure}

To summarize, we have defined a $2$-dimensional simplicial complex $X(G)$ whose vertex set $V$ is the disjoint union of $V_1$ and $V_2$ where
$V_1=\{ (x,1) | x \in G-Z(G) \}$ and $V_2=\{ (x,2) | x \in G-Z(G) \}$ and whose edges are of the form $[(x,1), (y,1)]$ and $[(x,1), (y,2)]$ where $x$ and $y$ do not commute.
Finally the (oriented) faces in the complex are of the form $[(x,1),(y,1),(xy,2)]$ and every edge lies in exactly two faces whose orientations cancel along that edge. Note in particular every face
of $X(G)$ shares an edge with the graph $G_1$ considered earlier.

Finally note that $X(G)$ is path-connected as any point in $X(G)$ lies in a face and any point in a face can be connected by a straight line path to a point on
the graph $G_1$. As we have seen that the graph $G_1$ is path-connected, it follows that any two points in $X(G)$ are connected by a path.

We summarize the properties of $X(G)$ here that will be relevant to the rest of the analysis of this complex in this section. We postpone the proof of the functoriality of this construction till later.

\begin{pro}
\label{basic properties}
Let $G$ be a finite nonabelian group and $Z(G)$ be its center. The complex $X(G)$ is a
compact, 2-dimensional simplicial complex such that: \\
(1) Every edge lies in exactly two faces. \\
(2) The faces can be oriented so that along any edge, the two orientations of the adjacent
faces cancel. \\
(3) It is path-connected, thus in particular every vertex lies in an edge.
\end{pro}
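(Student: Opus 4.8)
The plan is to verify each clause directly from the definition of $X(G)$, the only substantive input being the noncommutativity built into the edges and faces. First I would dispose of the structural claims. The vertex set $V_1 \sqcup V_2$ is finite, being two copies of $G - Z(G)$, so $X(G)$ is a finite complex and hence compact; each maximal face $[(x,1),(y,1),(xy,2)]$ has three pairwise noncommuting, hence pairwise distinct, vertices, so it is a genuine $2$-simplex and $\dime X(G)=2$. A quick check that distinct maximal faces have distinct vertex sets (if $\{x,y\}=\{a,b\}$ and $xy=ab$ then $(a,b)=(x,y)$, since $(a,b)=(y,x)$ would give $yx=xy$) confirms that $X(G)$ is an honest simplicial complex. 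I would also record two bookkeeping facts used repeatedly below: every face has exactly two type-$1$ vertices and one type-$2$ vertex, so every edge is either $[(x,1),(y,1)]$ or $[(x,1),(y,2)]$ with $x$ and $y$ noncommuting; and every such edge lies in at least one face (for $[(x,1),(y,1)]$ take $[(x,1),(y,1),(xy,2)]$, while the type-$1$--type-$2$ edges occur, by the description of the complex via maximal faces, only as edges of faces).

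For (1) I would run the two-case analysis indicated by this dichotomy. A face containing $[(x,1),(y,1)]$ must use $(x,1),(y,1)$ as its two type-$1$ vertices, so it is $[(x,1),(y,1),(xy,2)]$ or $[(y,1),(x,1),(yx,2)]$, and these are distinct precisely because $xy\neq yx$. A face containing $[(x,1),(y,2)]$ must use $(y,2)$ as its type-$2$ vertex and $(x,1)$ as one of its two type-$1$ vertices, so it is $[(x,1),(x^{-1}y,1),(y,2)]$ or $[(yx^{-1},1),(x,1),(y,2)]$; here one checks that the relevant new vertices genuinely fail to commute with $x$ (if $x$ commuted with $x^{-1}y$ it would commute with $y=x(x^{-1}y)$), so both are faces of $X(G)$, and that they are distinct because $x^{-1}y = yx^{-1}$ would force $xy=yx$. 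Either way the edge lies in exactly two faces. Confirming that this enumeration of incident faces is complete, that both candidates really belong to the complex, and that they are distinct is the one place where nonabelianity is genuinely used, so I regard it as the (modest) crux of the argument.

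For (2) I would fix the orientation of each face by its displayed vertex ordering and compute the contribution of a shared edge to the boundary from each side via $\partial[v_0,v_1,v_2]=[v_1,v_2]-[v_0,v_2]+[v_0,v_1]$. For a type-$1$--type-$1$ edge $[(x,1),(y,1)]$ this gives $+[(x,1),(y,1)]$ from $[(x,1),(y,1),(xy,2)]$ and $+[(y,1),(x,1)]=-[(x,1),(y,1)]$ from $[(y,1),(x,1),(yx,2)]$; for a type-$1$--type-$2$ edge $[(x,1),(y,2)]$ it gives $-[(x,1),(y,2)]$ from $[(x,1),(x^{-1}y,1),(y,2)]$ and $+[(x,1),(y,2)]$ from $[(yx^{-1},1),(x,1),(y,2)]$. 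In both cases the two contributions cancel, which is exactly the asserted compatibility of orientations.

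Finally, (3) is almost immediate from the discussion preceding the statement: every point of $X(G)$ lies in a closed maximal face, every point of such a face is joined by a straight-line segment to the type-$1$--type-$1$ edge of that face, which lies in the subgraph $G_1$, and $G_1$ is path-connected by the preceding Lemma; so $X(G)$ is path-connected. In particular no vertex is isolated, hence every vertex lies in an edge — or, directly, a noncentral $x$ admits $z$ with $xz\neq zx$, and then $[(x,1),(z,1)]$ is an edge at $(x,1)$ while $[(z,1),(x,2)]$ is an edge at $(x,2)$.
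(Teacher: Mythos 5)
Your proof is correct and follows essentially the same route as the paper, which establishes this proposition in the discussion preceding its statement: enumerate the two faces $[(x,1),(y,1),(xy,2)]$, $[(y,1),(x,1),(yx,2)]$ (resp. $[(x,1),(x^{-1}\alpha,1),(\alpha,2)]$, $[(\alpha x^{-1},1),(x,1),(\alpha,2)]$) incident to each edge, observe the orientations cancel, and deduce path-connectedness from the connectivity of $G_1$. Your extra checks (distinctness of the two incident faces, that the candidate faces are genuine, and the explicit boundary computation) only make explicit what the paper leaves implicit.
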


We will now show that any simplicial complex that satisfies the criteria of
Proposition~\ref{basic properties} like $X(G)$ does is a union of finitely many compact, connected, oriented,
pseudomanifolds who pairwise intersect in a finite collection of vertices.

\subsection{Pseudomanifolds}

Recall (see \cite{Mu}) that an $n$-dimensional pseudomanifold is a $n$-dimensional simplicial complex $X$ which
satisfies: \\
(1) $X$ is the union of its $n$-simplices. \\
(2) Every $n-1$ simplex in $X$ lies in exactly two $n$-simplices. \\
(3) For any two $n$-simplices $\sigma_0, \sigma_k$ in $X$ there is a sequence of $n$-simplices
$\sigma_1, \dots, \sigma_{k-1}$ such that $\sigma_i$ and $\sigma_{i+1}$ share a common
$(n-1)$-face for $0 \leq i \leq k-1$. \\
Furthermore a pseudomanifold is orientable if one can orient all the $n$-simplices in such a way
that for any $n-1$ simplex $\tau$ in $X$, the two orientations from the two adjacent $n$-simplices cancel along $\tau$. \\
Note that conditions (1) and (3) imply that all pseudomanifolds are path-connected. \\

It is well known that every path connected, triangulated topological $n$-manifold is a $n$-dimensional pseudomanifold. However pseudomanifolds also allow certain types of singularites which we will talk about below.

\begin{pro} Let $X$ be a simplicial complex satisfying the conditions of Proposition~\ref{basic properties} such as $X(G)$. Define an equivalence relation on the $2$-faces of $X$ by
saying $\sigma_0$ and $\sigma_k$ are equivalent if there is a sequence of $2$-faces
$\sigma_1, \dots, \sigma_{k-1}$ such that $\sigma_i$ and $\sigma_{i+1}$ share a common
edge for $0 \leq i \leq k-1$.

The union of faces in an equivalence class then gives an oriented compact $2$-dimensional pseudomanifold. Furthermore $X$ has finitely many equivalence classes and so is a union of
finitely many oriented, compact, pseudomanifolds any two of which can meet only in a finite
set of vertices.
\end{pro}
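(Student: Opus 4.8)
The plan is to fix an arbitrary equivalence class $C$ of $2$-faces of $X$, let $X_C$ denote the subcomplex of $X$ spanned by the faces in $C$ together with all of their edges and vertices, and check one by one that $X_C$ is a compact oriented $2$-pseudomanifold; then observe that these subcomplexes exhaust $X$ and that distinct ones overlap in only finitely many vertices. Before starting I would record the one global fact that does most of the work: by Proposition~\ref{basic properties}, $X$ is compact, and a simplicial complex (with the weak topology) is compact precisely when it is finite, so $X$ has only finitely many simplices. This at once gives the ``finitely many equivalence classes'' assertion and guarantees that every subcomplex appearing below is finite, hence compact. It also shows $X=\bigcup_C X_C$, since by parts (1) and (3) of Proposition~\ref{basic properties} every edge lies in a face and every vertex lies in an edge, so every simplex of $X$ is a face of some $2$-simplex and thus lies in $X_C$ for the class $C$ of that simplex.

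Next I would verify the three pseudomanifold axioms for $X_C$. Axiom (1), that $X_C$ is the union of its $2$-simplices, holds by construction, and axiom (3), the edge-chain condition, is exactly the restriction of the defining equivalence relation to $C$. The step I expect to be the crux is axiom (2): every edge of $X_C$ must lie in exactly two $2$-simplices \emph{of $X_C$}. The argument I have in mind is that an edge $e$ of $X_C$ is an edge of some $\sigma\in C$; by part (1) of Proposition~\ref{basic properties} it lies in exactly two faces $\sigma,\sigma'$ of $X$; since $\sigma'$ shares the edge $e$ with $\sigma\in C$ it is equivalent to $\sigma$, so $\sigma'\in C$, and hence both faces of $X$ through $e$ already belong to $X_C$. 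Orientability then comes for free: restrict the coherent orientation supplied by part (2) of Proposition~\ref{basic properties} to the faces of $C$; along each edge of $X_C$ the two adjacent faces are the same two faces as in $X$, so the two induced orientations still cancel. That $X_C$ is path-connected is automatic from axioms (1) and (3), as remarked in the excerpt, and compactness follows from finiteness; since $X_C$ contains $2$-faces it is genuinely $2$-dimensional.

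It remains to see that two distinct equivalence classes $C\neq C'$ yield subcomplexes meeting in only a finite set of vertices. I would argue that $X_C$ and $X_{C'}$ share no $2$-faces, since $C$ and $C'$ are disjoint, and share no edges: if an edge $e$ lay in a face $\sigma\in C$ and a face $\sigma'\in C'$, then $e$ being contained in exactly two faces of $X$ by Proposition~\ref{basic properties}(1) forces those two faces to be $\sigma$ and $\sigma'$, whence $\sigma$ and $\sigma'$ are edge-adjacent and $C=C'$, a contradiction. Therefore the intersection subcomplex $X_C\cap X_{C'}$ is $0$-dimensional, i.e.\ a set of vertices, and it is finite because $X$ is. Putting this together with the previous paragraphs gives the full statement. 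The only genuinely substantive point in the argument is the verification of axiom (2) for $X_C$; everything else is bookkeeping resting on the ``exactly two faces per edge'' property of Proposition~\ref{basic properties} and on the finiteness of $X$.
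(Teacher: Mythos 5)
Your proposal is correct and follows essentially the same route as the paper's own proof: the ``exactly two faces per edge'' property forces both faces through any edge of a class to lie in that class (giving the pseudomanifold axioms and coherent orientation), finiteness of $X$ gives compactness and finitely many classes, and the same two-faces-per-edge argument shows distinct classes share no edges, hence meet only in vertices. Your write-up simply makes explicit the verifications the paper declares ``clear,'' so there is nothing to correct.
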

\begin{proof}
As every vertex of $X$ lies in an edge and every edge lies in a face ($2$-simplex), it is clear that $X$ is the union of its $2$-simplices. The relation described on these faces is easily checked to be an equivalence relation and the equivalence classes partition the $2$-simplices of $X$. As every edge of $X$ lies in exactly two faces and faces can be oriented so that orientations cancel along the edges in common, it is clear that the union of the $2$-simplices in one of the equivalence classes forms a $2$-dimensional oriented pseudomanifold. It consists of finitely many simplices and is compact as $X$ is. Thus $X$ is a union of finitely many compact, oriented, $2$-dimensional pseudomanifolds as claimed. If $Y$ and $Z$ are two of these, the intersection $Y \cap Z$ is a
subcomplex. If this complex contained any edge, it would bound two faces, one of which has to be in $Y$ and one which has to be in $Z$, and these faces would be equivalent contradicting
the construction of the pseudomanifolds $Y$ and $Z$ as unions from distinct equivalence classes of faces. Thus the intersection contains no edges and hence no faces and so consists of at most a finite collection of vertices as claimed.
\end{proof}

Note from the proof above, it is clear that the pseudomanifolds whose union is $X(G)$ can be algorithmically determined and hence are unique. We will call them the pseudomanifold components of $X(G)$.

The following three examples are examples of simplicial complexes satisfying the hypothesis of Proposition~\ref{basic properties} and give a good indication of the type of spaces that can be $X(G)$:\\
(1) A compact, connected, oriented, triangulated $2$-manifold. Here there is a single pseudomanifold component which happens to be a
manifold. We will call these examples, with slight abuse of notation, ``Riemann surfaces" - in \cite{HP}, we show that our constructions inherit unique complex 
structures compatible with functoriality so this abuse is justifiable.\\
(2) Any compact, oriented $2$-dimensional pseudomanifold. For example a sphere with 3 points identified to a single point.
This can be triangulated so that it is a compact, oriented pseudomanifold and is a typical example of such.
Every compact, oriented 2-dimensional pseudomanifold can be obtained from a triangulated
Riemann surface by a finite number of vertex identifications. We will see this soon as a byproduct of other analysis. Thus basically in dimension $2$ an oriented compact pseudomanifold is nearly  an oriented compact connected manifold aside from some point self-intersections. \\
(3) Take a torus and squash three meridian circles to three distinct points. This space can be triangulated so that it satisfies the hypothesis of Proposition~\ref{basic properties}. It has
three pseudomanifold components which are spheres and any two of these pseudomanifold components intersect in a single vertex but no point lies in all three pseudomanifold components. \\

Let $X$ be any simplicial complex satisfying the conditions of Proposition~\ref{basic properties}.
Notice that if $x$ is a point in the interior of a face (2-simplex) then it is clear it has an open neighborhood homeomorphic to the unit open ball of $\mathbb{R}^2$. If $x$ is a point on an
edge other than its two end points, then as every edge lies in exactly two faces, it is again clear that $x$ has an open neighborhood homeomorphic to the unit open ball of $\mathbb{R}^2$.

Thus the only points of $X$ that might not have open neighborhoods homeomorphic to open
subsets of $\mathbb{R}^2$ are the vertices. In particular $X-\{ \text{Vertex set of } X\}$ is a
$2$-dimensional oriented topological manifold whose connected components, we will see later,  are punctured
Riemann surfaces. The next proposition shows that the connected components of
$X - \{\text{ vertex set of } X \}$ are in natural bijective correspondence with the pseudomanifold components of $X$.

\begin{pro}
\label{pro: components}
Let $X$ be a simplicial complex satisfying the conditions of Proposition~\ref{basic properties} and let $Y= X - \{ \text{Vertex set of } X \}$ be given the subspace topology. Then $Y$ is a $2$-dimensional oriented topological manifold and the connected components of $Y$ are in natural bijective correspondence with the pseudomanifold components of $X$. More precisely $y_1, y_2 \in Y$ lie in the same component of $Y$ if and only if they lie in the same pseudomanifold component of $X$.
\end{pro}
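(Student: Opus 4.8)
The plan is to prove the two directions of the claimed bijection separately, after first establishing that $Y$ is a $2$-manifold (which is essentially already done in the paragraph preceding the statement). So I would begin by recording that observation: every point of $X$ not in the vertex set lies either in the interior of a $2$-simplex or in the relative interior of an edge, and in both cases the hypotheses of Proposition~\ref{basic properties} (every edge in exactly two faces, with cancelling orientations) give an open neighborhood homeomorphic to an open disk, with consistent orientation; hence $Y$ is an oriented topological $2$-manifold. The real content is the correspondence of connected components with pseudomanifold components of $X$.

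For the easy direction, I would show that if $y_1,y_2\in Y$ lie in the same connected component of $Y$, then they lie in the same pseudomanifold component of $X$. Since each point of $Y$ lies in some $2$-simplex, it suffices to show: if two open $2$-simplices $\mathring\sigma$ and $\mathring\tau$ (minus vertices) can be joined by a path in $Y$, then $\sigma$ and $\tau$ are face-equivalent in the sense of Proposition~\ref{pro: components}'s ambient proposition. The clean way is to argue that the union of the relative interiors of all faces and edges belonging to a single pseudomanifold component is open and closed in $Y$: it is open because each point has a disk neighborhood contained in that component (a disk around an edge point meets only the two faces sharing that edge, both in the same component), and the complement is a union of such sets, hence also open. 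So $Y$ decomposes as a disjoint union of open sets indexed by the pseudomanifold components, and a connected subset of $Y$ is contained in one of them.

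For the reverse direction, I would show that each pseudomanifold component $P$ gives rise to a connected subset $P\cap Y$ of $Y$. Here I would use condition (3) in the definition of pseudomanifold: any two $2$-simplices of $P$ are linked by a chain of $2$-simplices in which consecutive ones share an edge. Given $y_1,y_2\in P\cap Y$, choose faces $\sigma_0\ni y_1$ and $\sigma_k\ni y_2$ and such a chain $\sigma_0,\dots,\sigma_k$; then construct an explicit path in $Y$ by going from $y_1$ into the relative interior of the shared edge of $\sigma_0$ and $\sigma_1$ (staying away from that edge's endpoints), across $\sigma_1$, into the interior of the next shared edge, and so on, finally reaching $y_2$; each segment stays in $Y$ because it avoids vertices by construction. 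This shows $P\cap Y$ is path-connected, hence connected. Combining the two directions, the map sending a connected component of $Y$ to the pseudomanifold component of $X$ containing it is a well-defined bijection, and the ``more precisely'' statement is exactly what the two directions assert.

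I expect the main obstacle to be purely bookkeeping rather than conceptual: one must be careful, when building paths across a chain of faces, that the path can always be routed through the \emph{relative interior} of the common edges and through the \emph{interior} of each face, i.e.\ that one never needs to pass through a vertex even when consecutive faces in the chain happen to share two edges or when the chain revisits simplices. A small subtlety worth noting explicitly is why distinct pseudomanifold components cannot share a $2$-simplex or an edge (so that the ``indexing'' of the open cover of $Y$ is genuinely by components) — but this is already furnished by the preceding proposition, whose proof shows two components meet only in finitely many vertices, all of which have been deleted in passing to $Y$.
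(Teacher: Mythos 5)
Your proposal is correct and follows essentially the same route as the paper: decompose $Y$ as a disjoint union of clopen pieces indexed by the pseudomanifold components (the paper gets openness from closedness of each $X_i$ plus finiteness, you from local disk neighborhoods, which is an immaterial difference), and for the converse build a path along a chain of edge-adjacent faces that avoids the deleted vertices. No gaps to report.
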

\begin{proof}
It is clear from the preceeding paragraph that $Y$ is an oriented $2$-dimensional topological manifold. Fix $y_1, y_2$ in $Y$. As $X$ was the union of its pseudomanifold components $X_1, \dots X_m$
which pairwise intersected in at most finitely many vertex points, we see that $Y$ is the disjoint union of $X_1-V, \dots, X_m-V$ where $V$ is the vertex set of $X$. As each $X_i$ is closed in $X$,
each $X_i-V$ is closed in $Y=X-V$ and hence also open as there are a finite number of pseudomanifold components. Thus if $y_1, y_2$ lie in distinct pseudomanifold components of $X$ they must lie in distinct connected components of $Y$. Conversely, suppose that $y_1, y_2$ lie in the same pseudomanifold component $X_j$ of $X$. Then there must be a face $\sigma_0$
and a face $\sigma_k$ both in $X_j$ joined by a sequence of faces $\sigma_1,\dots, \sigma_{k-1}$ in $X_j$ such that $y_1 \in \sigma_0$ and $y_2 \in \sigma_k$ and such that
$\sigma_i, \sigma_{i+1}$ share a common edge for $0 \leq i \leq k-1$. As $y_1, y_2$ are not
vertices, it is clear we can construct a path along this sequence of faces which avoids vertices and joins $y_1, y_2$. Thus $y_1, y_2$ lie in the same path component and hence component of $Y$. This concludes the proof.
\end{proof}

\subsection{Closed stars of vertices}

Throughout this section, we will be concerned with the structure of the closed star of vertices
in a simplicial complex $X$ which satisfies the conditions of Proposition~\ref{basic properties}.
Throughout this section $X$ is such a complex.

\begin{defn}[$m$-stars]
Let $m \geq 3$. Let $V=\{0\} \cup \{ e^{2\pi ik/m} | 0 \leq k < m, k \text{  an integer } \}$ be the
vertex set containing zero and the $m$th roots of unity viewed within the complex plane. Consider the collection of edges $E$ obtained by joining $0$ to the $m$, $m$th roots of unity. This simplicial graph $G_m=(V,E)$ will be called the $m$-star graph. Note it is star-convex so as a space, it is
contractible to its middle point $0$.
\end{defn}

\begin{defn}[$m$-disks]
Let $m \geq 3$. Let $V$ be the vertex set containing $0$ and the $m$th roots of unity viewed within the complex plane. Let $D_m$ be the convex hull of $V$ (i.e, a $m$-gon) triangulated
using its $m$ boundary edges and the $m$ edges joining $0$ to the $m$th roots of unity as edge set.
$D_m$ has $m$ faces ($2$-simplices) consisting of the $m$ triangles cut out by the edges mentioned above. $D_m$ is homeomorphic to the standard closed unit disk in the complex plane but triangulated by this specific triangulation. We will call such a triangulated disk, an $m$-disk or a disk of type $m$ in this paper. $0$ is called the center of this $m$-disk. The boundary of a $m$-disk is a triangulated circle which we will call an $m$-circle. \end{defn}

\begin{defn}[$(m_1,m_2,\dots,m_k)$-disk bouquet]
Let $m_1, \dots, m_k$ be integers $\geq 3$. A simplicial complex $T$ is called a
$(m_1,m_2,\dots, m_k)$-disk bouquet if it is simplicially isomorphic to the simplicial complex obtained by taking $k$ disjoint disks of types $m_1, \dots, m_k$ respectively and identifying
their centers to a common center vertex. The individual disks in a disk bouquet are called the sheets of the bouquet.
\end{defn}

We have seen neighborhoods of nonvertex points of $X$ look like open disks in $\mathbb{R}^2$, we are now ready to describe neighborhoods of vertices in $X$, including their simplicial structure.

\begin{pro}[Closed stars of vertices]
\label{pro: closedstar}
Let $X$ be a simplicial complex satisfying the conditions of Proposition~\ref{basic properties}
and let $v$ be a vertex of $X$. Then there exist integers $k \geq 1, m_1, \dots, m_k \geq 3$
such that the closed star of $v, \bar{St}(v)$ is a $(m_1,m_2, \dots, m_k)$-disk bouquet.
The link of $v, Lk(v)$ is a disjoint union of $k$ circles, of simplicial types $m_i$, $1 \leq i \leq k$.

\end{pro}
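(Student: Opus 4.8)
The plan is to analyze the link $Lk(v)$ first and then recover the closed star $\bar{St}(v)$ as the simplicial cone on it. Since $X$ is $2$-dimensional, $Lk(v)$ is a graph: its vertices are the edges of $X$ of the form $[v,w]$, and its edges are the faces of $X$ of the form $[v,a,b]$, the edge of $Lk(v)$ coming from $[v,a,b]$ joining the vertices $[v,a]$ and $[v,b]$. First I would check that $Lk(v)$ is a \emph{simple} graph: there are no loops because $X$ is simplicial, and no two distinct edges of $Lk(v)$ have the same pair of endpoints because two faces of $X$ sharing the two edges $[v,a]$ and $[v,b]$ would both have vertex set $\{v,a,b\}$, hence coincide. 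Next, condition (1) of Proposition~\ref{basic properties} says each edge $[v,w]$ of $X$ lies in exactly two faces, so every vertex of $Lk(v)$ has degree exactly $2$ (in particular $Lk(v)$ has no isolated vertices); and condition (3) guarantees $v$ lies in some edge, so $Lk(v)$ is nonempty.

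A finite simple graph in which every vertex has degree $2$ is a disjoint union of finitely many cycles, each of length $\geq 3$. Hence $Lk(v) = C_1 \sqcup \dots \sqcup C_k$ with $k \geq 1$, where each $C_i$ is a cycle on some number $m_i \geq 3$ of vertices; choosing a cyclic labelling of its vertices and edges exhibits $C_i$ as an $m_i$-circle in the sense of our definition. This already proves the statement about $Lk(v)$.

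It remains to identify $\bar{St}(v)$. Recall that in a simplicial complex the closed star of a vertex is the subcomplex $v * Lk(v)$; concretely the simplices of $\bar{St}(v)$ are exactly $v$ itself, the edges $[v,w]$ with $w$ a vertex of $Lk(v)$, and the faces $[v,a,b]$ with $[a,b]$ an edge of $Lk(v)$. Since the join with $v$ distributes over the disjoint union, $v * Lk(v) = (v * C_1) \cup \dots \cup (v * C_k)$, and for $i \neq j$ one checks $(v * C_i) \cap (v * C_j) = \{v\}$: a simplex of $v*C_i$ either lies in $C_i$ (and $C_i \cap C_j = \emptyset$) or has the form $v * \tau$ with $\tau$ a possibly empty simplex of $C_i$, so it can coincide with a simplex $v * \tau'$ of $v*C_j$ only when $\tau = \tau' = \emptyset$. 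Finally, the cone $v * C_i$ on an $m_i$-circle is precisely an $m_i$-disk with center $v$: its $2$-simplices are the $m_i$ triangles $[v,w_j,w_{j+1}]$ as $[w_j,w_{j+1}]$ runs over the edges of $C_i$, matching the triangulation of $D_{m_i}$ up to relabelling, with boundary the $m_i$-circle $C_i$. Therefore $\bar{St}(v)$ is the union of $k$ disks of types $m_1,\dots,m_k$ glued along the single common center vertex $v$, i.e.\ a $(m_1,\dots,m_k)$-disk bouquet, with link data as claimed.

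The only points requiring a little care are the two pieces of simplicial bookkeeping: that $Lk(v)$ is simple (so that its $2$-regular components are genuine polygons with at least three sides, which is exactly where $m_i \geq 3$ comes from), and that the abstract cone $v * C_i$ is simplicially isomorphic to the triangulated disk $D_{m_i}$ of the definition. Neither is difficult, but these are the steps that actually use the hypotheses that $X$ is simplicial and that every edge lies in exactly two faces; beyond them I do not expect any substantial obstacle.
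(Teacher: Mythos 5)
Your proof is correct, but it is organized differently from the paper's. The paper argues star-first: it starts from a face $\sigma_1$ containing $v$, uses condition (1) (each edge lies in exactly two faces) to pick the unique next face across an edge through $v$, and walks face-by-face until finiteness forces the circuit to close up into an $m_1$-disk, then repeats with unused faces and finally checks by hand that the resulting disks can only meet in $v$; the statement about $Lk(v)$ is read off at the end. You instead argue link-first: the link is a finite simple graph (simplicity ruling out loops and doubled edges), condition (1) makes it $2$-regular, so it is a disjoint union of $k\geq 1$ cycles of length $m_i\geq 3$, and then $\bar{St}(v)=v*Lk(v)$ together with ``cone on an $m_i$-circle $=$ $m_i$-disk'' and the observation that the cones on distinct cycles meet only in $v$ gives the bouquet structure. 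The two arguments use the same combinatorial input, but yours outsources the closing-up and disjointness steps to standard facts (the classification of $2$-regular graphs and the join identity for closed stars), which makes it a bit cleaner and delivers the link statement up front with the star as a corollary; the paper's explicit face-walk is more constructive and has the side benefit that the ``walk around a sheet'' picture it sets up is exactly what is reused later in the group-theoretic analysis of sheets via $\alpha$-conjugacy. One small wording point: when you say the simplices of $\bar{St}(v)$ are exactly $v$, the edges $[v,w]$ and the faces $[v,a,b]$, you omit the simplices of $Lk(v)$ itself, which also lie in the closed star; your subsequent treatment of $v*C_i$ accounts for them, so this is only a slip of phrasing, not a gap.
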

\begin{proof}
The vertex $v$ is contained in a face $\sigma_1$. Let $E$ be the edge opposite $v$ in this
face. Note that by the structure of simplicial complexes, no other face can contain both $v$ and $E$ or it would have to be equal to $\sigma_1$. Now take an edge of $\sigma_1$ that contains $v$.
As every edge lies in exactly two faces, there is a unique face $\sigma_2$ that shares this
edge with $\sigma_1$ and is not $\sigma_1$. Note $v$ is contained in this face also. Repeating the argument we can find a sequence of faces, each containing $v$ of the form
$\sigma_1, \dots, \sigma_k$ where $\sigma_i$ and $\sigma_{i+1}$ share an edge for
$1 \leq i \leq k-1$. Eventually by finiteness and as every edge lies in exactly two faces, we must have $\sigma_k$ share an edge with
the first face $\sigma_1$ for some $k=m_1$. The union of faces $\sigma_1, \dots, \sigma_{m_1}$ hence forms a $m_1$-disk centered at $v$. If this exhausts all faces in $X$ we are done. If not pick another unused face containing $v$ and proceed to find a $m_2$ disk centered at $v$ using a similar process. Proceed in this way till one has found $k$ disks all centered at $v$ and there are no unused faces containing $v$. (This must occur eventually as there are finite number of faces in $X$).

Note that as no two faces which have $v$ as a vertex, can intersect in an edge opposite $v$, and all the edges adjacent to $v$ can only be in two faces which are part of the same disk, it is clear that the disks obtained are disjoint except for the common vertex $v$. Thus the union of all faces containing $v$, i.e. the closed star of $v$, forms a $(m_1,m_2, \dots,m_k)$-disk bouquet. $m_i \geq 3$
for all $i$ by general structure conditions of simplicial complexes.

The statement on links follows immediately from this so we are done.

\end{proof}

\begin{cor}
Let $X$ be a simplicial complex satisfying the conditions of Proposition~\ref{basic properties}
and let $Y=X-\{ \text{Vertex set of } X \}$. Then the components of $Y$ are punctured Riemann surfaces. We may fill in each distinct puncture of $Y$ with a distinct point to obtain a
disjoint union of finitely many Riemann surfaces which are in bijective correspondence with the pseudomanifold components of $X$.
\end{cor}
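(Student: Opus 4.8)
The plan is to realize each connected component of $Y$ as the complement of a finite set of points inside a compact triangulated oriented surface, obtained by ``normalizing'' the corresponding pseudomanifold component of $X$; the required bijection is then exactly the one already supplied by Proposition~\ref{pro: components}.

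First I would fix notation and make two preliminary remarks. Let $X_1,\dots,X_m$ be the pseudomanifold components of $X$, let $V$ be the vertex set of $X$, and put $V_i = X_i\cap V$, a finite set. By Proposition~\ref{pro: components}, $Y=X-V$ is an oriented $2$-manifold whose connected components are precisely the sets $Y_i:=X_i-V_i$, and $i\mapsto Y_i$ is the asserted bijection with the pseudomanifold components. Moreover each $X_i$ is itself a simplicial complex satisfying the hypotheses of Proposition~\ref{basic properties}: an edge of $X_i$ lies in exactly two faces of $X$, which are equivalent to one another and hence both lie in $X_i$; the orientations still cancel along it; and $X_i$ is path connected since it is a pseudomanifold. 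Hence Proposition~\ref{pro: closedstar} applies inside each $X_i$, so for every $v\in V_i$ the closed star $\bar{St}_{X_i}(v)$ is an $(m_1,\dots,m_{k_v})$-disk bouquet for some $k_v\geq 1$, and in particular $\bar{St}_{X_i}(v)-\{v\}$ is a disjoint union of $k_v$ punctured disks, one per sheet.

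Next I would perform the normalization. For each $v\in V_i$ replace $v$ by $k_v$ new vertices $v^{(1)},\dots,v^{(k_v)}$, one attached to each sheet of $\bar{St}_{X_i}(v)$. Since every $2$-face and every edge of $X_i$ lies, at each of its vertices $w\in V_i$, in a unique sheet of $\bar{St}_{X_i}(w)$, this rule lifts each simplex of $X_i$ to a unique simplex and yields a finite simplicial complex $\tilde X_i$ together with a simplicial surjection $\pi_i\colon\tilde X_i\to X_i$ that is a bijection on $1$-simplices and on $2$-simplices and has fibre $\pi_i^{-1}(v)$ of size $k_v$ over each $v\in V_i$. In $\tilde X_i$ every vertex has closed star a single disk, so together with the manifold structure at non-vertex points noted before Proposition~\ref{pro: components} this shows $\tilde X_i$ is a triangulated $2$-manifold; it inherits an orientation from $X_i$, is compact because it is a finite complex, and is connected because $\pi_i$ identifies the edge-adjacency graph of the $2$-faces of $\tilde X_i$ with that of $X_i$, which is connected since $X_i$ is a pseudomanifold. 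Thus $\tilde X_i$ is a Riemann surface in the sense of this paper. Finally $\pi_i$ restricts to a homeomorphism $\tilde X_i-\tilde V_i\cong X_i-V_i=Y_i$, where $\tilde V_i$ is the finite vertex set of $\tilde X_i$, because $\pi_i$ is a homeomorphism on each closed simplex and is bijective on the open cells that avoid all vertices.

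To conclude I would transport this back: $Y_i\cong\tilde X_i-\tilde V_i$ is the compact oriented surface $\tilde X_i$ with the finite set $\tilde V_i$ deleted, and near each deleted point lying over $v\in V_i$ it looks like a single punctured disk; so $Y_i$ is a punctured Riemann surface, its distinct punctures are exactly the points of $\tilde V_i$, and filling each in with a distinct point recovers $\tilde X_i$. Doing this for all of the finitely many components $Y_1,\dots,Y_m$ exhibits the filled-in space as $\tilde X_1\sqcup\cdots\sqcup\tilde X_m$, a disjoint union of finitely many Riemann surfaces, and $X_i\mapsto\tilde X_i$ is the required bijection. I expect the main obstacle to lie in the normalization step — verifying that splitting the singular vertices really produces a legitimate simplicial complex $\tilde X_i$ (no accidental identifications, and simplices meeting in faces) and that $\pi_i$ is a homeomorphism away from the vertices. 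Both hinge on the content of Proposition~\ref{pro: closedstar}, that distinct sheets of a disk bouquet meet only at the centre, so the difficulty is bookkeeping rather than anything deep; compactness, orientability and the counting bijection are then formal.
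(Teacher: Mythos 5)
Your argument is correct, and it rests on the same two pillars as the paper's own proof --- Proposition~\ref{pro: components} for the bijection between components of $Y$ and the pseudomanifold components, and Proposition~\ref{pro: closedstar} for the local structure at vertices --- but you execute the final step differently. The paper stays topological: it observes that each puncture of $Y$ has a neighborhood homeomorphic to a punctured disk (one per sheet of the relevant disk bouquet) and simply fills each puncture with a point, asserting that the resulting components are connected, oriented, finitely triangulated, hence compact, surfaces. You instead build the filled-in object globally and simplicially: you first check that each pseudomanifold component $X_i$ itself satisfies the hypotheses of Proposition~\ref{basic properties}, then split every vertex $v$ of $X_i$ into one vertex per sheet of $\bar{St}_{X_i}(v)$ to get a finite complex $\tilde X_i$ with a simplicial projection $\pi_i$ that is bijective on edges and faces and a homeomorphism away from the vertices, and you identify $Y_i$ with $\tilde X_i$ minus its finitely many vertices. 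This costs a little more bookkeeping (that the vertex-splitting yields a genuine simplicial complex with no accidental identifications, and that the restricted $\pi_i$ is a homeomorphism), all of which does reduce to the disk-bouquet structure as you say, but it buys something the paper's short proof does not literally exhibit: an explicit triangulation of each filled-in Riemann surface together with a simplicial map to $X$, which is precisely the kind of structure the paper invokes later when it triangulates $Y(G)$ compatibly with functoriality. Both routes produce the same bijection with the pseudomanifold components, namely the one already furnished by Proposition~\ref{pro: components}.
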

\begin{proof}
We already know that $Y$ is an oriented $2$-dimensional manifold with finitely many connected components in bijective correspondence with the pseudomanifold components of $X$.
Note from Proposition~\ref{pro: closedstar}, we see that there is open neighborhood of each puncture homeomorphic to a punctured disk. From this it is easy to see that if the punctures are filled in with distinct points, we will obtain components which are connected, oriented
$2$-manifolds with finite triangulations (and hence compact). Adding these distinct points do not change connected components so we result in a finite disjoint union of Riemann surfaces, which are in bijective correspondence with the pseudomanifold components of $X$.
\end{proof}

\begin{figure}[htp]
\centering
\caption{A representative simplicial complex $X(G)$.}
\includegraphics[width=.8 \textwidth]{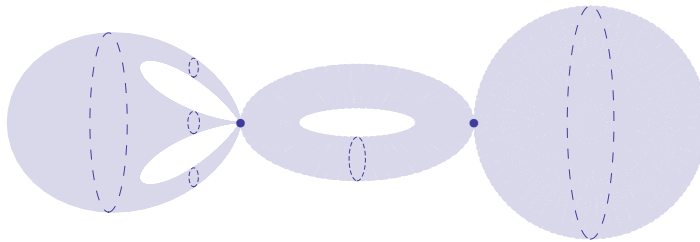}
\label{fig: disingularizedpre}
\end{figure}

\begin{figure}[htp]
\centering
\caption{The resulting desingularization $Y(G)$ of the complex $X(G)$ in figure \ref{fig: disingularizedpre}.}
\includegraphics[width=1 \textwidth]{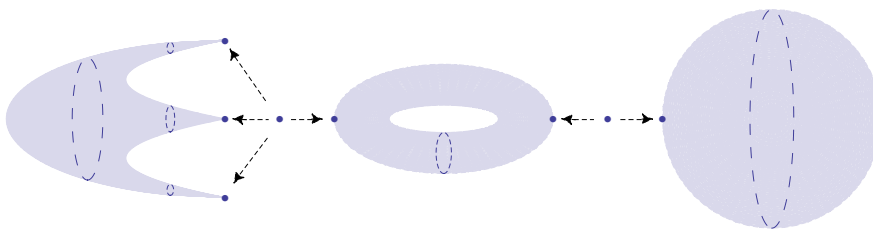}
\label{fig: disingularizedpost}
\end{figure}

\begin{defn}
Let $G$ be a finite nonabelian group. Then $X(G)$ satisfies the conditions of Proposition~\ref{basic properties} and so we may remove its vertices and fill in the resulting punctures with distinct points. The resulting space will be denoted by $Y(G)$ and is a disjoint union of Riemann surfaces which are in bijective correspondence with the pseudomanifold components of $X(G)$. We will see later that $Y(G)$ can be triangulated in a way that preserves the basic functoriality of the construction.

As a Riemann surface is determined up to homeomorphism by its genus $g$ which is a nonnegative integer, the quantity $m_g(G)$, the number or Riemann surfaces of genus $g$ that occur in the complex $Y(G)$ is a natural invariant of the group $G$ which we will study later.

$Y(G)$ will be called the desingularization of $X(G)$.
\end{defn}

\subsection{Main Structure Theorem}

In this subsection, we will prove the main structure theorem for the complex $X(G)$.

\begin{thm}
\label{thm: structurethm}
Let $G$ be a finite nonabelian group. Let $m_g(G)$ denote the number of surfaces of genus $g$
that occur in $Y(G)$, the desingularization of $X(G)$. Then $X(G)$ is homotopy equivalent
to a wedge product (bouquet) of Riemann surfaces and a finite number of circles where the surface of genus $g$ occurs
$m_g(G)$ times in the wedge product. In other words,
$$
X(G) \simeq (\bigvee_{g \geq 0} X_g^{[m_g]}) \bigvee (S^1 \vee \dots \vee S^1)
$$
where $X_g^{[m_g]}$ is the $m_g=m_g(G)$-fold bouquet of the Riemann surface $X_g$ of genus $g$ with itself.
\end{thm}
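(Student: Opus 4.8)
The plan is to build the homotopy equivalence by collapsing a suitable contractible subcomplex and then analyzing what remains. First I would note that $X = X(G)$ is the union of its pseudomanifold components $X_1, \dots, X_m$, which pairwise intersect in finite sets of vertices, and that the components of $Y(G)$ are obtained from the $X_i$ by identifying punctures; equivalently, $X_i$ itself is a triangulated Riemann surface $\Sigma_i$ with some finite set of vertices identified in groups (an identification space $\Sigma_i \to X_i$). So I would proceed in two stages: (i) understand the homotopy type of each $X_i$ relative to its finitely many ``special'' vertices (those arising from an identification, together with the finitely many vertices where $X_i$ meets other components), and (ii) understand how the $X_i$ are glued together along their shared finite vertex sets to form $X$.

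For stage (i): fix one component $X_i$ with quotient map $q_i \colon \Sigma_i \to X_i$ identifying a finite collection of vertices. Choose for each identification class a tree in the $1$-skeleton of $\Sigma_i$ connecting the vertices in that class (possible since $\Sigma_i$, being a connected triangulated surface, has connected $1$-skeleton); take $T_i \subset \Sigma_i$ to be the disjoint union of these trees, a forest, hence contractible on each component. Collapsing each tree $q_i(T_i)$ would realize $X_i$ up to homotopy, but more directly: since a finite tree is contractible and the inclusion of a subcomplex which is a disjoint union of contractible pieces into a CW complex is a cofibration, $X_i \simeq \Sigma_i / T_i$ only when $T_i$ is connected — in general identifying $r+1$ points of a connected CW complex to one point is homotopy equivalent to wedging on $r$ circles. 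Iterating over all identification classes, I get $X_i \simeq \Sigma_i \vee (S^1 \vee \dots \vee S^1)$ for an appropriate finite number of circles, because each merge of two previously-distinct points in a connected space adds one wedge circle (standard: $A$ with two points $a_0, a_1$ identified is homotopy equivalent to $A \vee S^1$, via choosing a path from $a_0$ to $a_1$). This shows each pseudomanifold component contributes exactly its underlying Riemann surface plus a finite bouquet of circles.

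For stage (ii): $X$ is assembled from $X_1, \dots, X_m$ by identifying finitely many vertices across components, and the ``intersection pattern'' is recorded by a finite graph (vertices $=$ components, plus the shared points). Again using that identifying finitely many points in a space that is a disjoint union of path-connected pieces yields that disjoint union wedged together along a maximal tree and with one extra circle for each remaining identification (equivalently, $\mathrm{rank}\,H_1$ of the ``identification graph'' many circles), I conclude $X \simeq (X_1 \vee \dots \vee X_m) \vee (S^1 \vee \dots \vee S^1)$. Combining with stage (i) and absorbing all the circle wedge summands together, and noting that the Riemann surface $X_g$ appears once for each pseudomanifold component of genus $g$, i.e. $m_g = m_g(G)$ times, gives
$$
X(G) \simeq \Bigl(\bigvee_{g \geq 0} X_g^{[m_g]}\Bigr) \bigvee (S^1 \vee \dots \vee S^1).
$$

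The main obstacle I anticipate is bookkeeping rather than anything deep: one must be careful that ``identifying a finite set of points in a path-connected CW complex to a single point'' is homotopy equivalent to wedging on the appropriate number of circles, and that this behaves well under iteration and is insensitive to the order of identifications and to which component a shared vertex is ``attached'' to. The cleanest way to make this rigorous is to introduce, for the whole complex at once, a forest $T \subset X$ spanning each path-component of the union of all ``to-be-merged'' vertex sets within the corresponding surface, observe $T$ is a cofibrant contractible-on-components subcomplex, collapse it, and then count: the number of circles produced equals the total number of vertex-identifications minus the number of independent ``spanning'' edges used, which is exactly the first Betti number of the relevant identification graph. One should also double-check that collapsing these $1$-dimensional subcomplexes never disturbs the manifold structure of the surface components — it does not, since the trees lie in the $1$-skeleton and collapsing them only affects a neighborhood that deformation retracts appropriately, leaving each $\Sigma_i$ intact up to homotopy.
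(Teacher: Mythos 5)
Your argument is correct and is essentially the paper's own: the paper packages the identifications via the auxiliary space $Z(G)$ obtained from $Y(G)$ by attaching a star graph for each vertex of $X(G)$, slides the attachment points by adjunction deformations, and collapses a spanning tree, which amounts to exactly your iterated ``identifying two points is, up to homotopy, wedging on a circle or wedging two components'' bookkeeping with the circle count given by the first Betti number of the identification graph. One small caution: your aside that collapsing the images $q_i(T_i)$ of the chosen trees ``would realize $X_i$ up to homotopy'' is not literally correct (after the endpoint identifications those images are homotopy equivalent to wedges of circles, not contractible), but you do not rely on it, and the two-point identification argument you actually run is sound.
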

\begin{proof}
Let $Z(G)$ be the simplicial complex obtained as follows: First take out all the vertices from $X(G)$ resulting in a punctured oriented topological $2$-manifold. Label each puncture point by the vertex in $X(G)$ it resulted from (note if the closed star of $v$ in $X(G)$ is a bouquet of $k$ disks, there will be $k$ punctures with label $v$). Now for each vertex $v$ in $X(G)$ which lies
in $k$ sheets, i.e. whose closed star is a bouquet of $k$ disks, take a disjoint $k$-star graph and identify its ends with the $k$ points in the desingularization $Y(G)$ filling in the punctures labelled by $v$. The result is a path connected space consisting of the Riemann surfaces
in $Y(G)$ connected by a finite number of star graphs (any two of which are disjoint from each other), one for each vertex in $X(G)$. This is the space $Z(G)$.

Now note that if one collapses all the star graphs in $Z(G)$ to their central points, one obtains a homotopy equivalence $Z(G) \simeq X(G)$. Now $Z(G)$ consists of the finite set of Riemann surfaces $X_1, \dots, X_m$ of $Y(G)$ connected together with various star graphs.
Note that any edge in the star graph can be thought of as obtained by adjunction of the unit interval via a gluing map on its two boundary points. By basic facts on adjunction spaces, we can move the point of attachment of the interval along a continuous path without changing the homotopy type of the whole space.  Thus if one has a $k$-star graph in $Z(G)$ with $k \geq 3$, note that two of the edges form an interval whose end points lie on the Riemann surfaces. Thus using adjunction deformations, we can move the central point attachment of the other $k-2$ edges so that they attach to Riemann surfaces in $Y(G)$ on  both ends. Thus up to homotopy equivalence, $Z(G)$ and hence $X(G)$ is homotopy equivalent to $Y(G)$ with a finite number of edges attached where both end points lie in $Y(G)$ and whose interiors are disjoint. Now fix a basepoint $x_i$ in each Riemann surface $X_i$ of $Y(G)$. Using adjunction space deformations, we can up to homotopy equivalence assume all the edges have endpoints in the set $\{x_1, \dots, x_m\}$ as the Riemann surfaces $X_i$ are path connected.

At this stage as $X(G)$ is connected, the union of these edges forms a connected graph with
vertices $\{ x_1, \dots, x_m\}$. Collapsing a spanning tree of this graph (which is contractible) one obtains a final homotopy equivalence. Notice that under this collapse, the points $x_1, \dots, x_m$ become a common bouquet point to which the Riemann surfaces $X_1, \dots X_m$ from the desingularization $Y(G)$ are attached. Any edges of the graph not in the spanning tree of the graph become circles in this bouquet. The theorem is hence proven.

\end{proof}

\begin{cor}
\label{cor: Betti}
The integral homology groups of $X(G)$ are free abelian groups of finite rank. If $\beta_i$
denotes the $i$th Betti number, i.e., the rank of $H_i(X(G);\mathbb{Z})$ then \\
$\beta_0=1, \beta_2=\sum_{g \geq 0} m_g(G)$ and
$$\beta_1=\sum_{g \geq 0} 2gm_g(G) + L$$ where $m_g(G)$ denotes the number of times the surface of genus $g$ occurs in the desingularization $Y(G)$ of $X(G)$ and $L$ denotes the number of circles occurring in the homotopy decomposition of Theorem~\ref{thm: structurethm}.
\end{cor}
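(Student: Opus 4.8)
The plan is to feed the homotopy decomposition of Theorem~\ref{thm: structurethm} directly into the standard computation of the homology of a wedge. Since singular homology is a homotopy invariant, it suffices to compute $H_i$ of $W := (\bigvee_{g \geq 0} X_g^{[m_g]}) \bigvee (S^1 \vee \cdots \vee S^1)$, a finite wedge of finitely many spaces each of which is a finite CW complex (a closed, connected, oriented surface, or a circle). All of these have nondegenerate basepoints, so the inclusion of each wedge summand is a cofibration and the reduced homology of the wedge splits as the direct sum of the reduced homologies of the summands; that is, $\tilde H_i(W) \cong \bigoplus_g \tilde H_i(X_g)^{\oplus m_g} \oplus \bigoplus_{j=1}^{L} \tilde H_i(S^1)$.

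Next I would recall the homology of the pieces. For the closed, connected, oriented surface $X_g$ of genus $g$ one has $\tilde H_0(X_g) = 0$, $H_1(X_g;\mathbb{Z}) \cong \mathbb{Z}^{2g}$, $H_2(X_g;\mathbb{Z}) \cong \mathbb{Z}$, and $H_i(X_g;\mathbb{Z}) = 0$ for $i > 2$; for the circle, $\tilde H_0(S^1) = 0$, $H_1(S^1;\mathbb{Z}) \cong \mathbb{Z}$, and $H_i(S^1;\mathbb{Z}) = 0$ for $i \geq 2$. Substituting these into the wedge formula gives $H_2(X(G);\mathbb{Z}) \cong \mathbb{Z}^{\sum_g m_g(G)}$ and $H_1(X(G);\mathbb{Z}) \cong \mathbb{Z}^{\sum_g 2g\, m_g(G) + L}$, each free abelian of finite rank since the indexing is finite: only finitely many $g$ have $m_g(G) \neq 0$, as $Y(G)$ has finitely many components. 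In particular $\beta_2 = \sum_g m_g(G)$ and $\beta_1 = \sum_g 2g\, m_g(G) + L$, and there is no torsion in any degree.

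Finally, for $H_0$ I would simply note that $X(G)$ is path-connected (established earlier in Section~\ref{sec: complex}; equivalently $W$ is connected), so $H_0(X(G);\mathbb{Z}) \cong \mathbb{Z}$ and $\beta_0 = 1$; higher homology vanishes because $X(G)$ is $2$-dimensional. Assembling the three computations yields the statement.

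As for difficulty: once Theorem~\ref{thm: structurethm} is in hand this is essentially a bookkeeping argument, so there is no genuine obstacle. The only point deserving a word of care is the hypothesis under which reduced homology commutes with wedges, namely that the bouquet points be nondegenerate — but this is automatic here since every space involved is a finite CW (indeed simplicial) complex and the wedge points are vertices. One should also make explicit that the sum over $g$ is finite, which follows from $Y(G)$ having only finitely many surface components.
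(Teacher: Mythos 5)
Your argument is correct and is exactly the one the paper intends: the paper's proof simply cites Theorem~\ref{thm: structurethm} together with the known homology of Riemann surfaces, and your computation via the splitting of reduced homology over the wedge (plus the standard homology of $X_g$ and $S^1$) fills in the same routine details. Nothing further is needed.
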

\begin{proof}
Follows immediately from Theorem~\ref{thm: structurethm} and the known homology of Riemann surfaces.
\end{proof}

\subsection{The Euler characteristic of $X(G)$}

In this section we find some useful formulas for the Euler characteristic of $X(G)$.
One of these follows immediately from the homology computation of the last section and the other from a simplex count of the simplicial complex which we go over now.

Vertices of $X(G)$: There are $|G|-|Z(G)|$ many vertices of type 1 and the same number of vertices of type 2 for a total of $V=2(|G|-|Z(G)|)$ vertices in $X(G)$.

Let $E_1$ denote the number of edges joining two type 1 vertices and let $E_2$ denote the number of edges joining a type 1 and type 2 vertex. Finally let $F$ denote the number of faces (2-simplices). As each face consists of 3 edges, (two in $E_2$ and one in $E_1$) and every edge lies in exactly two faces, we conclude $2F=2E_2$ and $F=2E_1$ and $E=E_1+E_2$ is the total number of edges. Thus the Euler characteristic of $X(G)$, $\chi(X(G))=V-E+F=V-(E_1+E_2)+E_2=V-E_1$.

Note that there is an edge in $E_1$ for every (unordered) set of two noncommuting elements of $G$. Thus
$E_1=\frac{1}{2} \sum_{x \in G} (|G|-|C(x)|)$ as for each $x \in G$, there are $|G|-|C(x)|$ elements $y$ which don't commute with $x$. Here $C(x)$ is the centralizer subgroup of $x$.

So $E_1=\frac{1}{2}(|G|^2-\sum_{x \in G}|C(x)|)$. The following lemma evaluates this sum:

\begin{lem}
\label{lem: centralizer sum}
For $G$ a finite group, $\sum_{x \in G} |C(x)| = |G|c$ where $c$ is the number of conjugacy classes of $G$.
\end{lem}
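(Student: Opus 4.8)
The plan is to compute the sum $\sum_{x \in G} |C(x)|$ by a standard double-counting argument, counting the set of ordered commuting pairs $P = \{(x,y) \in G \times G : xy = yx\}$ in two ways. First I would fix $x$ and sum over $y$: for each $x \in G$, the number of $y$ commuting with $x$ is exactly $|C(x)|$, so $|P| = \sum_{x \in G} |C(x)|$, which is the quantity we want to evaluate. Then I would fix $y$ instead and observe that by the orbit-stabilizer theorem applied to the conjugation action of $G$ on itself, the centralizer $C(y)$ is the stabilizer of $y$, so $|C(y)| = |G| / |\mathrm{orb}(y)|$ where $\mathrm{orb}(y)$ is the conjugacy class of $y$.

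The key step is then Burnside-style bookkeeping: $|P| = \sum_{y \in G} |C(y)| = \sum_{y \in G} \frac{|G|}{|\mathrm{orb}(y)|} = |G| \sum_{y \in G} \frac{1}{|\mathrm{orb}(y)|}$. Grouping the sum over $y$ according to which conjugacy class $y$ lies in, each class $\mathcal{O}$ contributes $|\mathcal{O}|$ terms each equal to $\frac{1}{|\mathcal{O}|}$, hence contributes exactly $1$ to the sum. Summing over all conjugacy classes gives $\sum_{y \in G} \frac{1}{|\mathrm{orb}(y)|} = c$, the number of conjugacy classes, so $|P| = |G| c$. Combining with the first count yields $\sum_{x \in G} |C(x)| = |G| c$ as claimed.

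There is really no hard part here — this is the classical ``class equation / Burnside counting'' identity, and the only thing to be careful about is making the two counts of $P$ line up and invoking orbit-stabilizer correctly. An alternative phrasing I could use, avoiding explicit mention of $P$, is: partition $G$ into its conjugacy classes $\mathcal{O}_1, \dots, \mathcal{O}_c$; for $x$ in a fixed class $\mathcal{O}_i$ all the centralizers are conjugate subgroups and hence have the same order $|C(x)| = |G|/|\mathcal{O}_i|$, so $\sum_{x \in \mathcal{O}_i} |C(x)| = |\mathcal{O}_i| \cdot |G|/|\mathcal{O}_i| = |G|$, and summing over $i$ gives $|G|c$. I would probably present this second version since it is self-contained and needs only orbit-stabilizer, but I would keep the double-counting perspective in mind as the conceptual reason the identity holds.
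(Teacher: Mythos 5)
Your proposal is correct, and the second phrasing you say you would present (partition $G$ into conjugacy classes, note $|C(x)|=|G|/|\mathcal{O}_i|$ for $x\in\mathcal{O}_i$, so each class contributes $|G|$) is exactly the paper's own proof. The double-counting framing adds nothing beyond this (both counts of $P$ reduce to the same sum $\sum_x |C(x)|$), but it is harmless.
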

\begin{proof}
Ler $x_1,\dots,x_c$ denote a complete list of conjugacy class representatives of $G$.
Note that $|C(x)|=|C(x')|$ when $x,x'$ lie in the same conjugacy class. Thus
$$
\sum_{x \in G} |C(x)| = \sum_{i=1}^c |C(x_i)|c_i
$$
where $c_i$ is the size of the conjugacy class of $x_i$. Note that $c_i=\frac{|G|}{|C(x_i)|}$ and so
$$
\sum_{x \in G} |C(x)|=\sum_{i=1}^c |G|=|G|c
$$
as we set out to show.

\end{proof}

We record a side corollary of Lemma~\ref{lem: centralizer sum}:

\begin{cor}
Let $G$ denote a finite group and $P$ denote the probability that two elements picked independently and uniformly from $G$ commute. Then
$P=\frac{c}{|G|}$ where $c$ is the number of conjugacy classes of $G$.
\end{cor}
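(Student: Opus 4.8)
The plan is to express $P$ directly as a ratio of counts and then invoke Lemma~\ref{lem: centralizer sum}. Since the two elements are chosen independently and uniformly from $G$, the sample space has size $|G|^2$, and
$$
P = \frac{\#\{(x,y) \in G \times G : xy = yx\}}{|G|^2}.
$$
So the first step is to count the favorable pairs.

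Next I would fix $x \in G$ and observe that the set of $y$ with $xy = yx$ is exactly the centralizer subgroup $C(x)$, which has $|C(x)|$ elements. Summing over all choices of $x$ gives
$$
\#\{(x,y) : xy = yx\} = \sum_{x \in G} |C(x)|.
$$
Then I would substitute the evaluation of this sum provided by Lemma~\ref{lem: centralizer sum}, namely $\sum_{x \in G} |C(x)| = |G|c$ where $c$ is the number of conjugacy classes of $G$.

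Combining these, $P = \dfrac{|G|c}{|G|^2} = \dfrac{c}{|G|}$, which is the claim. There is no real obstacle here: the statement is an immediate consequence of Lemma~\ref{lem: centralizer sum} together with the elementary observation that the number of partners commuting with a fixed element is the order of its centralizer. The only point requiring a word of care is that the probability model is the uniform distribution on ordered pairs, so that the denominator is $|G|^2$ rather than, say, the number of unordered pairs; once that is fixed the computation is a one-line substitution.
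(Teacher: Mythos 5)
Your proposal is correct and follows exactly the same route as the paper: count commuting ordered pairs via $\sum_{x \in G}|C(x)|$, divide by $|G|^2$, and apply Lemma~\ref{lem: centralizer sum} to get $P=\frac{c}{|G|}$. Nothing further is needed.
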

\begin{proof}
As any ordered pair of elements in $G \times G$ are equally likely to be selected we have
$$
P=\frac{\text{Number of }(x,y) \text{ where } x, y \text{ commute} }{|G|^2}
=\frac{sum_{x \in G} |C(x)|}{|G|^2}=\frac{c}{|G|}.
$$
where we used Lemma~\ref{lem: centralizer sum} in the last step.
\end{proof}

We now record two formulas regarding the Euler characteristic of $X(G)$.

\begin{thm}
Let $G$ be a finite nonabelian group. Then
$$
V-E+F=\chi(X(G))=\beta_0-\beta_1+\beta_2
$$
becomes
$$
2(|G|-|Z(G)|)-\frac{1}{2}(|G|^2-|G|c)=\chi(X(G))=1-L+\sum_{g \geq 0} (1-2g)m_g(G)
$$
where $c$ is the number of conjugacy classes of $G$ and $m_g(G)$ denotes the number of times
the surface of genus $g$ occurs in $Y(G)$ the desingularization of $X(G)$. $L$ denotes the number of circles that occur in the homotopy decomposition of $X(G)$ given by Theorem~\ref{thm: structurethm}.
\end{thm}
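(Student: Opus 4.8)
The plan is to assemble the claimed identity from two independent computations of $\chi(X(G))$, both of which are essentially already in hand from the preceding material.

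First I would establish the left-hand equality, the combinatorial side. From the simplex count recorded just before the statement we have $V = 2(|G|-|Z(G)|)$, and the relations $2F = 2E_2$, $F = 2E_1$ coming from the fact that every face has one type-$1$ edge and two type-$2$ edges while every edge lies in exactly two faces. Hence $\chi(X(G)) = V - E + F = V - (E_1 + E_2) + E_2 = V - E_1$. Substituting $E_1 = \frac{1}{2}(|G|^2 - \sum_{x\in G}|C(x)|)$ and then invoking Lemma~\ref{lem: centralizer sum} to replace $\sum_{x\in G}|C(x)|$ by $|G|c$ gives $E_1 = \frac{1}{2}(|G|^2 - |G|c)$, and therefore
$$
\chi(X(G)) = 2(|G|-|Z(G)|) - \tfrac{1}{2}(|G|^2 - |G|c),
$$
which is the left-hand expression in the theorem.

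Next I would establish the right-hand equality, the homotopy-theoretic side. By Theorem~\ref{thm: structurethm}, $X(G)$ is homotopy equivalent to a wedge of the surfaces $X_g$ (each appearing $m_g(G)$ times) together with $L$ copies of $S^1$. Since the Euler characteristic is a homotopy invariant and is additive over wedge sums in the sense that $\chi(A \vee B) = \chi(A) + \chi(B) - 1$ (each wedge identification merges two points into one), a wedge of $N$ spaces has Euler characteristic $\sum \chi(\text{piece}) - (N-1)$. Using $\chi(X_g) = 2 - 2g$ and $\chi(S^1) = 0$, and counting the total number of wedge summands as $\big(\sum_{g} m_g(G)\big) + L$, we get
$$
\chi(X(G)) = \sum_{g \geq 0}(2-2g)m_g(G) + 0\cdot L - \Big(\sum_{g\geq 0} m_g(G) + L - 1\Big) = 1 - L + \sum_{g\geq 0}(1-2g)m_g(G).
$$
Alternatively, and perhaps more cleanly, one can just read this off from Corollary~\ref{cor: Betti}: $\chi(X(G)) = \beta_0 - \beta_1 + \beta_2 = 1 - \big(\sum_g 2g\,m_g(G) + L\big) + \sum_g m_g(G) = 1 - L + \sum_g (1-2g)m_g(G)$.

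Combining the two displays yields the asserted chain of equalities. There is no real obstacle here; the only point requiring a little care is the bookkeeping in the wedge-sum Euler characteristic (getting the $-(N-1)$ correction and the count of summands right), and that is circumvented entirely by citing Corollary~\ref{cor: Betti} instead. I would present the proof in that order — combinatorial side, then homology side via Corollary~\ref{cor: Betti} — and keep it to a few lines.
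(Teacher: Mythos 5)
Your proposal is correct and follows essentially the same route as the paper: the combinatorial side is computed via $V-E+F=V-E_1$ using the simplex counts and Lemma~\ref{lem: centralizer sum}, and the homological side is read off from Corollary~\ref{cor: Betti} (equivalently Theorem~\ref{thm: structurethm}); your direct wedge-sum bookkeeping is also correct but, as you note, redundant given the corollary.
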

\begin{proof}
Follows as the Euler characteristic can either be computed by the alternating sum $V-E+F$ or the alternating sum of Betti numbers. The Betti sum is evaluated using corollary~\ref{cor: Betti} while $V-E+F=V-E_1$ is evaluated using the calculations made earlier in this subsection.
\end{proof}

\subsection{Group Theoretic Analysis of Closed Stars of $X(G)$}

\begin{defn}
Let $G$ be a finite group and $\alpha \in G$. The cyclic subgroup $C_{\alpha}=<\alpha>$
generated by $\alpha$ acts on $G$ by conjugation. The orbits are called $\alpha$-conjugacy classes and have size dividing the order of $\alpha$. Two elements of the group are said to be
$\alpha$-conjugate if they lie in the same $\alpha$-conjugacy class. An element $\{x\}$
forms an $\alpha$-conjugacy class of size $1$ if and only if $x$ commutes with $\alpha$
if and only if $x \in C(\alpha)$, the centralizer group of $\alpha$. We will denote
the conjugate of $x$ by $\alpha^{-1}$ i.e., $\alpha^{-1}x\alpha$ by $x^{\alpha}$.
\end{defn}

Let us study the closed star of a type 2 vertex $v=(\alpha,2)$ in $X(G)$. A face in $\bar{St}(v)$
is of the form $\sigma_1=[(x,1),(y,1),(\alpha,2)]$ where $x,y$ do not commute and $\alpha=xy$.
Note $\alpha$ does not commute with either $x$ or $y$.
Now let us consider the adjacent face $\sigma_2=[(y,1), (z,1), (\alpha,2)]$. Then $yz=\alpha$
and so $yz=xy$ i.e. $z=y^{-1}xy=y^{-1}x^{-1}xxy=\alpha^{-1}x\alpha=x^{\alpha}$.
Thus the adjacent face is $\sigma_2=[(y,1),(x^{\alpha},1), (\alpha,2)]$. Now taking the equation
$\alpha=xy$ corresponding to the first face $\sigma_1$ and conjugating it by $\alpha^{-1}$ we
see that $\alpha=x^{\alpha}y^{\alpha}$. Thus the face $\sigma_3$ other than $\sigma_1$ which is adjacent to $\sigma_2$
which contains the vertex $(\alpha,2)$ is $\sigma_3=[(x^{\alpha},1),(y^{\alpha},1),(\alpha,2)]$.
Note that $\sigma_3$ was obtained from $\sigma_1$ by conjugating its 3 vertices by
$\alpha^{-1}$.

We now have made an important observation. In a given sheet of the closed star of a type 2 vertex, given one triangle in the sheet, the triangle two away in the same sheet is obtained by conjugating all vertices by $\alpha^{-1}$. Thus the entire sheet is made by
$\alpha$-conjugacy applied to the two initial triangles $[(x,1),(y,1),(\alpha,2)]$ and
$[(y,1),(x^{\alpha},1),(\alpha,2)]$.

\begin{figure}[htp]
\centering
\caption{A representative sheet in $\bar{St}(v)$ for $v=(\alpha,2)$.}
\includegraphics[width=.4 \textwidth]{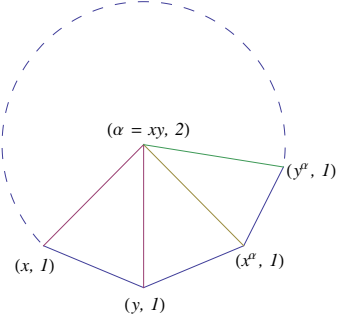}
\label{fig: twosheet}
\end{figure}

There are two possibilities: Either the $\alpha$-conjugacy orbits of these two triangles are distinct and the sheet is made from two $\alpha$-conjugacy orbits of triangles or
these two triangles are $\alpha$-conjugate and the sheet consists of a single $\alpha$-conjugacy orbit of triangles.

The latter happens if and only if $x$ and $y=x^{-1}\alpha$ are $\alpha$-conjugate.
This happens if and only if there is a positive integer $k$ such that
$x^{\alpha^k}=y$ i.e. $xx^{\alpha^k}=\alpha$.
The next lemma shows that this is equivalent to saying $\alpha$ is the product of two
distinct elements in the $\alpha$-conjugacy class of $x$.

\begin{lem}
Let $G$ be a finite group and $\alpha,x \in G$ be two elements which don't commute. Then the following are
equivalent: \\
(1) $xx^{\alpha^k}=\alpha$ for some positive integer $k$. \\
(2) $\alpha$ is the product of two distinct elements in the $\alpha$-conjugacy class of
$x$.
\end{lem}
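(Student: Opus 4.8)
The plan is to prove the two implications separately; the forward direction is essentially immediate, and the reverse direction needs one short conjugation computation together with a bookkeeping step that uses the finiteness of $G$.

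For $(1)\Rightarrow(2)$: assuming $xx^{\alpha^k}=\alpha$ for some positive integer $k$, the elements $x$ and $x^{\alpha^k}$ both lie in the $\alpha$-conjugacy class of $x$ and their product is $\alpha$, so the only thing to verify is that they are distinct. If instead $x=x^{\alpha^k}$, then $\alpha=x\cdot x=x^2$, which forces $x$ to commute with $\alpha$, contradicting the hypothesis that $x$ and $\alpha$ do not commute. Hence $x\ne x^{\alpha^k}$ and $(2)$ holds.

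For $(2)\Rightarrow(1)$: write $\alpha=ab$ with $a=x^{\alpha^i}$ and $b=x^{\alpha^j}$ distinct elements of the $\alpha$-conjugacy class of $x$, where $i,j\ge 0$. Since conjugation by powers of $\alpha$ is a group action, $\bigl(u^{\alpha^a}\bigr)^{\alpha^b}=u^{\alpha^{a+b}}$ and $(uv)^{g}=u^{g}v^{g}$. Conjugating the equation $\alpha=ab$ by $\alpha^{-i}$, which fixes $\alpha$, gives $\alpha=a^{\alpha^{-i}}b^{\alpha^{-i}}=x\cdot x^{\alpha^{j-i}}$, that is, $xx^{\alpha^{m}}=\alpha$ with $m=j-i$. (Note $m\ne 0$ since $m=0$ would give $\alpha=x^2$, impossible as before — and in fact $a\ne b$ already forces $i\not\equiv j$ modulo the order of $\alpha$.) Finally, writing $n$ for the order of $\alpha$, conjugation by $\alpha^{m}$ agrees with conjugation by $\alpha^{m+tn}$ for every integer $t$, so choosing $t$ large enough we may take $k=m+tn$ to be a positive integer with $xx^{\alpha^{k}}=\alpha$, which is exactly $(1)$.

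The only point requiring care — the \emph{main obstacle}, such as it is — is that the exponent $m=j-i$ produced by the conjugation step need not be positive, so one must invoke the finite order of $\alpha$ to shift $m$ into the positive range without altering the relevant conjugate of $x$; the remaining manipulations are direct computations in $G$.
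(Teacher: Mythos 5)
Your proof is correct and follows essentially the same route as the paper: conjugate the equation $\alpha=ab$ so that one factor becomes $x$, then use the periodicity of conjugation by $\alpha$ (the paper reduces the exponent modulo the orbit size, you shift by multiples of the order of $\alpha$) to make the exponent a positive integer. Your explicit check of distinctness in $(1)\Rightarrow(2)$ is a small addition the paper treats as obvious, but it is the same argument.
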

\begin{proof}
Clearly (1) implies (2). Thus we only have to show (2) implies (1). Let $m$ be the size of the $\alpha$-conjugacy class of $x$, then $x^{\alpha^s}x^{\alpha^t}=\alpha$ for some
$0 \leq s,t < m$ with $s \neq t$. Conjugating this equation by $\alpha^s$ we find
$xx^{\alpha^{t-s}}=\alpha$. $t-s$ is congruent modulo $m$ to a unique positive integer $k$ with
$1 \leq k < m$ and $xx^{\alpha^k}=\alpha$. Thus we are done.
\end{proof}

\begin{defn}
Let $G$ be a finite group and $\alpha \in G$. There are three mutually exclusive, exhaustive possibilities
for an $\alpha$-conjugacy class $C$: \\
(1) $C$ has size $1$ and consists of a single element in $C(\alpha)$. \\
(2) $\alpha$ is the product of two distinct elements in $C$. We call such an
$\alpha$-conjugacy class productive. \\
(3) $\alpha$ is not the product of two distinct elements in $C$ and $C$ contains at least
$2$ elements. We call such an $\alpha$-conjugacy class nonproductive. \\
Let $p_{\alpha}$ denote the number of productive $\alpha$-conjugacy classes and
$n_{\alpha}$ denote the number of nonproductive $\alpha$-conjugacy classes.
Thus $p_{\alpha}+n_{\alpha}+|C(\alpha)|$ is the total number of $\alpha$-conjugacy classes.
\end{defn}

We now summarize the earlier analysis of the closed star of a type-2 vertex.

\begin{pro}[Closed star of type 2 vertices]
\label{pro: star2}
Let $\alpha$ be a noncentral element of a finite group $G$ and let $v=(\alpha,2)$ be the corresponding type 2 vertex in $X(G)$. Then for each sheet in $\bar{St}(v)$, the vertices of $Lk(v)$
in that sheet consist of either:\\
(1) Two non-productive $\alpha$-conjugacy classes of type 1 vertices. In this case
there are exactly two $\alpha$-conjugacy classes of triangles in that sheet and the two
$\alpha$-conjugacy classes have the same size. Thus the number of triangles in the sheet
is $2\ell$, an even number, where $\ell \geq 2$ is a divisor of the order of $\alpha$. \\
(2) A single productive $\alpha$-conjugacy class of type 1 vertices. In this case there is exactly one $\alpha$-conjugacy class of triangles in the sheet. Thus the number of triangles in the sheet is $\ell$ where $\ell \geq 3$ is an odd divisor of the order of $\alpha$. The number of triangles on the sheet in this case is odd.
\end{pro}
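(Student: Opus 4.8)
I would start by fixing a sheet $S$ of $\bar{St}(v)$ together with a triangle $\sigma_1=[(x,1),(y,1),(\alpha,2)]$ in it, so $xy=\alpha$ and $x,y$ do not commute; then $x$ cannot commute with $\alpha=xy$ (else it would commute with $y$), and likewise for $y$, so the $\alpha$-conjugacy classes $C_x,C_y$ of $x,y$ each have size $\ge 2$ and divide the order of $\alpha$. Let $\psi$ denote the automorphism $g\mapsto g^{\alpha}=\alpha^{-1}g\alpha$; it fixes $\alpha$, and $|C_g|$ is the period of $g$ under $\psi$. The analysis preceding the proposition shows that $S$ is obtained from $\sigma_1$ and $\sigma_2=[(y,1),(\psi(x),1),(\alpha,2)]$ by iterating $\psi$ on all vertices, with $\sigma_{2i+1}=\psi^{i}(\sigma_1)$ and $\sigma_{2i+2}=\psi^{i}(\sigma_2)$, so that going once around the circle $Lk(v)\cap S$ the type $1$ vertices occur in the cyclic order $x,\,y,\,\psi(x),\,\psi(y),\,\psi^{2}(x),\dots$, consecutive vertices spanning consecutive triangles; and, by the lemma just before the proposition, either $x$ and $y$ are $\alpha$-conjugate --- equivalently, $C_x$ is productive --- in which case $S$ is a single $\psi$-orbit of triangles, or they are not, and $S$ is the union of the two distinct $\psi$-orbits of $\sigma_1$ and $\sigma_2$. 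The plan is to treat these two cases, which are exactly (2) and (1) of the statement, using throughout that $S$ is $\psi$-invariant (since $\psi$ fixes $v$ and carries $\sigma_1$ into $S$): the link edge of $\psi^{i}(\sigma_1)$ is $[(\psi^{i}(x),1),(\psi^{i}(y),1)]$ and lies in $Lk(v)\cap S$, so every element of $C_x$ occurs as a link vertex of $S$, and likewise, via the $\psi^{i}(\sigma_2)$, every element of $C_y$; hence the link vertices of $S$ are \emph{exactly} $C_x\cup C_y$, and $S$ has $|C_x\cup C_y|$ triangles (a disk has as many triangles as boundary vertices).

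\emph{Case $x\not\sim_{\alpha}y$ (statement (1)).} First I would check that $C_x$ and $C_y$ are non-productive. If $C_y$ were productive, the lemma before the proposition gives $y\cdot y^{\alpha^{k}}=\alpha$ for some positive $k$, so $y^{\alpha^{k}}=y^{-1}\alpha=y^{-1}xy=x^{\alpha}$ (the last equality being the identity $x^{\alpha}=y^{-1}xy$ recorded before the proposition), putting $x\in C_y$, a contradiction; the argument for $C_x$ is symmetric, via $x^{\alpha^{k}}=x^{-1}\alpha=y$. Since $C_x$ and $C_y$ are now disjoint, the cyclic list $x,y,\psi(x),\psi(y),\dots$ of link vertices of $S$ alternates strictly between $C_x$ and $C_y$, hence has even length with equally many terms of each kind; as this list exhausts $C_x$ and $C_y$, we get $|C_x|=|C_y|=:\ell$. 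Therefore $S$ has $|C_x\cup C_y|=2\ell$ triangles, an even number with $\ell\ge 2$ and $\ell$ dividing the order of $\alpha$; and since $\sigma_1,\sigma_2,\sigma_3,\dots$ alternate between the two $\psi$-orbits in the same way, those orbits have equal size $\ell$. This is (1).

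\emph{Case $x\sim_{\alpha}y$ (statement (2)).} Here $C_x=C_y$ is productive and $S$ is a single $\psi$-orbit, so $S$ has $|C_x\cup C_y|=|C_x|=:\ell$ triangles. To see that $\ell$ is odd I would write $y=\psi^{j}(x)$ with $1\le j<\ell$ and apply $\psi^{j}$ to $xy=\alpha$: since $\psi^{j}(x)=y$ and $\psi^{j}(\alpha)=\alpha$, this gives $y\cdot\psi^{2j}(x)=\alpha=xy$, so $\psi^{2j}(x)=y^{-1}xy=x^{\alpha}=\psi(x)$, hence $\psi^{2j-1}(x)=x$ and $\ell\mid 2j-1$. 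As $2j-1$ is odd, $\ell$ is odd; being also $\ge 2$, it is $\ge 3$; and $\ell$ divides the order of $\alpha$. This is (2).

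\emph{Where the difficulty lies.} The group-theoretic computations are routine once one records $x^{\alpha}=y^{-1}xy$. The substantive point is the passage from ``number of triangles of a sheet'' to $\alpha$-conjugacy class sizes: one must observe that a sheet, being $\psi$-stable and a union of full $\psi$-orbits of triangles, realizes the \emph{entire} class(es) $C_x$ (and $C_y$) along its link circle, and then, in the non-conjugate case, read off $|C_x|=|C_y|$ from the strict alternation of the two vertex-types around that circle; in the conjugate case this alternation is instead encoded in the divisibility $\ell\mid 2j-1$, which is what yields the parity.
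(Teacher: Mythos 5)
Your proof is correct and follows essentially the same route as the paper: both arguments rest on the observation that conjugation by $\alpha$ advances two steps along the rim of a sheet, so the sheet is the union of the $\psi$-orbits of the two initial triangles, with the two cases of the statement governed by whether $x$ and $y$ are $\alpha$-conjugate (the lemma preceding the proposition). The only minor differences are cosmetic: you establish oddness in case (2) algebraically via $\ell \mid 2j-1$ and deduce $\ell \geq 3$ from parity, where the paper argues via the index of $\langle 2 \rangle$ in $\mathbb{Z}/n$ and the geometric fact that a sheet has at least three triangles, and you spell out explicitly that both classes in case (1) are non-productive.
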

\begin{proof}
Most of this proposition was proven in the preceding paragraphs. Only a few final comments are in order. In case (1), the fact that the two orbits of triangles are interlaced shows that they have the same number of triangles which shows the two non-productive $\alpha$-conjugacy classes
in the link must have the same size. In case (2), the fact that a sheet must have at least $3$ triangles shows that
productive $\alpha$-conjugacy classes must have size $\geq 3$. (A direct algebraic argument can also show that the class cannot have size $2$ but we appeal to the geometry directly!)
The divisibility comments follow as the size of any $\alpha$-conjugacy orbit divides the order of $\alpha$. The fact that there are an odd number of triangles
in the case of one $\alpha$-conjugacy class follows because the action of $\alpha$-conjugacy always moves one $2$ steps along the rim of a $n$-triangle sheet,
thus there are two or one $\alpha$-conjugacy classes in a sheet, depending if the subgroup generated by $2$ has index two or one in the cyclic group of order $n$ which depends exactly if $n$ is even or odd respectively.
\end{proof}

Note from proposition~\ref{pro: star2}, whether a sheet about a type 2 vertex consists of two or one $\alpha$-conjugacy orbits, depends completely on whether it consists of an even or odd
number of triangles.

\begin{cor}
\label{cor: star2}
If $\alpha$ is a non-central element of a finite group $G$ then: \\
(1) $n_{\alpha}$ is even. \\
(2) The closed star of $(\alpha,2)$ in $X(G)$ consists of $\frac{n_{\alpha}}{2}+p_{\alpha}$ sheets.\\
(3) If $\alpha$ has prime order $p$, each sheet in the closed star of $(\alpha,2)$ has either
$p$ or $2p$ triangles depending on if it corresponds to a productive orbit or two non-productive orbits respectively. \\
(4) If $\alpha$ has order $2$ then each sheet in the closed star of $(\alpha,2)$ has $4$ triangles
and consists of two non-productive orbits. In particular $\alpha$ has no productive orbits and
$|G| \equiv |C(\alpha)| \text{ mod } 4$.\\
\end{cor}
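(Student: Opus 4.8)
The plan is to derive all four parts from Proposition~\ref{pro: star2} together with the fact, implicit in the paragraphs preceding it, that distinct sheets of the disk bouquet $\bar{St}(\alpha,2)$ meet only at the center vertex $v=(\alpha,2)$, so every type~$1$ vertex of $Lk(v)$ lies in exactly one sheet. First I would record the dictionary: the type~$1$ vertices adjacent to $v$ are precisely the elements of $G$ that fail to commute with $\alpha$, i.e.\ the elements of $G-C(\alpha)$, and any $\alpha$-conjugacy class is either a singleton contained in $C(\alpha)$ or else entirely disjoint from $C(\alpha)$ (if $x^{\alpha^k}\in C(\alpha)$ then conjugating back shows $x\in C(\alpha)$). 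Hence the $\alpha$-conjugacy classes meeting $G-C(\alpha)$ are exactly the $p_\alpha$ productive and $n_\alpha$ nonproductive ones, and each such class sits inside a unique sheet. By Proposition~\ref{pro: star2}, the link circle of a sheet consists either of one productive class (case (2)) or of two \emph{distinct} nonproductive classes (case (1)). This yields a bijection between productive classes and sheets of the second kind, and a perfect matching of the set of nonproductive classes that groups them in pairs, one pair per sheet of the first kind. Consequently $n_\alpha$ is even and the number of sheets is $p_\alpha+\tfrac{n_\alpha}{2}$, which is (1) and (2).

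For (3), I would note that when $\alpha$ has prime order $p$ every $\alpha$-conjugacy orbit has size dividing $p$, hence size $1$ or $p$. A nonproductive class has size $\geq 2$, so size exactly $p$, and by Proposition~\ref{pro: star2}(1) its sheet has $2p$ triangles. A productive class has odd size $\geq 3$ dividing $p$, forcing $p$ odd and the class size equal to $p$, so by Proposition~\ref{pro: star2}(2) its sheet has $p$ triangles; this is exactly statement (3).

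For (4), specialize to $|\alpha|=2$: orbit sizes divide $2$, so there is no $\alpha$-conjugacy class of odd size $\geq 3$, hence $p_\alpha=0$, and every noncentral $\alpha$-conjugacy class has size $2$. By Proposition~\ref{pro: star2}(1) each sheet then consists of two nonproductive orbits and has $2\cdot 2=4$ triangles. Finally, $G-C(\alpha)$ is partitioned into these size-$2$ nonproductive classes, so $n_\alpha=\tfrac{|G|-|C(\alpha)|}{2}$; since $n_\alpha$ is even by part (1), we get $4\mid |G|-|C(\alpha)|$, i.e.\ $|G|\equiv|C(\alpha)|\pmod 4$.

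I do not expect a genuine obstacle: past Proposition~\ref{pro: star2} this is pure bookkeeping. The one point that warrants a careful sentence is the claim in (1) that $n_\alpha$ itself (not merely each sheet's triangle count) is even: this needs the observation that a sheet of the first kind carries exactly two, and genuinely distinct, $\alpha$-conjugacy classes in its link, and that no class can appear in two different sheets because the sheets of a disk bouquet are disjoint away from the center — so the nonproductive classes really are covered by a disjoint family of unordered pairs.
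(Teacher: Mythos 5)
Your argument is correct and follows essentially the same route as the paper: everything is read off from Proposition~\ref{pro: star2}, using that each noncommuting element $(x,1)$ lies in exactly one sheet of $\bar{St}((\alpha,2))$ so that productive classes correspond to odd sheets and nonproductive classes pair off two per even sheet, which gives (1) and (2), and the divisibility-by-$|\alpha|$ observations give (3) and (4) (including $p_\alpha=0$ and $4\mid |G|-|C(\alpha)|$ when $|\alpha|=2$). The only cosmetic difference is that you justify the "unique sheet" fact via the disk-bouquet structure of the closed star, while the paper notes it via the uniqueness of $y$ with $xy=\alpha$; these are interchangeable.
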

\begin{proof}
Given $x$ that does not commute with $\alpha$ there is a unique $y$ such that $xy=\alpha$.
This implies that $(x,1)$ is on a unique sheet of $\bar{St}((\alpha,2)$. The vertices in the link of this sheet then consist of the $\alpha^{-1}$-orbit of $(x,1)$ and $(y,1)$ and hence consist of exactly the union of the two (not necessarily distinct) $\alpha$-conjugacy classes of $x$ and $y$.

By Proposition~\ref{pro: star2}, the non-productive $\alpha$-conjugacy classes must pair up
to make sheets and so there are an even number of them. Thus (1) is proven. (2) also follows immediately from the same proposition. (3) follows similarly once one notes that the only $\ell \geq 2$ dividing a prime $p$ is $\ell=p$. (4) follows as we have seen that productive $\alpha$-conjugacy classes must have size $\geq 3$ but also size dividing the order of $\alpha$ which is 2
and is hence impossible in this case. $G-C(\alpha)$ then decomposes as an even number of
non-productive orbits each of size 2 which implies $4$ divides $|G|-|C(\alpha)|$.
\end{proof}

Now we consider the closed star of vertices of type 1. Let $w=(\alpha,1)$ be a type 1 vertex of $X(G)$. Let $[(x,1),(\alpha,1),(x\alpha,2)]$ be a face contaning $w$. $[(\alpha,1),(x^{\alpha},1),(x\alpha,2)]$ and $[(\alpha,1),(x,1),(\alpha x,2)]$ are the adjacent faces containing $w$ also.
As $(\alpha x)^{\alpha}=x\alpha$, it is easy to check that the 2-labelled vertices in the part of
$Lk(w)$ in this sheet consists of the $\alpha$-conjugacy orbit of $\alpha x$. It also follows  that the 1-labelled vertices in the part of $Lk(w)$ in this sheet consists of the $\alpha$-conjugacy orbit of $x$. The $\alpha$-conjugacy orbit of the $1$-vertices in the link for this sheet has the same size as the $\alpha$-conjugacy orbit of the $2$-vertices in the link as the edges along the link of $w$ alternate between type 1 and type 2 vertices.
Furthermore the $\alpha$-conjugacy class of the $2$-vertices is uniquely determined from the one for the $1$-vertices by the fact that there exist $u$ in the type 2 vertex orbit and $v$ in the type 1 vertex orbit whose "difference" $v^{-1}u$ is $\alpha$.

We have thus proven:

\begin{pro}[Closed star of type 1 vertices]
\label{pro: star1}
Let $G$ be a nonabelian group and let $w=(\alpha,1)$ be a type $1$ vertex in $X(G)$.
Then each sheet of $\bar{St}(w)$ has $2\ell$ triangles where $\ell \geq 2$ divides the order of
$\alpha$. The vertices along $Lk(w)$ in any sheet alternate between type 1 and type 2 vertices and there is a single $\alpha$-conjugacy orbit of type 1 vertices and a single $\alpha$-conjugacy orbit of type 2 vertices in the link of any given sheet. These orbits have the same size. One orbit determines
the other by the fact the type 2 orbit contains $u$ and the type 1 orbit contains $v$ such that $v^{-1}u=\alpha$. As each sheet contains one $\alpha$-conjugacy class of size $> 1$ each of type 1
and type 2 vertices, the total number of sheets in $\bar{St}(w)$ is the total number of
$\alpha$-conjugacy classes of size $> 1$ i.e., $n_{\alpha}+p_{\alpha}$.
\end{pro}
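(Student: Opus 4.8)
The statement is essentially the content of the three paragraphs preceding it, so the plan is to consolidate that analysis into a clean argument, leaning on Proposition~\ref{pro: closedstar} for the geometric scaffolding. By that proposition $\bar{St}(w)$ is an $(m_1,\dots,m_k)$-disk bouquet whose link is a disjoint union of $k$ circles, so it suffices to work inside a single sheet $S$, identify its simplicial type with $2\ell$ for a suitable $\ell$, and then set up the bijection between sheets and $\alpha$-conjugacy classes.

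First I would fix a face of $S$ of the form $\sigma_1=[(x,1),(\alpha,1),(x\alpha,2)]$, which is legitimate since any face of $\bar{St}(w)$ contains $(\alpha,1)$ as one of its two type-$1$ vertices; here $x\notin C(\alpha)$ because $(x,1)$ and $(\alpha,1)$ span an edge. The edge of $\sigma_1$ opposite $w$ is $[(x,1),(x\alpha,2)]$, contributing one type-$1$ and one type-$2$ vertex to $Lk(w)\cap S$. Next I would write down the two triangles of $S$ meeting $\sigma_1$ along an edge through $w$: across $[(x,1),(\alpha,1)]$ one gets $[(\alpha,1),(x,1),(\alpha x,2)]$, and across $[(\alpha,1),(x\alpha,2)]$ one gets $[(\alpha,1),(x^{\alpha},1),(x\alpha,2)]$. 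The identities $(\alpha x)^{\alpha}=x\alpha$ and, inductively, $(\alpha x)^{\alpha^{j+1}}=x^{\alpha^{j}}\alpha$ then show the type-$2$ vertices produced are $\alpha$-conjugates of $\alpha x$ and the type-$1$ vertices are $\alpha$-conjugates of $x$. Iterating around $S$ — using at each step that every edge through $w$ lies in exactly two triangles, which by Proposition~\ref{pro: closedstar} lie in the same sheet — the circle $Lk(w)\cap S$ is swept out as
\[
\cdots \;-\; \big((\alpha x)^{\alpha^{j}},2\big)\;-\;\big(x^{\alpha^{j}},1\big)\;-\;\big((\alpha x)^{\alpha^{j+1}},2\big)\;-\;\big(x^{\alpha^{j+1}},1\big)\;-\;\cdots,
\]
alternating between the $\alpha$-conjugacy orbit of $x$ and that of $\alpha x$. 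Because these two orbits alternate around one simple cycle they have the same size $\ell$ (equivalently, $\alpha^k$ commutes with $x$ iff it commutes with $\alpha x$), so $S$ is a $2\ell$-disk; $\ell\geq 2$ since $x\notin C(\alpha)$ makes the orbit of $x$ nontrivial, and $\ell$ divides the order of $\alpha$ as the length of an $\langle\alpha\rangle$-orbit. For the determination clause I would simply point to $v=x$ in the type-$1$ orbit and $u=x\alpha$ in the type-$2$ orbit, which satisfy $v^{-1}u=\alpha$; since this relation is preserved by simultaneous $\alpha$-conjugation, the type-$1$ orbit together with this relation pins down the type-$2$ orbit.

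Finally I would count the sheets by sending each sheet $S$ to the $\alpha$-conjugacy class $C_S$ of its type-$1$ link vertices, which is well defined and has size $\ell\geq 2$. Conversely, given an $\alpha$-conjugacy class $C$ with $|C|>1$, every $x\in C$ fails to commute with $\alpha$ (otherwise $C=\{x\}$), so $[(x,1),(\alpha,1),(x\alpha,2)]$ is a face of $\bar{St}(w)$, and the sheet through it has $C$ as its type-$1$ orbit; by the explicit description above this sheet does not depend on the choice of $x\in C$. Hence $S\mapsto C_S$ is a bijection between the sheets of $\bar{St}(w)$ and the $\alpha$-conjugacy classes of size $>1$. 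Since a class has size $1$ exactly when it is a singleton inside $C(\alpha)$, the classes of size $>1$ are precisely the $p_\alpha$ productive and $n_\alpha$ nonproductive ones, giving $n_\alpha+p_\alpha$ sheets.

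The bookkeeping identities such as $(\alpha x)^{\alpha}=x\alpha$ are immediate, and the divisibility and size bounds are routine. The one point that needs a little care is showing that the local behaviour near $\sigma_1$ propagates around the entire rim of the sheet without collision, together with the injectivity of $S\mapsto C_S$; I expect this to be the main (if mild) obstacle, and it is exactly what Proposition~\ref{pro: closedstar} already supplies, since it guarantees that each sheet is a single $m_i$-disk with an $m_i$-circle link.
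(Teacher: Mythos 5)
Your argument is correct and follows essentially the same route as the paper: fix a face $[(x,1),(\alpha,1),(x\alpha,2)]$, compute the two adjacent faces through $w$, use $(\alpha x)^{\alpha}=x\alpha$ (and its iterates) to see that the link of a sheet alternates between the $\alpha$-orbit of $x$ and the $\alpha$-orbit of $\alpha x$, and then count sheets via $\alpha$-conjugacy classes of size $>1$. Your write-up is only slightly more explicit than the paper's (spelling out the equal orbit sizes via $\alpha^k\in C(x)\iff\alpha^k\in C(\alpha x)$ and the sheet--class bijection), but the underlying proof is the same.
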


\begin{defn}
Let $G$ be a nonabelian group and $v$ a vertex of $X(G)$. Let $s(v)$ be the number of sheets
the closed star of $v$ has and let $V$ denote the vertex set of $X(G)$. \\
(1) If $v=(\alpha,2)$ then $s(v)=\frac{n_{\alpha}}{2}+p_{\alpha}$.\\
(2) If $v=(\alpha,1)$ then $s(v)=n_{\alpha} + p_{\alpha}$. \\
(3) $\sum_{v \in V} s(v) = \sum_{\alpha \in G} (\frac{3n_{\alpha}}{2}+2p_{\alpha})$.
\end{defn}

We are now ready to present another formula for the Euler characteristic of $X(G)$.

\begin{thm}
Let $G$ be a nonabelian group then
$$
\chi(X(G))=2(|G|-|Z(G)|)-\sum_{\alpha \in G} (\frac{3n_{\alpha}}{2}+2p_{\alpha}) + \sum_{g \geq 0} (2-2g)m_g(G).
$$
Combining this with previous formulas for $\chi(X(G))$ yields the formula:
$$
\sum_{\alpha \in G}(3n_{\alpha}+4p_{\alpha}) + \sum_{g \geq 0} (4g-4)m_g(G) = |G|(|G|-c)
$$
where $c$ is the number of conjugacy classes of $G$.
\end{thm}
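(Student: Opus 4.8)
The plan is to obtain a third expression for $\chi(X(G))$ by comparing $X(G)$ with its desingularization $Y(G)$, and then to equate this with the simplex-count formula $\chi(X(G)) = 2(|G|-|Z(G)|) - \frac12(|G|^2-|G|c)$ already established above (via $\chi(X(G)) = V - E_1$ and Lemma~\ref{lem: centralizer sum}).

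First I would make precise the effect of desingularization on the Euler characteristic. By Proposition~\ref{pro: closedstar} the closed star of a vertex $v$ of $X(G)$ is a bouquet of exactly $s(v)$ disks, so removing $v$ from $X(G)$ produces exactly $s(v)$ punctures carrying the label $v$. Hence the natural map $Y(G)\to X(G)$ (the identity on $X(G)$ minus its vertices, sending each fill-in point labelled $v$ to $v$) is a quotient map that, for each vertex $v$, identifies a set of $s(v)$ points to a single point. Collapsing a $k$-element subset of a space to one point lowers the Euler characteristic by $k-1$, and performing this over all vertices (the point-sets being pairwise disjoint) gives
$$
\chi(X(G)) = \chi(Y(G)) - \sum_{v\in V}\bigl(s(v)-1\bigr) = \chi(Y(G)) + |V| - \sum_{v\in V}s(v).
$$
One could instead observe that $Y(G)$ admits a triangulation with the same faces and edges as $X(G)$ and with vertex set of size $\sum_{v}s(v)$, and run a bare simplex count; this bypasses any appeal to quotient Euler characteristics. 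Now substitute $|V| = 2(|G|-|Z(G)|)$ from the vertex count, $\sum_{v\in V}s(v) = \sum_{\alpha\in G}\bigl(\frac{3n_\alpha}{2}+2p_\alpha\bigr)$ from part (3) of the Definition preceding this theorem (central $\alpha$ have $n_\alpha=p_\alpha=0$ and contribute nothing), and $\chi(Y(G)) = \sum_{g\geq 0}(2-2g)m_g(G)$, since $Y(G)$ is a disjoint union of $m_g(G)$ closed oriented surfaces of genus $g$, each with Euler characteristic $2-2g$. This produces the first displayed formula.

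For the second formula I would simply equate this new expression with the simplex-count expression $\chi(X(G)) = 2(|G|-|Z(G)|) - \frac12(|G|^2-|G|c)$. The terms $2(|G|-|Z(G)|)$ cancel; moving the sum over $\alpha$ and the sum over $g$ to one side and clearing the factor $\frac12$ yields
$$
\sum_{\alpha\in G}(3n_\alpha+4p_\alpha) + \sum_{g\geq 0}(4g-4)m_g(G) = |G|(|G|-c),
$$
as claimed. The only step requiring real care is the desingularization bookkeeping — verifying that each vertex $v$ is split into precisely $s(v)$ points and that this accounts for exactly $s(v)-1$ in $\chi$ — but this is immediate from Proposition~\ref{pro: closedstar} together with the elementary behaviour of $\chi$ under finite point identifications; everything else is formal manipulation of formulas already in hand.
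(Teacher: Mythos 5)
Your proof is correct, and it reaches the key bookkeeping identity $\chi(X(G))=\chi(Y(G))+|V|-\sum_{v\in V}s(v)$ by a slightly different route than the paper. The paper reuses the auxiliary complex $Z(G)$ built in the proof of the structure theorem ($Y(G)$ with an $s(v)$-star graph attached for each vertex $v$), notes $\chi(Z(G))=\chi(Y(G))-\sum_v s(v)+|V|$ since each star contributes one new vertex and $s(v)$ new edges, and then invokes the homotopy equivalence $X(G)\simeq Z(G)$ to transfer the Euler characteristic. You instead work directly with the desingularization map $Y(G)\to X(G)$, viewing $X(G)$ as the quotient of $Y(G)$ in which, for each vertex $v$, the $s(v)$ fill-in points labelled $v$ are identified to one point (this is legitimate: the map is a continuous surjection from a compact space to a Hausdorff space, hence a quotient map, and the identified points are $0$-cells of a triangulation of $Y(G)$, so each identification drops $\chi$ by $s(v)-1$); your fallback — re-triangulating $Y(G)$ with the same edges and faces as $X(G)$ but $\sum_v s(v)$ vertices and doing a bare simplex count — is even more elementary and avoids any appeal to quotients or homotopy invariance. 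What the paper's route buys is economy within its own exposition, since $Z(G)$ and the equivalence $X(G)\simeq Z(G)$ are already in hand from Theorem~\ref{thm: structurethm}; what your route buys is self-containedness, needing only Proposition~\ref{pro: closedstar} (each vertex splits into exactly $s(v)$ points) and the combinatorial definition of $\chi$. The remaining substitutions — $|V|=2(|G|-|Z(G)|)$, $\sum_v s(v)=\sum_{\alpha}\bigl(\tfrac{3n_\alpha}{2}+2p_\alpha\bigr)$ with central $\alpha$ contributing nothing, $\chi(Y(G))=\sum_g(2-2g)m_g(G)$ — and the algebra equating this with $2(|G|-|Z(G)|)-\tfrac12(|G|^2-|G|c)$ to get the second display match the paper exactly.
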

\begin{proof}
Recall that $Y(G)$, the desingularization of $X(G)$ is the disjoint union of $m_g(G)$ Riemann
surfaces of genus $g$ over all $g \geq 0$. Thus $\chi(Y(G))=\sum_{g \geq 0} (2-2g)m_g(G)$.
Recall that $Z(G)$ was a complex obtained by attaching disjoint closed star graphs to $Y(G)$
where there was one such graph for each vertex in $X(G)$ and the type of star graph used for any given vertex depended on the number of sheets in the closed star of that vertex.
This means $Z(G)$ is obtained from $Y(G)$ by adding $|V|$ vertices where $V$ is the vertex set  of $X(G)$ and by adding $\sum_{v \in V} s(v)$ edges. This means
$\chi(Z(G)) = \chi(Y(G)) - \sum_{v \in V} s(v) + |V|$. As $X(G)$ is homotopy equivalent to $Z(G)$ we have $\chi(X(G))=\chi(Z(G))$. Combining this with the fact that $|V|=2(|G|-|Z(G)|)$ and
putting everything together we get the formula claimed for $\chi(X(G))$. Comparing this
to previous formulas for $\chi(X(G))$ yields the second formula immediately.

\end{proof}

\subsection{Functoriality}

A group homomorphism $f: G \to H$ is said to be injective on commutators
if the restriction $f: G' \to H$ is injective where $G'$ is the commutator subgroup of $G$.
It is easy to check that the composition of two homomorphisms which are injective on commutators is also injective on commutators. If $f: G \to H$ is such a homomorphism then
$g_1$ commutes with $g_2$ if and only if $f(g_1)$ commutes with $f(g_2)$ for all
$g_1, g_2 \in G$.

Let $\mathfrak{C}$ denote the category of finite groups and homomorphisms
which are injective on commutators. Let $\mathfrak{D}$ be category of finite oriented $2$-dimensional
simplicial complexes and orientation preserving simplicial maps. Here by an oriented
$2$-dimensional simplicial complex we mean one whose faces ($2$-simplices) have all been given an orientation and when we say a simplicial map preserves orientation we mean it takes
$2$-simplices to $2$-simplices in a manner that preserves orientation of the individual
faces.

\begin{pro}
The construction $X(G)$ is part of a covariant functor from the category $\mathfrak{C}$
of finite groups and homomorphisms injective on commutators to the category $\mathfrak{D}$
of finite oriented $2$-dimensional simplicial complexes and orientation preserving simplicial maps. In particular if $G_1$ is isomorphic to $G_2$
then $X(G_1)$ is simplicially isomorphic to $X(G_2)$ and if $H \leq G$ then $X(H)$ is a subcomplex of $X(G)$.
\end{pro}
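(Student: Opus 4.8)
The plan is to build, for each morphism $f\colon G\to H$ in $\mathfrak{C}$, a simplicial map $X(f)\colon X(G)\to X(H)$ by the obvious formula on vertices, $(x,1)\mapsto(f(x),1)$ and $(x,2)\mapsto(f(x),2)$, and then to verify the three things required: that this is well defined on the vertex set, that it carries simplices to simplices respecting orientation, and that it is functorial. A simplicial map need not be injective, so the hypothesis on $f$ is not needed for injectivity; where it is genuinely needed is well-definedness on vertices.

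First I would check well-definedness, which is the only essential use of the hypothesis: we must know $f(x)\in H-Z(H)$ whenever $x\in G-Z(G)$. If $x$ is noncentral in $G$, choose $g\in G$ with $[x,g]\neq e$; then $[x,g]\in G'$, so injectivity of $f$ on $G'$ gives $[f(x),f(g)]=f([x,g])\neq e$, whence $f(x)\notin Z(H)$ and $(f(x),i)$ is a vertex of $X(H)$. Recording the consequence used repeatedly below: since $[g_1,g_2]\in G'$ for all $g_1,g_2$, the same argument shows that $g_1$ and $g_2$ commute if and only if $f(g_1)$ and $f(g_2)$ commute.

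Next I would show $X(f)$ is an orientation-preserving simplicial map. An edge $[(x,1),(y,1)]$ (resp. $[(x,1),(y,2)]$) corresponds to noncommuting $x,y$; by the equivalence just noted $f(x),f(y)$ are noncommuting, hence in particular distinct, so their images form an edge of $X(H)$ of the same type. For a maximal face $[(x,1),(y,1),(xy,2)]$ the three group elements $x$, $y$, $xy$ pairwise fail to commute (as observed in Section~\ref{sec: complex}), so $f(x)$, $f(y)$, $f(x)f(y)=f(xy)$ do too and are three distinct vertices; thus $[(f(x),1),(f(y),1),(f(xy),2)]$ is a maximal face of $X(H)$, and it is listed with its two type-$1$ vertices before its type-$2$ vertex, i.e., with exactly the orientation that defines the faces of $X(H)$. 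Hence $X(f)$ is simplicial and orientation preserving.

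Functoriality is then essentially bookkeeping: $X(\mathrm{id}_G)$ is the identity on vertices and hence on $X(G)$, and since a composite of maps injective on commutators is again injective on commutators (noted in the text preceding this proposition), both $X(g\circ f)$ and $X(g)\circ X(f)$ are defined and send $(x,i)$ to $(g(f(x)),i)$, so they agree. Finally, an isomorphism $G_1\cong G_2$ and its inverse are injective, hence commutator-injective, so $X$ sends it to a simplicial isomorphism (whose inverse is the image of the inverse isomorphism); and if $H\le G$ the inclusion $\iota$ is injective, so $X(\iota)$ is injective on vertices, and, using the commuting equivalence once more, one checks that its image is precisely the full subcomplex of $X(G)$ spanned by the vertices $(x,i)$ with $x\in H$, realizing $X(H)$ as a subcomplex of $X(G)$. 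The only real content of the whole argument is the first step; I expect that to be the main (and only) obstacle, with everything afterward routine.
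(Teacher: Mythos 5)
Your proof is correct and follows essentially the same route as the paper: define $X(f)$ on vertices by $(x,i)\mapsto(f(x),i)$, use commutator-injectivity to see that noncommuting pairs (hence noncentral elements, edges, and oriented faces) are preserved, and check identities and compositions directly; your explicit well-definedness check is a detail the paper leaves implicit. One small caveat: for $H \le G$ the image of $X(\iota)$ is the full subcomplex spanned by the vertices $(x,i)$ with $x \in H - Z(H)$, not all $x \in H$ (elements of $Z(H)\setminus Z(G)$ give vertices of $X(G)$ outside the image), but this does not affect the conclusion that $X(H)$ is a subcomplex of $X(G)$.
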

\begin{proof}
We have already described $X$ on the level of objects so let $f: G \to H$ be a homomorphism
injective on commutators. For vertices (either of type 1 or type 2) define
$X(f)((v,i))=(f(v),i)$ for $i=1,2$. As $f$ takes noncommuting elements to noncommuting
elements, it takes edges of $X(G)$ to those of $X(H)$. If $[(x,1), (y,1), (xy,2)]$ is an oriented
face of $X(G)$, then \\ $[(f(x),1),(f(y),1),(f(xy),2)]$ is an oriented face of $X(H)$ as
$f(xy)=f(x)f(y)$. Thus $X(f)$ defines an orientation preserving simplicial map between
$X(G)$ and $X(H)$. It is now easy to check that $X$ respects compositions and identity maps
and defines a covariant functor from $\mathfrak{C}$ to $\mathfrak{D}$ as desired. The rest
follows readily.
\end{proof}

\begin{cor}
For $G$ a finite nonabelian group, $Aut(G)$ acts on $X(G)$ through orientation preserving
simplicial automorphisms. In particular $G$ acts on $X(G)$ simplicially by conjugation.
Furthermore an anti-automorphism of $G$ like $\theta(g)=g^{-1}$ induces an orientation
reversing simplicial automorphism of $X(G)$.
\end{cor}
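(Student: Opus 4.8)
The plan is to derive the first two assertions directly from the functoriality Proposition just proved, and to handle the anti-automorphism $\theta(g)=g^{-1}$ separately by hand, since it is not a morphism in $\mathfrak{C}$.

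\textbf{Action of $Aut(G)$.} Any $\phi\in Aut(G)$ is a bijective homomorphism, hence injective on commutators, so it is an isomorphism in $\mathfrak{C}$, and its inverse $\phi^{-1}$ is as well. Since $X$ is a covariant functor $\mathfrak{C}\to\mathfrak{D}$, it sends $\phi$ to an orientation-preserving simplicial map $X(\phi)\colon X(G)\to X(G)$; functoriality gives $X(\phi^{-1})\circ X(\phi)=X(\phi^{-1}\phi)=X(\operatorname{id}_G)=\operatorname{id}_{X(G)}$ and symmetrically, so $X(\phi)$ is an orientation-preserving simplicial automorphism. The relations $X(\phi\psi)=X(\phi)X(\psi)$ and $X(\operatorname{id}_G)=\operatorname{id}$ then say that $\phi\mapsto X(\phi)$ is a group homomorphism from $Aut(G)$ to the group of orientation-preserving simplicial automorphisms of $X(G)$; this is the asserted action. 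Restricting it along $G\to Inn(G)\le Aut(G)$, $g\mapsto c_g$ with $c_g(x)=gxg^{-1}$, yields the conjugation action of $G$, which on vertices sends $(x,i)$ to $(gxg^{-1},i)$.

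\textbf{The anti-automorphism.} Define $\Theta\colon X(G)\to X(G)$ on vertices by $\Theta(x,i)=(x^{-1},i)$ for $i=1,2$. I would check: (i) $\Theta$ is simplicial, since $x$ and $y$ commute iff $x^{-1}$ and $y^{-1}$ commute (so edges go to edges), and a face $[(x,1),(y,1),(xy,2)]$ has vertex set carried to $\{(x^{-1},1),(y^{-1},1),((xy)^{-1},2)\}=\{(y^{-1},1),(x^{-1},1),(y^{-1}x^{-1},2)\}$, which is the vertex set of the face $[(y^{-1},1),(x^{-1},1),(y^{-1}x^{-1},2)]$ of $X(G)$ (the product of the two type-$1$ entries, in that order, being $y^{-1}x^{-1}$); (ii) $\Theta$ is an involution, hence a simplicial automorphism; (iii) $\Theta$ reverses orientation, because the ordered face $[(x,1),(y,1),(xy,2)]$ is sent to the ordered triple $((x^{-1},1),(y^{-1},1),(y^{-1}x^{-1},2))$, whereas the chosen orientation of that target simplex in $X(G)$ is $[(y^{-1},1),(x^{-1},1),(y^{-1}x^{-1},2)]$, and the two differ by the transposition of the first two vertices, an odd permutation.

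\textbf{Main point.} The only subtlety, and the step to get right, is (i)/(iii): it is precisely the fact that $\theta$ is an \emph{anti}-automorphism (not an automorphism) that makes the third vertex of the image come out as $y^{-1}x^{-1}$ rather than $x^{-1}y^{-1}$ — which on the one hand keeps the image a genuine face of $X(G)$, and on the other hand is exactly the vertex transposition that flips every face orientation. (More generally, any anti-automorphism factors as $\theta=\iota\circ\phi$ with $\iota(g)=g^{-1}$ and $\phi\in Aut(G)$, so it induces $\Theta\circ X(\phi)$, still orientation-reversing; but only the case $\theta(g)=g^{-1}$ is claimed.)
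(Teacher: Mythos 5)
Your proposal is correct and matches the paper's (essentially unwritten) argument: the first two claims are immediate from the functoriality proposition applied to automorphisms and inner automorphisms, and the inversion map is checked directly on vertices, with the anti-homomorphism property $(xy)^{-1}=y^{-1}x^{-1}$ ensuring faces map to faces with the first two vertices transposed, hence orientation reversed. No gaps.
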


The covariant functor $X$ induces another functor $Y$, the desingularization of $X$.
We have already described the desingularization $Y(G)$ on the level of objects.
For a homomorphism $f: G \to H$ which is injective on commutators, we have already defined an orientation preserving simplicial map $X(f): X(G) \to X(H)$. As such a map takes faces to faces,
it is easy to see that it takes pseudomanifold components to pseudomanifold components and induces a continuous map $X(G)-\{ \text{vertices}\} \to X(H)-\{ \text{vertices} \}$ which
takes punctures to punctures. As each puncture arises from a unique sheet of the closed star
of a unique vertex, or equivalently from a unique circle of the link of a unique vertex, to see if there is a well-defined continuous extension of $X(f)$ to a simplicial map $Y(G) \to Y(H)$, we need only note that each circle in a link of the vertex $(v,i)$ must map to a unique circle
in the link of the vertex $(f(v),i)$ under $X(f)$. We then define $Y(f)$ in such a way as to map
the puncture associated to  a particular circle in the link of $(v,i)$ in $X(G)$ to the puncture
associated to the circle in the link $(f(v),i)$ in $X(H)$ which $X(f)$ takes the first circle to.

After doing this, it is easy to check $Y(f)$ is a well-defined orientation preserving simplicial map
from $Y(G)$ to $Y(H)$ and that the construction preserves compositions and identity maps. Thus we have proven:

\begin{pro}
The construction $Y(G)$ is part of a covariant functor $Y$ from $\mathfrak{C}$ to
$\mathfrak{D}$.
\end{pro}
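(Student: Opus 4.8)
The plan is to show that the assignment $f \mapsto Y(f)$ is well-defined as a simplicial map and then that it respects composition and identities, using the already-established functoriality of $X$ as a black box. First I would recall the situation set up just before the statement: a morphism $f\colon G\to H$ in $\mathfrak{C}$ gives an orientation-preserving simplicial map $X(f)\colon X(G)\to X(H)$; since $X(f)$ sends faces to faces sharing edges to faces sharing edges, it carries pseudomanifold components into pseudomanifold components, and it restricts to a continuous map on the complements of the vertex sets. The key combinatorial fact to isolate is that, by Proposition~\ref{pro: closedstar} together with Propositions~\ref{pro: star1} and~\ref{pro: star2}, the punctures of $X(G)-\{\text{vertices}\}$ are in canonical bijection with pairs (vertex $(v,i)$, sheet of $\bar{St}(v,i)$), equivalently with pairs (vertex, circle component of $Lk(v,i)$). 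So I would first argue that $X(f)$ maps each link circle of $(v,i)$ into the link of $(f(v),i)$, landing inside a single one of its circle components: this is because $X(f)$ is simplicial, sends $\bar{St}(v,i)$ into $\bar{St}(f(v),i)$, and a connected simplicial image of a circle in a disjoint union of circles must lie in one of them.

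Second, with that bijection and the "circle goes to circle" observation in hand, I would define $Y(f)$ on punctures by sending the puncture attached to a given link-circle $c$ of $(v,i)$ in $X(G)$ to the puncture attached to the link-circle of $(f(v),i)$ in $X(H)$ that contains $X(f)(c)$, and on the rest of $Y(G)$ by $X(f)$ (which is already defined off the vertices). Then I would check this is continuous and simplicial: near a puncture the surface $Y(G)$ looks like a triangulated disk (the $m$-disk filling in the sheet, re-triangulated as in the later functorial triangulation, or more simply one checks continuity of the extension directly since a neighborhood of the puncture is a punctured disk mapping to a punctured disk), so the extension is forced and continuous; it is orientation preserving because $X(f)$ is and filling in punctures does not disturb orientations.

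Third, I would verify the functor axioms. For the identity $\mathrm{id}_G$ we have $X(\mathrm{id}_G)=\mathrm{id}_{X(G)}$, which fixes every link circle, hence $Y(\mathrm{id}_G)=\mathrm{id}_{Y(G)}$. For composition, given $f\colon G\to H$ and $h\colon H\to K$ in $\mathfrak{C}$ (recall composites of commutator-injective maps are commutator-injective, as noted in the excerpt), $X(h\circ f)=X(h)\circ X(f)$, so on a link circle $c$ of $(v,i)$ the circle $X(h\circ f)(c)=X(h)(X(f)(c))$ lies in the circle of $Lk((hf)(v),i)$ obtained by first following $X(f)$ then $X(h)$; tracing the puncture definition through both steps shows $Y(h\circ f)=Y(h)\circ Y(f)$. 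This is bookkeeping once the puncture-to-circle dictionary is fixed.

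The main obstacle is the well-definedness step: one must be confident that $X(f)$ really does carry each link circle of $(v,i)$ into a \emph{single} circle of $Lk(f(v),i)$ rather than across several, and that it does so compatibly with the gluing data that defines $Y$. The cleanest way to nail this down is the connectedness argument above — a link circle is connected, $X(f)$ is continuous, and $Lk(f(v),i)$ is a disjoint union of circles — together with the fact (needed so that the puncture actually corresponds to \emph{exactly one} sheet) from Proposition~\ref{pro: closedstar} that distinct sheets of a closed star are disjoint except at the center. I would also remark that $X(f)$ need not be injective, so a circle may wrap a target circle multiple times; this is harmless since we only need the image to determine a well-defined target puncture, not a homeomorphism. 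Everything else is routine, and I would state the conclusion as in the excerpt: $Y$ is a covariant functor from $\mathfrak{C}$ to $\mathfrak{D}$.
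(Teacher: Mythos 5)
Your proposal follows essentially the same route as the paper: identify each puncture with a unique sheet/link-circle of a unique vertex, observe that $X(f)$ carries each link circle of $(v,i)$ into a single circle of the link of $(f(v),i)$, define $Y(f)$ on the filled-in puncture points accordingly, and deduce the functor axioms from those of $X$. Your connectedness argument for the "single target circle" step (and the remark that $X(f)$ is non-degenerate, so links map into links) simply makes explicit what the paper asserts tersely, so the two arguments coincide in substance.
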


From this it follows that if $H$ is a subgroup of $G$, then the manifold components of
$Y(H)$ are a subset of those of $Y(G)$. Thus $m_g(H) \leq m_g(G)$ for every genus $g$.
Thus we have proven:

\begin{cor}[Monotonicity]
Let $H \leq G$ be finite non-abelian groups, then $m_g(H) \leq m_g(G)$ for every genus $g \geq 0$.
\end{cor}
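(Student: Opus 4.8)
The plan is to deduce this directly from the functoriality of the desingularization construction $Y$, the only real work being to check carefully that a subgroup inclusion induces an inclusion of surface components with matching genera, so that the counting function $m_g$ can only increase.

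First I would observe that the inclusion homomorphism $\iota\colon H \hookrightarrow G$ is injective, hence a fortiori injective on commutators, so it is a morphism in $\mathfrak{C}$. By the functoriality proposition, $X(\iota)$ realizes $X(H)$ as a subcomplex of $X(G)$: the vertices $(x,i)$ of $X(H)$ (noting that any $x\in H$ central in $G$ is already central in $H$, so $H-Z(H)\subseteq G-Z(G)$) are among those of $X(G)$, and the faces $[(x,1),(y,1),(xy,2)]$ of $X(H)$ are literally faces of $X(G)$.

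The key step is to show that the face set of $X(H)$ is a union of the edge-equivalence classes of $X(G)$, i.e.\ of pseudomanifold components of $X(G)$. For this I would take any face $\sigma$ of $X(H)$ and any of its three edges $e$; since $e$ has both endpoints in $X(H)$, the explicit description of the two faces containing a given edge---namely $[(x,1),(y,1),(xy,2)]$ and $[(y,1),(x,1),(yx,2)]$ for an edge $[(x,1),(y,1)]$, and the analogous pair $[(x,1),(x^{-1}\alpha,1),(\alpha,2)]$, $[(\alpha x^{-1},1),(x,1),(\alpha,2)]$ for a mixed edge $[(x,1),(\alpha,2)]$---shows that \emph{both} faces incident to $e$ already lie in $X(H)$, because the relevant products $xy$, $yx$, $x^{-1}\alpha$, $\alpha x^{-1}$ all lie in $H$. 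Hence edge-adjacency of faces never leaves the face set of $X(H)$, so each edge-equivalence class of $X(G)$ lies either entirely inside or entirely outside $X(H)$; consequently every pseudomanifold component of $X(H)$ \emph{is} a pseudomanifold component of $X(G)$, with the identical set of faces.

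Finally I would note that the desingularization of a pseudomanifold component $P$ is intrinsic to $P$: by Proposition~\ref{pro: closedstar} the closed star of a vertex $v$ within $P$ is a disk bouquet determined by the faces of $P$ alone, so the punctures filled in to form the Riemann surface, and hence its genus, depend only on $P$ and not on the ambient complex. Therefore each surface component of $Y(H)$ coincides, as a surface, with a surface component of $Y(G)$ of the same genus, and distinct components of $Y(H)$ give distinct components of $Y(G)$; counting components of each genus then yields $m_g(H) \le m_g(G)$ for all $g\ge 0$. I expect the only delicate point to be the verification that the two faces incident to an edge are the same whether computed in $X(H)$ or in $X(G)$---so that the pseudomanifold components match up exactly rather than the ones in $X(H)$ being mere fragments of larger ones in $X(G)$---together with the observation that desingularization is a purely local, intrinsic operation carried out one component at a time.
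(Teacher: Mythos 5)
Your argument is correct and follows essentially the same route as the paper, which deduces monotonicity directly from the functoriality of $Y$ applied to the inclusion $H \hookrightarrow G$ (a morphism in $\mathfrak{C}$), so that the components of $Y(H)$ sit among those of $Y(G)$. Your write-up merely fills in the details the paper leaves implicit, in particular the observation that both faces of $X(G)$ incident to an edge of $X(H)$ already lie in $X(H)$, so pseudomanifold components of $X(H)$ are full components of $X(G)$ and their intrinsic desingularizations (and genera) agree.
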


The next corollary follows from monotonicity as every finite group is a subgroup of a symmetric group
$\Sigma_n$. It shows that a particular genus $g$ surface can occur
as a component of $Y(G)$ for finite groups if and only if it can occur for symmetric groups.

\begin{cor} If $m_g(G) > 0$ for some genus $g \geq 0$ and finite non-abelian group $G$
then $m_g(\Sigma_n) > 0$ for some $n \geq 3$.
\end{cor}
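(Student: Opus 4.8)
The plan is to reduce this immediately to the Monotonicity Corollary via Cayley's theorem. First I would recall that every finite group $G$ embeds into the symmetric group $\Sigma_{|G|}$ through the left regular representation $g \mapsto (x \mapsto gx)$. This homomorphism is injective, hence in particular injective on commutators, so it is a morphism in the category $\mathfrak{C}$ and exhibits $G$ as a subgroup of $\Sigma_{|G|}$.

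Next I would verify that the hypotheses of the Monotonicity Corollary are satisfied. Since $G$ is assumed non-abelian we have $|G| \geq 6$, so setting $n := |G|$ gives $n \geq 3$ and moreover $\Sigma_n$ is itself non-abelian. Thus $G \leq \Sigma_n$ is an inclusion of finite non-abelian groups, and the Monotonicity Corollary yields $m_g(G) \leq m_g(\Sigma_n)$ for every genus $g \geq 0$.

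Finally, combining this with the hypothesis $m_g(G) > 0$ gives $m_g(\Sigma_n) \geq m_g(G) > 0$ for this particular $n = |G| \geq 3$, which is exactly the assertion.

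There is no real obstacle here: the whole argument is the classical Cayley embedding together with the functoriality and monotonicity already established in the preceding subsection. The only point deserving a word of care is checking that $\Sigma_n$ is non-abelian — needed because the Monotonicity Corollary is stated for non-abelian groups on both sides — and this is immediate from $n = |G| \geq 6$.
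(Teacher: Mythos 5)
Your argument is correct and is essentially the paper's own: the corollary is deduced from the Monotonicity Corollary together with Cayley's embedding of $G$ into a symmetric group, with your checks that $n=|G|\geq 6$ and that the embedding is injective (hence injective on commutators) being the only, entirely routine, added details.
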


We also record the important fact that every component of $X(G)$ originates from a component of $X(H)$ where
$H \leq G$ is a non-abelian subgroup generated by 2 elements. In this regard much like Coxeter's hyperbolic Cayley graph construction,
the Riemann surfaces that make up
$X(G)$ originate from various $2$-generated subgroups. However the triangulations on these surfaces and method of generation seems to differ greatly so it is unclear what the exact relationship is between the two constructions.

\begin{cor}
Let $G$ be a finite non-abelian group, then $X(G) = \cup_{H \in \mathfrak{A}} X(H)$ where $\mathfrak{A}$ denotes the collection of subgroups of
$G$ which are non-abelian and generated by $2$-elements.
\end{cor}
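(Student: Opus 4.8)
The plan is to prove the two inclusions $\bigcup_{H \in \mathfrak{A}} X(H) \subseteq X(G)$ and $X(G) \subseteq \bigcup_{H \in \mathfrak{A}} X(H)$ separately, the first being immediate from functoriality and the second resting on a single observation about faces.

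For the inclusion $\bigcup_{H \in \mathfrak{A}} X(H) \subseteq X(G)$, I would invoke the functoriality proposition already established. For any subgroup $H \leq G$ the inclusion $H \hookrightarrow G$ is injective, hence trivially injective on commutators, so it is a morphism in $\mathfrak{C}$; the proposition then tells us that $X(H)$ is realized as a subcomplex of $X(G)$ (via $X$ applied to this inclusion, which on vertices is $(v,i) \mapsto (v,i)$ and on faces is $[(x,1),(y,1),(xy,2)] \mapsto [(x,1),(y,1),(xy,2)]$, preserving orientation). Taking the union over all $H \in \mathfrak{A}$ gives the claim at once, and this same description shows the abstract face $[(x,1),(y,1),(xy,2)]$ of $X(H)$ is literally the like-named face of $X(G)$.

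For the reverse inclusion I would first reduce to faces: since every vertex of $X(G)$ lies in an edge and every edge lies in exactly two faces (Proposition~\ref{basic properties}), the space $X(G)$ is the union of its closed $2$-simplices, so it suffices to show that each face of $X(G)$ occurs as a face of $X(H)$ for some $H \in \mathfrak{A}$. Take a face $\sigma = [(x,1),(y,1),(xy,2)]$, so $x$ and $y$ do not commute, and set $H = \langle x, y \rangle$. Then $H$ is generated by two elements and is non-abelian (as $x,y \in H$ fail to commute), so $H \in \mathfrak{A}$. The one point requiring care is that $x$, $y$, and $xy$ must all be noncentral \emph{in} $H$ for $(x,1),(y,1),(xy,2)$ to be vertices of $X(H)$ and $\sigma$ a genuine face of $X(H)$: since $x$ and $y$ do not commute and both lie in $H$, neither is central in $H$; and $x(xy) = (xy)x$ would force $xy = yx$, so $xy$ does not commute with $x \in H$ and is noncentral in $H$ as well. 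Hence $\sigma$ is a face of $X(H)$, and by the first inclusion $\sigma \subseteq X(H) \subseteq \bigcup_{H \in \mathfrak{A}} X(H)$.

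I do not expect a serious obstacle; the only subtlety is the verification that a product $xy$ of two noncommuting elements remains noncentral in the subgroup they generate, handled by the one-line commutator computation above, together with the bookkeeping remark that the subcomplex $X(H) \subseteq X(G)$ supplied by functoriality identifies faces with matching labels and orientations. Combining the two inclusions yields $X(G) = \bigcup_{H \in \mathfrak{A}} X(H)$, as desired.
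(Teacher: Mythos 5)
Your argument is correct, but it takes a slightly different route from the paper's. The paper works component-by-component: starting from one triangle $[(x,1),(y,1),(xy,2)]$, it shows by induction over adjacent triangles that every triangle of the pseudomanifold component containing it has all its vertex labels in $H=\langle x,y\rangle$, and then uses the fact that $X(G)$ is the union of its pseudomanifold components. You instead work face-by-face: each individual face lies in $X(\langle x,y\rangle)$ for its own defining pair $x,y$, and $X(G)$ is the union of its closed $2$-simplices by Proposition~\ref{basic properties}. Your route is more elementary (no induction over adjacency is needed), and your noncentrality check for $x$, $y$, $xy$ in $H$, together with the explicit appeal to functoriality for the inclusion $X(H)\subseteq X(G)$ (noting that an element noncentral in $H$ is noncentral in $G$), is exactly the right bookkeeping, so the stated equality follows just as well. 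What the paper's argument buys beyond this is the stronger fact asserted in the sentence preceding the corollary: each whole pseudomanifold component of $X(G)$ is contained in $X(H)$ for a \emph{single} $2$-generated non-abelian subgroup $H$, so the surfaces of $X(G)$ ``originate'' from $2$-generated subgroups; your face-level argument yields only the set-theoretic union and would need the same adjacency induction to recover that refinement.
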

\begin{proof}
Starting from a triangle in $X(G)$, say $[(x,1), (y,1), (xy,2)]$ it is easy to verify by induction that all triangles in the pseudomanifold component which contains that triangle have vertices which lie in the subgroup $H=<x,y>$ which is generated by $x$ and $y$. As $x$ and $y$ don't commute, $H$ is non-abelian and
generated by $2$-elements. As $X(G)$ is the union of its pseudomanifold components, the corollary is proved.
\end{proof}

\subsection{Abstract 3-polytope structure of components}

In this section, we discuss a modification of the cell structure of the triangulated Riemann surfaces that occur as components of $Y(G)$ and show this cell structure satisfies the conditions of what is called an abstract $3$-polytope in the combinatorics literature. Fix a nonabelian finite group $G$ in this section.

Recall, we have seen that the components of the desingularization $Y(G)$ are triangulated Riemann surfaces. The closed star of a type 2 vertex in this surface is a single sheet which is simplicially isomorphic to some simplicial $m$-disk.

Let $\hat{Y}(G)$ be the same collection of Riemann surfaces in $Y(G)$ except that the cell-structure is modified as follows. For each component, all type 2-vertices and edges joining type 1 to type 2 vertices are erased resulting in a new cell structure for the component where the vertices consist of only the type 1 vertices of the original construction, the edges only those joining type 1 vertices in the original construction and where the faces are polygonal consisting
of the sheet about a type 2 vertex in the original construction with its nonboundary vertices and edges erased. Topologically we are not changing the component at all, we are just changing the cell structure. The final cell structure will be called the $2$-cell structure of the component. It is no longer a triangulation but does give the component the structure of a $CW$-complex. This cell structure forms a ``closed cell structure", i.e., the closed cells are homeomorphic to $2$-disks and no boundary identifications occur between points on the boundary of the same face.

We first show that all $2$-sheets in a given component of $Y(G)$ are simplicially isomorphic
and hence all the $2$-faces in the corresponding cell structure of a given component of $\hat{Y}(G)$ consist of the same type of polygonal face.

\begin{thm} Let $G$ be a nonabelian finite group and let $T$ be the unique component of $Y(G)$ which contains the triangle $[(x,1), (y,1), (xy,2)]$. Then all type 1-vertices in $T$ are conjugate to either $x$ or $y$ and all type 2-vertices in $T$ are conjugates of $xy$. Furthermore all sheets about type $2$-vertices in $T$ are simplicially isomorphic via a conjugation and hence are $n$-gons for the same $n \geq 3$. These conjugations are by elements in the subgroup $H$ generated by $x$ and $y$.

In the corresponding $2$-cell structure, all vertices are conjugate to (and hence have the same valency as) either $(x,1)$ or $(y,1)$. All faces are $n$-gons for the same $n$. All oriented edges are conjugate to the edge $[(x,1),(y,1)]$ or the edge $[(y,1),(y^{-1}xy,1)]$ and all unoriented
edges are conjugate. Thus the cell automorphism group of the component is both face and edge transitive.
\label{thm: conjugate1}
\end{thm}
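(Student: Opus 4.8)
The plan is to leverage the description of sheets about type 2 vertices worked out in Proposition~\ref{pro: star2} together with the observation (from the analysis preceding it) that within a sheet, triangles two apart differ by conjugation by $\alpha^{-1}$, where $\alpha$ is the type 2 vertex. First I would set up notation: let $H = \langle x, y\rangle$, which is nonabelian since $x,y$ do not commute, and recall from the last corollary of the Functoriality subsection that the entire component $T$ containing $[(x,1),(y,1),(xy,2)]$ has all its vertices in $H$. The key structural fact is that the $2$-simplices of $T$ are connected by edge-adjacency moves, and I would track how the conjugacy class labels of vertices propagate under these moves.

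The main step is to show the vertex-label invariant is preserved under passing to an edge-adjacent face. Starting from a face $[(a,1),(b,1),(ab,2)]$, its three edge-neighbors are: $[(b,1),(a^{ab},1),(ab,2)]$ (sharing the edge $[(b,1),(ab,2)]$), $[(b^{-1}ab\cdot\text{-type corrections},\ldots)]$ — more carefully, the two faces sharing the $E_1$-edge $[(a,1),(b,1)]$ are $[(a,1),(b,1),(ab,2)]$ and $[(b,1),(a,1),(ba,2)]$, and the two faces sharing $[(a,1),(ab,2)]$ and $[(b,1),(ab,2)]$ are obtained (as in the computation before Proposition~\ref{pro: star2}) by replacing a type 1 vertex with a conjugate of the other type 1 vertex by $ab$ or $(ab)^{-1}$, while $ba = a^{-1}(ab)a$ is conjugate to $ab$. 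In every case: type 2 vertices stay in the conjugacy class of $xy$ (they get conjugated within $H$), and type 1 vertices stay in the union of the $H$-conjugacy classes of $x$ and of $y$. Since $T$ is generated from the initial triangle by such moves (path-connectivity of the pseudomanifold component), induction on the length of the edge-path gives the first assertion, with all conjugating elements lying in $H$.

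Next I would prove the sheet-isomorphism claim. By Corollary~\ref{cor: star2}(1)/Proposition~\ref{pro: star2}, a sheet about a type 2 vertex $(\gamma,2)$ in $T$ is determined by an unordered pair $\{x', y'\}$ of type 1 vertices with $x'y' = \gamma$; it is an $n$-disk where $n$ is governed by the $\gamma$-conjugacy data. Given two type 2 vertices $\gamma_1 = g_1 (xy) g_1^{-1}$ and $\gamma_2 = g_2(xy)g_2^{-1}$ with $g_i \in H$ (using the first part), conjugation by $g_2 g_1^{-1}$ is a simplicial automorphism of $X(H) \subseteq X(G)$ (by the functoriality corollary: inner automorphisms act simplicially and orientation-preservingly) carrying the sheet about $(\gamma_1,2)$ to a sheet about $(\gamma_2,2)$; since a type 2 vertex has a unique sheet structure determined by its closed star, and conjugation preserves orientation, this is a simplicial isomorphism of the two sheets. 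Hence all sheets in $T$ are simplicially isomorphic, so are $n$-gons for one fixed $n \geq 3$ (the lower bound $n\geq 3$ from Proposition~\ref{pro: closedstar}).

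Finally, the statements about the $2$-cell structure are essentially translations. Vertices of $\hat{Y}(G)$-cell structure of $T$ are exactly the type 1 vertices of $T$, which by the above split into the conjugacy classes of $(x,1)$ and $(y,1)$; conjugation is a cell-automorphism, so it acts transitively on each class, and since conjugate vertices have the same valency (valency $= n_\alpha + p_\alpha$ by Proposition~\ref{pro: star1}, a conjugation-invariant quantity), we get at most two valencies. Faces are the sheets-with-interior-erased, all $n$-gons by the previous paragraph, and the conjugation action (transitive on type 2 vertices of $T$) is transitive on faces. For edges: an (unoriented) $E_1$-edge of $T$ is $[(a,1),(b,1)]$ with $\{a,b\}$ a noncommuting pair generating a subgroup; tracking the edge-adjacency moves as in the first step shows every such edge is $H$-conjugate to $[(x,1),(y,1)]$, giving edge-transitivity, while the two orientations $[(x,1),(y,1)]$ and its reverse (which equals, up to conjugacy, $[(y,1),(y^{-1}xy,1)]$ since the reversed edge $[(y,1),(x,1)]$ sits in the face $[(y,1),(x,1),(yx,2)]$ and $x = y(y^{-1}xy)y^{-1}$) account for the oriented-edge orbits. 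I expect the main obstacle to be the careful bookkeeping in the first step — verifying that the union of the two conjugacy classes is genuinely closed under all three types of edge-adjacency moves, and that the conjugating elements can always be taken in $H$ — since a sign error there would break both the face/vertex and the edge transitivity conclusions; everything after that is a clean consequence of functoriality applied to inner automorphisms.
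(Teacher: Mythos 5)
Your first step (propagating the conjugacy-class labels of vertices along chains of edge-adjacent triangles, with all conjugators in $H$) is essentially the paper's own argument and is sound. The genuine gap is in your sheet-isomorphism step. Conjugation by $g_2g_1^{-1}$ matches the group labels of the two central type-2 vertices, but it is a simplicial automorphism of all of $Y(G)$: it permutes components, and it sends your sheet $S_1$ to \emph{some} sheet about \emph{some} vertex labelled $(\gamma_2,2)$ --- not necessarily the sheet $S_2$ you want to compare with, and not necessarily even a sheet lying in $T$. This matters because the closed star of $(\gamma_2,2)$ in $X(G)$ is in general a bouquet of several disks of possibly different sizes (by Proposition~\ref{pro: star2}, different $\gamma_2$-conjugacy classes give sheets with $2\ell$ or $\ell$ triangles for varying $\ell$), and after desingularization a single component can even contain several type-2 vertices carrying the same group label (the paper notes this explicitly in the extraspecial example). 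So ``a sheet about $(\gamma_2,2)$'' does not have a well-defined isomorphism type at this stage, and your parenthetical ``a type 2 vertex has a unique sheet structure determined by its closed star'' is precisely what fails before the theorem is proved. The same defect infects your face- and vertex-transitivity conclusions: to show the cell automorphism group \emph{of the component} is transitive you must produce automorphisms of $T$ itself, i.e.\ you must check that the conjugations you invoke carry $T$ back to $T$, which you never do.

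The paper closes this gap locally rather than globally: conjugation by $x^{-1}$ sends the triangle $[(x,1),(y,1),(xy,2)]$ to $[(x,1),(x^{-1}yx,1),(yx,2)]$, which shares an edge with the triangle $[(y,1),(x,1),(yx,2)]$ of $T$; hence this particular conjugation preserves $T$ and carries the sheet about $(xy,2)$ onto the \emph{adjacent} sheet about $(yx,2)$. Since any two sheets of $T$ are joined by a chain of adjacent sheets, iterating gives the sheet claim, and a similar explicit check (conjugation by $y^{-1}$ preserves $T$, together with rotation by the sheet's central element) gives edge transitivity. Your outline can be repaired along exactly these lines --- once conjugation by $x^{-1}$ and by $y^{-1}$ are each shown to map a triangle of $T$ into $T$, conjugation by every element of $H$ preserves $T$ and your one-step argument becomes legitimate --- but as written the passage from ``the labels are conjugate'' to ``the two given sheets of $T$ are isomorphic and the component's automorphism group is face and edge transitive'' is not justified.
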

\begin{proof}
Let $\alpha=xy$. A quick calculation shows that the three triangles adjacent to $[(x,1),(y,1),(\alpha,2)]$ in $T$ are
$[(y,1),(x^{\alpha},1),(\alpha,2)], [(y,1),(x,1), (\alpha^{x},2)]$ and $[(xyx^{-1},1),(x,1),(\alpha,2)]$.
The vertices in these 3 triangles are conjugate to those in the original triangle using conjugation by elements in the group $H$ generated by $x$ and $y$. As every triangle in the component can
be joined to the original one by a sequence of adjacent triangles, and the union of the triangles is the component, the conjugacy result for vertices follows.

Now consider the sheet about $(\alpha,2)$ which contains the triangle \\ $[(x,1),(y,1),(\alpha,2)]$
and the adjacent sheet that shares the edge $[(x,1),(y,1)]$ i.e. the sheet about $(yx,2)$ which contains the
triangle $[(y,1),(x,1),(yx,2)]$. Conjugating the triangle $[(x,1),(y,1),(\alpha,2)]$ by $x^{-1}$ and using functoriality yields a simplicial isomorphism of $Y(G)$ which takes the triangle to the one
$[(x,1),(x^{-1}yx,1), (yx,2)]$ which is adjacent to $[(y,1),(x,1),(yx,2)]$. From this it follows that this simplicial automorphism must take the component $T$ back to itself and the sheet about
$(\alpha=xy,2)$ to the one about $(yx,2)$ in the same component. In particular two adjacent sheets in a given component are conjugate and hence simplicially isomorphic. As any sheets about type 2-vertices in a given component can be joined by a sequence of adjacent sheets, the result on sheets follows.

To get the result on edges, first note every edge is conjugate to one that lies in the original sheet. Then previous results show that under conjugation by the center element of the sheet there are at most two orbits of oriented edges depending if the conjugacy classes $x$ and $y$ are a pair of nonproductive $\alpha$-conjugacy classes or a single productive one. These (unoriented) edges are represented by orbit representatives $[(x,1),(y,1)]$
and $[(y,1), (y^{-1}xy,1)]$ in the case of two $\alpha$-orbits. Conjugation by $y^{-1}$ takes the first edge to the second (with orientation flipped) and is easily seen to induce an automorphism of the component. Thus the cell automorphism group of the component is (unoriented) edge transitive and the theorem is proven.

\end{proof}

\begin{cor}
Let $G$ be a finite nonabelian group then the number of components of $Y(G)$ is greater or equal to the number of conjugacy classes of noncentral elements in $G$.
\end{cor}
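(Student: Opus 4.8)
The plan is to build a surjection from the set of components of $Y(G)$ onto the set of conjugacy classes of noncentral elements of $G$ and then simply count. Once such a surjection is in hand, and since $Y(G)$ has only finitely many components, the inequality follows immediately from the pigeonhole principle (a surjection from a finite set $A$ onto $B$ forces $|A| \geq |B|$).

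First I would define the map. Given a component $T$ of $Y(G)$, note $T$ is a union of faces and every face of $X(G)$ (hence of each pseudomanifold component, hence of the corresponding surface in $Y(G)$) is of the form $[(x,1),(y,1),(xy,2)]$, so $T$ contains at least one type $2$-vertex. By Theorem~\ref{thm: conjugate1}, if $[(x,1),(y,1),(xy,2)]$ is any face of $T$ then every type $2$-vertex of $T$ is a conjugate of $xy$; in particular all type $2$-vertices occurring in $T$ lie in a single conjugacy class of $G$. Since type $2$-vertices correspond by construction to noncentral elements of $G$, the assignment sending $T$ to the conjugacy class of any one of its type $2$-vertices is a well-defined map from components of $Y(G)$ to conjugacy classes of noncentral elements of $G$.

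Next I would check surjectivity. Let $\alpha \in G$ be noncentral. As $\alpha \notin Z(G)$, there is $x \in G$ with $x\alpha \neq \alpha x$; put $y = x^{-1}\alpha$, so $xy = \alpha$ and $x,y$ do not commute — this is exactly the argument in Section~\ref{sec: complex} showing every type $2$-vertex lies in a face. Then $[(x,1),(y,1),(\alpha,2)]$ is a face of $X(G)$, so it lies in a unique pseudomanifold component of $X(G)$, which corresponds under desingularization to a unique surface component $T$ of $Y(G)$ having $(\alpha,2)$ as a type $2$-vertex. Hence the conjugacy class of $\alpha$ is the image of $T$, so the map is onto.

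Combining the two steps, the number of components of $Y(G)$ is at least the number of conjugacy classes of noncentral elements of $G$, as claimed. The only step that is not entirely formal is the well-definedness of the map, and that is precisely the content of Theorem~\ref{thm: conjugate1} (the conjugacy of all type $2$-vertices within a fixed component); surjectivity and the counting are immediate from the construction of $X(G)$ and $Y(G)$.
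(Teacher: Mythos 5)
Your proof is correct and follows essentially the same route as the paper: the key input in both is Theorem~\ref{thm: conjugate1} (all type $2$-vertices of a component lie in one conjugacy class), combined with the observation that every noncentral element occurs as a type $2$-vertex of some face; you have merely packaged this as a well-defined surjection from components onto noncentral conjugacy classes, which is the same counting argument.
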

\begin{proof}
By Theorem~\ref{thm: conjugate1}, there is at most one conjugacy class represented by the type 2 vertices in a given component. As there has to be at least one type 2 vertex for each noncentral element of $G$, the corollary follows. (Note the desingularization process can cause there to be more than one type 2-vertex corresponding to a given noncentral element of $G$. Also examples show a conjugacy class can be spread out over more than one component. Both these factors cause the inequality just proved to often not be equality.)

\end{proof}

Recall the modified cell structure of components denoted $\hat{Y}(G)$ where the vertices are only the type 1 vertices, the edges only those joining two type 1 vertices and $2$-faces being $n$-gons which were sheets about type 2 vertices in $Y(G)$. These $n$-gons will carry an implicit label given by the original middle element of the corresponding sheet. Theorem~\ref{thm: conjugate1} shows that a given component with this cell structure has face and edge-transitive cell automorphism group (bijections of vertices which carry edges to edges and boundaries of $2$-faces to boundaries of $2$-faces) and either one or two vertex orbits.
The next theorem collects important formulas for the quantities in this component.

\begin{thm}
\label{thm: conjugate2}
Let $G$ be a finite nonabelian group and let $T$ be the unique component of $\hat{Y}(G)$ which corresponds to the triangle $[(x,1),(y,1),(\alpha=xy,2)]$ where $x,y \in G$ do not commute.
$T$ is a compact, connected, oriented $2$-manifold of genus $g$ with cell structure consisting
of $2$-faces which are all $n$-gons for some fixed $n \geq 3$. The cell automorphism
group always acts face and (unoriented) edge transitively on $T$. \\
(1) If $x$ and $y$ lie in different $\alpha$-conjugacy classes ($n$ even case), then $n=2(\text{size of }\alpha-\text{conjugacy class of } x)$ and the automorphism group acts with at most $2$-orbits of vertices represented by $(x,1)$ and $(y,1)$ respectively. The valency of $(x,1)$ in $T$ with the $2$-cell structure is
given by $\lambda_1 \geq 2$, the size of the $x$-conjugacy class of $\alpha=xy$. The valency of $(y,1)$ in $T$ in this modified $2$-cell structure is $\lambda_2 \geq 2$, the size of the $y$-conjugacy class of $\alpha=xy$.
In the case $\lambda_1 \neq \lambda_2$ we define $\lambda=\frac{\lambda_1V_1 + \lambda_2V_2}{V_1+V_2} \geq 2$ to be the average valency of the component where $V_i$ denotes the number of vertices of valency $\lambda_i$ in the component. In this case each edge in the component joins a vertex of valency
$\lambda_1$ with a vertex of valency $\lambda_2$ and we also have $E=\lambda_iV_i$ for $i=1,2$. Finally the average valency can be computed as either $\lambda= \frac{2E}{V}$ or as the harmonic average of the valencies $\lambda_1, \lambda_2$, i.e.,
$$
\frac{2}{\lambda} = \frac{1}{\lambda_1} + \frac{1}{\lambda_2}.
$$
(2) If $x$ and $y$ lie in the same $\alpha$-conjugacy class ($n$ odd case), then $$n=\text{size of }\alpha-\text{conjugacy class of } x$$ and the automorphism group acts transitively on edges and vertices also.
The common valency $\lambda \geq 2$ of all vertices is given by the size of the $x$-conjugacy class of $\alpha$. \\
(3) If $V, E, F$ denote the number of vertices, edges and $2$-faces of the $2$-cell structure of the component $T$ and the face type of the component is $n$-gons, with average vertex valency $\lambda$ then the following equations hold: \\
$$
nF=2E
$$
$$
nF=\lambda V
$$
$$
2-2g=V-E+F=2(\frac{1}{\lambda}+\frac{1}{n}-\frac{1}{2})E
$$
\end{thm}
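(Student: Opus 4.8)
The plan is to assemble the statement from three facts already established: the structure of a sheet about a type-$2$ vertex (Proposition~\ref{pro: star2}), the structure of a sheet about a type-$1$ vertex (Proposition~\ref{pro: star1}), and the conjugacy/transitivity statement of Theorem~\ref{thm: conjugate1}; everything beyond that is an elementary incidence count together with Euler's formula for the closed oriented surface $T$. First I would pin down the face type $n$. A $2$-face of $T$ is a sheet about some type-$2$ vertex with its interior vertex and edges erased, so its boundary is the link circle of that vertex, which carries only type-$1$ vertices. Applying Proposition~\ref{pro: star2} to the sheet about $(\alpha,2)$ that contains the base triangle $[(x,1),(y,1),(\alpha,2)]$: if $x$ and $y$ lie in distinct $\alpha$-conjugacy classes, the boundary runs once around the union of these two classes, which have a common size $\ell$, so the sheet has $2\ell$ triangles and the face is a $2\ell$-gon with $\ell=|x^{\langle\alpha\rangle}|$; if $x$ and $y$ lie in the same (then necessarily productive, odd) $\alpha$-conjugacy class of size $\ell$, the face is an $\ell$-gon with $\ell=|x^{\langle\alpha\rangle}|$. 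Since every sheet of $T$ is carried to every other by a conjugation from $H=\langle x,y\rangle$ (Theorem~\ref{thm: conjugate1}), all faces are $n$-gons for one common $n$, and $n\ge 3$ because any simplicial disk has at least three triangles.

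Next, the valencies. The valency of $(x,1)$ in the $2$-cell structure of $T$ is the number of $2$-faces incident to it, i.e.\ the number of distinct type-$2$ vertices whose sheet (inside $T$) carries $(x,1)$ on its boundary. After desingularization the closed star of the relevant copy of $(x,1)$ in $T$ is a single sheet of $\bar{St}((x,1))$ in $X(G)$, namely the one containing the base triangle; by Proposition~\ref{pro: star1} the type-$2$ vertices along the link of that sheet form a single $x$-conjugacy orbit, and that orbit contains $(xy,2)=(\alpha,2)$. Hence the valency of $(x,1)$ equals $|\alpha^{\langle x\rangle}|=:\lambda_1$, and since $\alpha=xy$ does not commute with $x$ (else $y$ would), $\lambda_1\ge 2$; symmetrically the valency of $(y,1)$ is $\lambda_2=|\alpha^{\langle y\rangle}|\ge 2$. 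By Theorem~\ref{thm: conjugate1} every type-$1$ vertex of $T$ is $H$-conjugate to $(x,1)$ or $(y,1)$, and such conjugations are cell automorphisms of $T$, so there are at most two vertex orbits — exactly one in case (2), where $x$ and $y$ are conjugate — and, valency being an automorphism invariant, $V=V_1+V_2$ where $V_i$ is the number of vertices of valency $\lambda_i$.

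For the remaining identities: in case (1), $n=2\ell$ is even, so the alternation of each face boundary between $x$-conjugates and $y$-conjugates is a consistent $2$-colouring; as every edge of $T$ lies on such a boundary, every edge joins a $\lambda_1$-vertex to a $\lambda_2$-vertex. Counting the $\lambda_i V_i$ edge-ends at valency-$\lambda_i$ vertices, each edge contributing exactly one, gives $E=\lambda_1 V_1=\lambda_2 V_2$, whence $\lambda=\frac{\lambda_1V_1+\lambda_2V_2}{V_1+V_2}=\frac{2E}{V}$ and $\frac{1}{\lambda_1}+\frac{1}{\lambda_2}=\frac{V_1+V_2}{E}=\frac{2}{\lambda}$. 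In case (2) the ordinary handshake identity gives $\lambda V=2E$ directly with $\lambda$ the common valency. For part (3): each $n$-gon face has $n$ edges and each edge bounds exactly two faces (as $T$ is a closed surface), so $nF=2E$; combined with $\lambda V=2E$ this gives $nF=\lambda V$; and Euler's formula for the genus-$g$ closed oriented surface reads $V-E+F=2-2g$, into which substituting $V=2E/\lambda$ and $F=2E/n$ yields $2-2g=2(\tfrac{1}{\lambda}+\tfrac{1}{n}-\tfrac{1}{2})E$.

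The main obstacle is the bookkeeping in the valency step: one must verify that desingularization genuinely collapses the closed star of the relevant copy of $(x,1)$ in $T$ to a single sheet of $\bar{St}((x,1))$, identify precisely which $x$-conjugacy orbit of type-$2$ vertices that sheet carries, and confirm $\alpha=xy$ lies in it (modulo the harmless replacements of $xy$ by conjugates such as $yx$). The even/odd dichotomy of Proposition~\ref{pro: star2} must also be invoked correctly to justify the consistent $2$-colouring of edges in case (1). None of this is deep, but it is where care is required; the remaining incidence counts and Euler-characteristic algebra are routine.
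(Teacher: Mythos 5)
Your proposal is correct and follows essentially the same route as the paper: face type from the sheet analysis at type-$2$ vertices (Proposition~\ref{pro: star2}) together with Theorem~\ref{thm: conjugate1}, valency of $(x,1)$ as the size of the $x$-conjugacy orbit of $\alpha$ (you cite Proposition~\ref{pro: star1}, the paper redoes the same adjacent-sheet conjugation by $x^{-1}$), then the incidence counts $E=\lambda_iV_i$, $nF=2E=\lambda V$ and Euler's formula. The only cosmetic differences are that you get ``each edge joins a $\lambda_1$-vertex to a $\lambda_2$-vertex'' from the alternating colouring of face boundaries rather than from edge transitivity, and you obtain $nF=\lambda V$ from $nF=2E$ instead of a separate face--vertex incidence count; both variants are fine.
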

\begin{proof}
The formulas for $n$ follow from results in previous sections concerning the number of triangles in a sheet about a type $2$-vertex. The formulas for valency follow once one notes
that the sheet centered about $(xy,2)$ and the one about $(yx,2)$ containing $(x,1)$ are adjacent (share an edge) and are conjugate to each other by conjugation by $x^{-1}$. Repeating this observation, one finds that the $2$-faces containing $(x,1)$ in $T$ consist of those centered
at the $x$-conjugacy orbit of $(\alpha=xy,2)$. Thus the number of faces containing
$(x,1)$ which is the same as the number of edges incident to $(x,1)$ in the $2$-cell structure
is given by the size of this $x$-conjugacy class. Similar arguments work for the vertex $(y,1)$.
As the cell automorphism group of the component is edge transitive, every edge joins a vertex conjugate to $(x,1)$ to a vertex conjugate to
$(y,1)$ where the edge $[(x,1),(y,1)]$ is the original one determining the component. Thus each edge joins a vertex of valency $\lambda_1$ to one of valency $\lambda_2$. When $\lambda_1 \neq \lambda_2$ note that each edge contributes a total of one to the valency count of
vertices of valence $\lambda_1$. As $\lambda_1$ of these edges are incident on a given such vertex we find $E=\lambda_1V_1$.
Similarly $E=\lambda_2V_2$. Note also that $V=V_1+V_2$. Now it follows that $2E=\lambda_1V_1+\lambda_2V_2 = \lambda V$ by definition
so $\lambda=\frac{2E}{V}$. Now note:
$$
\frac{1}{\lambda_1} + \frac{1}{\lambda_2} = \frac{V_1}{E} + \frac{V_2}{E} = \frac{V}{E}=\frac{2}{\lambda}
$$
and so $\lambda$ is the harmonic average of $\lambda_1$ and $\lambda_2$.

Finally the first formula in $(3)$ follows from the observation that each polygonal face contributes $n$ edges to the component and each edge lies in exactly $2$ such faces. We will prove the second formula in the harder case of two types of vertex valencies - the other case follows similarly. First recall that each edge will contribute two vertices, one of which is in the orbit of $(x,1)$ and the other in the orbit of $(y,1)$. Now each face contributes $n$ vertices with an equal number of valency $\lambda_1$ as with valency $\lambda_2$. Again taking account that a vertex of valency $\lambda_i$ lies in $\lambda_i$ such faces we get $\frac{nF}{2} = \lambda_iV_i$. Adding these equations for $i=1,2$ yields $nF=\lambda_1V_1+\lambda_2V_2=\lambda V$ and so the second formula is proven. Finally the Euler characteristic of a Riemann surface of genus $g$ is $2-2g$ and equals the alternating sum $V-E+F$ in any cell decomposition. Thus the final formula follows upon plugging in the previous formulas.
\end{proof}

It follows from face and edge transitivity that many of the quantities in the last theorem have stringent divisibility conditions. The next proposition
records these:

\begin{pro}
\label{pro: divisibility}
Let $G$ be a finite nonabelian group and let $T$ be a component of $\hat{Y}(G)$ with the $2$-cell structure discussed in Theorem~\ref{thm: conjugate2} consisting of $n$-gon faces and with vertex, edge and face counts $V, E, F$ respectively. Then: \\
(1) $E \geq 3$ and $F \geq 3$ are divisors of $|G|$. \\
(2) In the case of two distinct vertex orbits $V_1, V_2, \lambda_1, \lambda_2 \geq 2$ are divisors of $E$ and hence of $|G|$. Furthermore $n \geq 4$ is even and divides $2E$ and hence $2|G|$. \\
(3) In the case of one vertex orbit $\lambda\geq 2, V, n\geq 3$ are divisors of $2E$ and hence of $2|G|$. \\
(4) If either $E$ or $F$ is equal to $|G|$ then the component $T$ must be invariant under conjugation by $G$ and $G$ must have trivial center.
In fact if the edge $[(x,1),(y,1)]$ lies in the component then $C(x) \cap C(y) = \{ 1 \}$. \\
(5) There are finitely many possibilities for all the data of the component \\
$(g,V_i,E,F,\lambda_i, n)$ given $|G|$ determined by these simple divisibility conditions.
\end{pro}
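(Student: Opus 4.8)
The plan is to leverage the conjugation action of $H=\langle x,y\rangle$ on the component $T$. In the proof of Theorem~\ref{thm: conjugate1} we saw that conjugation by each of $x^{\pm1}$ and $y^{\pm1}$ induces an orientation-preserving simplicial automorphism of $Y(G)$ carrying $T$ to itself; hence $H$ acts on $T$ by cell automorphisms, and Theorem~\ref{thm: conjugate1} records that this action is transitive on the $F$ faces, transitive on the $E$ unoriented edges, and transitive on each of the (at most two) vertex orbits. Every divisibility assertion in (1)--(3) will then follow from orbit--stabilizer applied to this action together with the numerical identities of Theorem~\ref{thm: conjugate2}.

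Concretely, for (1): since the $E$ unoriented edges of $T$ form a single $H$-orbit, $E=[H:\mathrm{Stab}_H(\bar e)]$ where $\bar e=\{(x,1),(y,1)\}$, so $E\mid|H|$, and $|H|\mid|G|$, whence $E\mid|G|$. The same argument applied to the single $H$-orbit of faces gives $F=[H:\mathrm{Stab}_H(\phi)]$, where $\phi$ is the $n$-gon centred at $(\alpha,2)$, so $F\mid|G|$. The lower bounds $E\ge3$, $F\ge3$ and $n\ge3$ (sharpened to $n\ge4$ even in the two-orbit case) are forced by feeding the already-established inequalities $\lambda_i\ge2$, $n=2\ell\ge4$ (two-orbit case) and $n=\ell\ge3$ (one-orbit case) into the relations $nF=2E=\lambda V$ and $E=\lambda_iV_i$ of Theorem~\ref{thm: conjugate2}, together with the fact that in a closed-cell structure an $n$-gon carries $n$ distinct boundary vertices, so each vertex orbit already contributes at least $\ell$, respectively $n$, vertices to $V$. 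Parts (2) and (3) are then immediate bookkeeping: in the two-orbit case $E=\lambda_1V_1=\lambda_2V_2$ exhibits $\lambda_1,\lambda_2,V_1,V_2$ as divisors of $E$ (hence of $|G|$) while $nF=2E$ exhibits $n$ (even $\ge4$) as a divisor of $2E$ (hence of $2|G|$); in the one-orbit case $\lambda V=2E=nF$ exhibits $\lambda$, $V$, $n$ as divisors of $2E$ (hence of $2|G|$).

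For (4): if $E=|G|$ then $E\mid|H|\mid|G|$ forces $|H|=|G|$, so $H=G$, and $[G:\mathrm{Stab}_G(\bar e)]=E=|G|$ forces $\mathrm{Stab}_G(\bar e)=\{1\}$. Since $C(x)\cap C(y)$ fixes $\bar e$ pointwise we get $C(x)\cap C(y)=\{1\}$, and since $Z(G)\subseteq C(x)\cap C(y)$ the centre of $G$ is trivial; moreover $T$ is invariant under conjugation by all of $G$ because $G=H$ already acts on it. The case $F=|G|$ is handled the same way once one observes that conjugation by $\alpha$ fixes $\phi$ setwise (it rotates $\phi$ along its rim, by the analysis preceding Proposition~\ref{pro: star2}), so $\alpha$ is a nontrivial element of $\mathrm{Stab}_G(\phi)$ and $F<|G|$ strictly. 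Part (5) is then automatic: $E$ and $F$ range over the finitely many divisors of $|G|$, which determines $n=2E/F$; $V$ (or, in the two-orbit case, $V_1$ and $V_2$) is constrained to divide $2E$ (resp.\ $E$); $\lambda$ (resp.\ $\lambda_i$) is then $2E/V$; and $g$ is fixed by $2-2g=V-E+F$; hence only finitely many tuples $(g,V_i,E,F,\lambda_i,n)$ satisfy the constraints.

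The step needing the most care is (4): one must argue that a conjugating element which merely permutes $\{(x,1),(y,1)\}$, or permutes the rim-vertices of $\phi$, cannot drop the relevant index all the way to force $E=|G|$ (resp.\ $F=|G|$) without the stated centralizer collapsing --- that is, one must pin down $\mathrm{Stab}_G(\bar e)$ as an overgroup of $C(x)\cap C(y)$ that coincides with it in the extremal case, and $\mathrm{Stab}_G(\phi)$ as an overgroup of $\langle\alpha\rangle$; the coincidence $H=G$ in the extremal case is exactly what makes the $H$-action and the ambient $G$-action agree and removes the remaining ambiguity. The numerical lower bounds in (1)--(2) also merit a careful case analysis, since the estimates are essentially tight.
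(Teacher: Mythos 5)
Your proposal is correct in substance and follows essentially the same route as the paper: divisibility comes from orbit--stabilizer applied to a conjugation action that is edge and face transitive by Theorem~\ref{thm: conjugate1}, and parts (2), (3), (5) come from the identities $E=\lambda_iV_i$, $nF=2E=\lambda V$ and the Euler characteristic, exactly as in the paper's proof. The one organizational difference is that you let $H=\langle x,y\rangle$ act on the single component $T$ (which it preserves, as the proof of Theorem~\ref{thm: conjugate1} shows), getting $E=[H:\mathrm{Stab}_H(\bar e)]$ so $E\mid |H|\mid |G|$, whereas the paper lets all of $G$ act on $\hat{Y}(G)$ and counts the $G$-orbit of an edge across the $\ell$ conjugate components, getting $|G|=\ell E|S|$; both yield the same conclusions, and your extremal analysis in (4) ($E=|G|$ forces $H=G$, a free transitive edge action, hence $C(x)\cap C(y)=\{1\}$ and $Z(G)=\{1\}$, with $T$ automatically $G$-invariant) matches the paper's $\ell=1$, $|S|=1$ argument. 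Your observation that $F=|G|$ can never occur, because $\alpha=xy\neq 1$ always stabilizes the face centered at $(\alpha,2)$, is a correct sharpening the paper does not make, and it handles that half of (4) vacuously.

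One step, however, would fail as written: the claimed derivation of $F\geq 3$ from $\lambda_i\geq 2$, $n\geq 3$ and the counting identities. No such derivation exists, because $F\geq 3$ is not actually forced by those constraints: the relations $nF=2E=\lambda V$ with $\lambda=2$, $V=E=n$ admit $F=2$, and this case genuinely occurs in the construction --- the $\{d,2\}$ spherical components of Lemma~\ref{lem: valence2} and Theorem~\ref{thm: dihedral} (dihedral groups) have exactly two faces. So that inequality in the statement of Proposition~\ref{pro: divisibility} is itself too strong in the valency-two case; the paper's own proof simply does not address the lower bounds at all. Your bound $E\geq 3$ (each closed $n$-gon face contributes $n\geq 3$ distinct edges) is fine; for $F$ you should either restrict to $\lambda\geq 3$ (where $F\geq 3$ does follow, as in the proof of Theorem~\ref{thm: finiteness}) or note the exceptional $\{n,2\}$ family rather than claim the bound in general.
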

\begin{proof}
$G$ acts on $\hat{Y}(G)$ cellularly by conjugation. Thus it shuffles components around taking components to isomorphic components.
The $G$-conjugacy orbit of the component $T$ is thus a union of $\ell \geq 1$ components all isomorphic as cell complexes with $T$.
Taking an edge in $e$ in the component $T$, Theorem~\ref{thm: conjugate1} shows that the $G$-conjugation orbit of $e$ includes all the edges in $T$. It is then easy to see that the $G$-conjugation orbit of $e$ is exactly the set of edges in the $\ell$ components conjugate to $T$.
Since each of these components has the same edge count $E$ we have $\ell E = \frac{|G|}{|S|}$ where $S$ is the stabilizer subgroup of the edge $e$. Thus $|G|=\ell E |S|$ and so $E$ divides $|G|$. Furthermore $E=|G|$ if and only if $|S|=1$ and $\ell=1$ i.e. $T$ is invariant
under $G$-conjugation and $G$ acts freely and transitively on the set of edges of $T$. As any element of $C(x) \cap C(y)$ fixes the edge
$[(x,1),(y,1)]$ under conjugation, we must have $C(x) \cap C(y) = \{ 1 \}$ in this case and in particular $G$ must have trivial center.
An analogous argument works for faces and so (1) and (4) are proved.  In case (2), we have $E=\lambda_i V_i$ and $2E=nF$ and so the result
follows. Also $n$ is even in this case as the proof of Theorem~\ref{thm: conjugate2} shows. In case (3), we have $2E=\lambda V=nF$ and that case follows immediately also. Finally (5) follows as there are only finitely
many divisors of a positive integer $|G|$ and $2-2g=V_1+V_2-E+F=V-E+F$ is an ``alternating" sum of three or four of these divisors.
\end{proof}

The reader is warned that in general $V=V_1 + V_2$ does not divide $|G|$ as examples in later sections show.

The $2$-cell structures on the components of $\hat{Y}(G)$ can be modified trivially to form what is called an abstract $3$-polytope in the combinatorics literature. The definition of these objects is a bit long so we will not repeat it here but the reader may find it in \cite{Cox2} and \cite{MS}. We will verify these conditions briefly here.

First note the cell structure on a given component $T$ of $\hat{Y}(G)$ can be used to form a
poset consisting of the set $F_{-1} \cup F_0 \cup F_1 \cup F_2 \cup F_3$ where $F_k$ will be
the set of $k$-faces in the poset ordered by inclusion. Here $F_0$ is the set of vertices of the component, $F_1$ is the set of edges of the component and $F_2$ is the set of polygonal $2$-faces of the component. $F_{-1}$ consists of a single empty face, which is the smallest element of the poset and $F_3$ consist of a formal "biggest" face containing all other faces.

A flag in this poset consists of $\emptyset \subset v \subset e \subset f \subset D$
where $v$ is a vertex, $e$ is an edge, $f$ is a $2$-face and $D$ is the formal greatest $3$-face.
All such flags have length $5$.

This poset satisfies the ``diamond condition" of a polytope as every edge contains exactly two
vertices, every edge is contained in exactly two $2$-faces and for any $v \in f$ there are exactly
two edges $e,e'$ such that $v \in e \subset f, v \in e' \subset f$.

Finally the poset is strongly connected as each component is a path-connected Riemann surface with path connected $2$-faces as the reader can verify.

Thus the $2$-cell structure of any component extended formally to a poset with smallest emptyset face and greatest $3$-face satisfies the properties of an abstract $3$-polytope.
The automorphism group of this poset is easily seen to be the same as the group of bijections of the vertex set to itself which takes edges to edges and faces to faces i.e. is the same as the group of cell automorphisms of the $2$-cell complex $T$. We have already seen this automorphism group is always face and edge transitive. This translates to the sections $S(\emptyset, f)$
of this poset to be all $n$-polygons for fixed $n \geq 3$. There are at most 2 orbits of vertices,
the sections $S(v,D)$ thus are polygons of at most two different types.

To be equivar in the language of polytopes, these valencies must be the same. In addition to be regular, the automorphism group must act transitively on flags. We identify conditions for these situations in the next "dictionary" theorem:

\begin{thm}
\label{thm: polytope}
Let $G$ be a nonabelian group and $T$ the unique component of $\hat{Y}(G)$ corresponding to the triangle $[(x,1),(y,1),(xy,2)]$ with the extended cell structure discussed in the previous paragraphs. Recall all $2$-cells of $T$ are $n$-gons. Then: \\
(1) This cell structure of $T$ forms an abstract 3-polytope with $2$-face and edge transitive automorphism group. \\
(2) The abstract $3$-polytope is equivar if and only if all vertices have the same valency $\lambda$.
This happens when the size of the $x$ and $y$-conjugacy classes of $\alpha=xy$ are the same.
The Schl\"afli index of this equivar 3-polytope is $\{n,\lambda\}$. \\
(3) The abstract $3$-polytope has $2$-face, edge and vertex transitive automorphism group if $x$ and $y$ are
$\alpha$-conjugate. \\
(4) The abstract $3$-polytope has flag-transitive automorphism group (i.e. is regular) if $x$ and $y$ are $\alpha$-conjugate and there is an automorphism of the group $G$ taking $x$ to $y^{-1}$
and $y$ to $x^{-1}$ \\
(5) The results of (3) and (4) still hold when $x$ and $y$ are not $\alpha$-conjugate as long as there exists
an automorphism of the group $G$ taking $x$ to $y$ and $y$ to $y^{-1}xy$ and another taking $x$ to $y^{-1}$ and
$y$ to $x^{-1}$.
\end{thm}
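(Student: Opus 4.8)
The plan is to verify each of the five claimed assertions in turn, building on the structural results of Theorem~\ref{thm: conjugate1} and Theorem~\ref{thm: conjugate2}, together with the functoriality of the construction (so that every automorphism of $G$ induces an orientation-preserving cell automorphism of $\hat{Y}(G)$ which permutes components while preserving cell type). The key observation underlying everything is that the automorphism group of the abstract $3$-polytope $T$ coincides with the cell-automorphism group of $T$ as a $CW$-complex, and the latter contains at least the group generated by conjugations by elements of the subgroup $H=\langle x,y\rangle$ that stabilize $T$ (shown in Theorem~\ref{thm: conjugate1} to be face- and edge-transitive), together with the restrictions of those $\mathrm{Aut}(G)$-induced automorphisms which happen to fix $T$ setwise.

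For (1), I would simply assemble the poset verification already sketched in the paragraphs preceding the theorem: the poset $F_{-1}\cup F_0\cup F_1\cup F_2\cup F_3$ has all flags of length $5$, satisfies the diamond condition (every edge has two vertices, lies in two $2$-faces by the pseudomanifold property, and every incident vertex--face pair is joined by exactly two edges since the face is an $n$-gon), and is strongly flag-connected because $T$ is a path-connected surface with path-connected polygonal faces; face- and edge-transitivity of the automorphism group is Theorem~\ref{thm: conjugate1}. For (2), equivariance is by definition the statement that all vertex sections $S(v,D)$ are polygons of the same size, i.e.\ all valencies agree; by Theorem~\ref{thm: conjugate2}(1) the two possible valencies are the sizes of the $x$- and $y$-conjugacy classes of $\alpha=xy$, so they coincide exactly under the stated condition, and in that case all faces being $n$-gons and all vertices $\lambda$-valent forces the Schl\"afli symbol $\{n,\lambda\}$. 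For (3), when $x$ and $y$ are $\alpha$-conjugate we are in the ``$n$ odd'' case of Theorem~\ref{thm: conjugate2}(2), where there is a single vertex orbit and the automorphism group is already vertex-transitive (and face- and edge-transitive), so the polytope has $2$-face, edge and vertex transitive automorphism group.

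For (4) and (5) — the regularity statements, which I expect to be the main obstacle — the point is that flag-transitivity of an abstract $3$-polytope is equivalent to the existence of ``reflection-like'' automorphisms $\rho_0,\rho_1,\rho_2$ fixing all but one element of a base flag $\Phi=(\emptyset\subset v\subset e\subset f\subset D)$, or more efficiently to showing the automorphism group acts transitively on flags. Given that (3) already supplies transitivity on vertices, edges, and faces, one only needs a single extra automorphism that fixes a chosen $2$-face $f$ and a chosen incident vertex $v$ but swaps the two edges of $f$ at $v$ (equivalently, reverses the orientation of the boundary cycle of $f$): combined with the vertex/edge/face transitivity this will generate flag-transitivity. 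The automorphism of $G$ sending $x\mapsto y^{-1}$, $y\mapsto x^{-1}$ is designed precisely so that the induced cell automorphism fixes the component $T$ (it sends the defining triangle $[(x,1),(y,1),(xy,2)]$ to $[(y^{-1},1),(x^{-1},1),(y^{-1}x^{-1},2)]$, and one checks $y^{-1}x^{-1}=(xy)^{-1}$ lies in the same $H$-conjugacy class handled in Theorem~\ref{thm: conjugate1}, so $T$ maps to itself) and acts as such a boundary-reversing ``reflection'' on the face about $(xy,2)$; meanwhile the composition of this with conjugations inside $H$ realizes the remaining generators $\rho_i$. In case (5), when $x,y$ are not $\alpha$-conjugate the surface has two vertex orbits, so the additional automorphism $x\mapsto y$, $y\mapsto y^{-1}xy$ is needed to interchange the two orbits (realizing the missing $\rho_0$-type move across vertex types), and then the same argument as in (4) applies with the $\rho_2$-type reflection coming from $x\mapsto y^{-1},y\mapsto x^{-1}$. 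The care needed here — and the part I would write out most carefully — is checking that each prescribed map on $\{x,y\}$ (a) actually extends to a genuine automorphism of $G$ under the hypothesis, (b) preserves the component $T$ setwise, which amounts to tracking where the defining triangle goes and invoking the conjugacy bookkeeping of Theorem~\ref{thm: conjugate1}, and (c) induces on the base flag exactly the ``change one element, fix the rest'' move required, so that together with the transitivity from (3) the full flag-transitivity follows.
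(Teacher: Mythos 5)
Parts (1), (2) and (3) of your outline are fine and essentially match the paper's argument. The genuine gap is in (4) (and hence in (5), which leans on it). You propose to use the automorphism $\phi$ of $G$ with $\phi(x)=y^{-1}$, $\phi(y)=x^{-1}$ directly via functoriality, claiming the induced cell automorphism fixes $T$ because the defining triangle goes to $[(y^{-1},1),(x^{-1},1),((xy)^{-1},2)]$ and ``$(xy)^{-1}$ lies in the same conjugacy class handled in Theorem~\ref{thm: conjugate1}.'' That is not justified: Theorem~\ref{thm: conjugate1} says the type $2$ vertices of $T$ are conjugates of $xy$, and in general $(xy)^{-1}$ is \emph{not} conjugate to $xy$, so the induced map need not preserve $T$ at all; even when it does, it sends the face centered at $(xy,2)$ to the face centered at $((xy)^{-1},2)$ rather than fixing it, so it does not perform the flag move you need. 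More fundamentally, the missing observation is an orientation one: the required automorphism fixes a $2$-face and an edge of it while swapping the two vertices of that edge (equivalently, it reverses the boundary cycle of the face), so it must be orientation reversing, whereas functoriality applied to any automorphism of $G$ always yields an orientation-preserving simplicial map. No choice of group automorphism alone can produce it.

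The paper's fix is to use the anti-automorphism $I\circ\phi$, where $I(g)=g^{-1}$ is inversion: since an anti-automorphism sends the face $[(a,1),(b,1),(ab,2)]$ to the face with vertices $(\theta(a),1),(\theta(b),1),(\theta(b)\theta(a),2)$ with orientation reversed, one computes $(I\circ\phi)(x)=y$, $(I\circ\phi)(y)=x$ and $(I\circ\phi)$ sends $xy\mapsto xy$, so the induced orientation-reversing cell automorphism fixes the face about $(xy,2)$ and the edge $[(x,1),(y,1)]$ while interchanging $(x,1)$ and $(y,1)$; combined with the face transitivity and the $\alpha$-conjugation rotations of a fixed face (which give transitivity on incident edge--face pairs), this yields flag transitivity. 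With this replacement your reduction ``vertex/edge/face transitivity plus one reflection-like move implies regularity'' goes through, and your treatment of (5) (using the automorphism $x\mapsto y$, $y\mapsto y^{-1}xy$ to merge the two vertex orbits, as the paper also does) then works as in (4).
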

\begin{proof}
(1) and (2) follow from the work in Theorem~\ref{thm: conjugate1} and the previous paragraphs.
To see (3), note that any edge or vertex of the component can be mapped by an automorphism
to lie in the $2$-cell centered at $(\alpha,2)$. Then $\alpha$-conjugacy will move this image edge or vertex to any chosen representative edge or vertex on that $2$-cell as long as all vertices on the boundary of this $2$-cell are $\alpha$-conjugate. This happens if and only if $x$ and $y$
are $\alpha$-conjugate. For (4), given a reference flag $v \in e \subset f$ (we will surpress the empty face and greatest face in this proof as they do not come into play) and another flag $v' \in e' \subset f'$ we can first apply an automorphism to take $f'$ to $f$ by face transitivity. Thus we many assume $f=f'$. Then as all vertices along the rim of the corresponding $2$-cell are conjugate, we may conjugate fixing $f$ so that $e'$ moves to $e$. Thus it remains to show
that there is a automorphism of the polytope taking the flag $(x,1) \in [(x,1),(y,1)] \subset [(x,1),(y,1),(xy,2)]$ to the flag $(y,1) \in [(x,1),(y,1)] \subset [(x,1),(y,1),(xy,2)]$. As such a map has to be orientation reversing, to achieve it using functoriality we have to use an anti-automorphism of the group. If $I$ is the inversion map $I(x)=x^{-1}$, then any anti-automorphism is the composition of $I$ with an automorphism. A quick calculation shows that if $\phi$ is an automorphism of $G$ taking $x$ to $y^{-1}$ and $y$ to $x^{-1}$, the simplicial automorphism arising from $I \circ \phi$ does the job! Finally for (5), the stated automorphism maps the edge $[(x,1),(y,1)]$ to its adjacent edge on the sheet about
$(xy,2)$. This together with previous comments gives vertex transitivity. Then regularity follows as in the proof of (4).
\end{proof}

The duality operation of reversing the poset ordering in the poset underlying an abstract $3$-polytope has the effect of
interchanging the roles of vertices and $2$-faces while leaving the edges alone. It takes an equivar $3$-polytope of Schl\"afli symbol $\{ n, \lambda \}$ to one of symbol $\{ \lambda, n \}$. It corresponds to the classical dual cell structure construction behind Poincare duality of the component. The vertices in this new structure correspond to the centers of the $2$-faces in the original. Edges are drawn between these vertices when the $2$-faces they came from were adjacent. The new $2$-faces hence come from arrangements of faces around vertices in the original and are $\lambda_i$-gons
if the vertex had valency $\lambda_i$. Thus the dual of one of our complexes would be vertex and edge transitive and have either one or two orbits of faces. In the case of two orbits, the faces would consist of either $\lambda_1$-gons or $\lambda_2$-gons  and around every vertex
these types would alternate with total even vertex valency $n$. Thus all in all, the role of face type and valency interchange and the role of
vertices and faces change under duality. Thus if the reader prefers, they can consider the dual to our construction which would be like a "soccerball", consisting of at most two types of polygonal faces with vertex and edge transitive automorphism group. In other words the complexes that arise as components in the construction $\hat{Y}(G)$ that have two valencies are in general dual to abstract quasiregular (i.e., vertex, edge transitive with two types of faces arranged alternatingly about a vertex) $3$-polytopes.

\subsection{Valence Two and Doubling}

In this section we will discuss the case when one or both valencies in Theorem~\ref{thm: conjugate2} are equal to two.

\begin{lem}
\label{lem: valence2}
Let $X$ be a tesselated Riemann surface as those arising
is Theorem~\ref{thm: conjugate2} i.e., edge and face transitive and with at most two orbits of vertices. If the valencies $\lambda_1=\lambda_2=2$ then $F=2, E=V=n, g=0$ and $X$ is a sphere obtained by gluing two $n$-gon faces along their common rim.
\end{lem}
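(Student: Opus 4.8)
The plan is to feed the hypothesis $\lambda_1=\lambda_2=2$ into the three Euler-characteristic/counting identities of Theorem~\ref{thm: conjugate2}(3) and then identify the resulting cell complex topologically. First I would invoke the identity $nF=\lambda V$ together with $nF=2E$; since $\lambda$ (the average valency) is the harmonic mean of $\lambda_1$ and $\lambda_2$, the hypothesis forces $\lambda=2$, so $nF=2V=2E$, giving immediately $V=E$. Next I would combine $nF=2E$ with the genus formula $2-2g=V-E+F=2(\tfrac{1}{\lambda}+\tfrac{1}{n}-\tfrac12)E$; plugging in $\lambda=2$ collapses the bracket to $\tfrac1n$, so $2-2g=\tfrac{2E}{n}=F$. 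Hence $F=2-2g$, and since $F\geq 1$ and $g\geq 0$ this forces $g=0$ and $F=2$. From $nF=2E$ with $F=2$ we get $E=n$, and then $V=E=n$ as well.

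The remaining task is the geometric identification: a compact connected oriented surface of genus $0$ with a closed $2$-cell structure having exactly two $n$-gon faces, $n$ edges and $n$ vertices. I would argue as follows. Take the two $n$-gon $2$-cells $f_1,f_2$. Each contributes $n$ boundary edge-slots, for a total of $2n$; since every edge lies in exactly two $2$-faces (the diamond/pseudomanifold property, or $nF=2E$ read as a double count) and $E=n$, each edge is glued between one slot of $f_1$ and one slot of $f_2$. Because every vertex has valency $2$, the link of each vertex is a single circle made of exactly two edges, so the edges around $\partial f_1$ are glued in cyclic order to the edges around $\partial f_2$; this identifies the whole space as two $n$-gons glued along their entire boundary via a cyclic boundary-reversing homeomorphism, which is the standard CW decomposition of $S^2$. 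Orientability is automatic from the setup of $\hat{Y}(G)$. This matches the claimed conclusion $F=2$, $E=V=n$, $g=0$, and $X\cong S^2$.

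The main obstacle I anticipate is not the arithmetic — which is a few substitutions — but making the last topological identification rigorous without hand-waving: one must be careful that the two $n$-cycles $\partial f_1$ and $\partial f_2$ really are glued to each other in a single cyclic pattern rather than, say, $f_1$ being glued partly to itself. This is ruled out by the closed-cell hypothesis (no boundary identifications occur between points on the boundary of the same face, which is part of the $2$-cell structure recalled just before Theorem~\ref{thm: conjugate1}) together with the valency-$2$ condition forcing each vertex-link to be an honest circle with two edges; I would state these two facts explicitly and then conclude that the attaching map of $f_2$ traverses each edge of the single boundary circle of $f_1\cup(\text{edges})$ exactly once, yielding the sphere. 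Alternatively, one can shortcut the topology entirely: having already derived $g=0$ purely from the counting identities, the sentence "$X$ is a sphere" follows from the classification of closed oriented surfaces, and the description "obtained by gluing two $n$-gons along their common rim" is then just a description of the cell structure, justified by $F=2$, $E=V=n$ and the diamond condition. I would present the short arithmetic derivation first and append the gluing description as a remark.
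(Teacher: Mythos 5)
Your proposal is correct and follows essentially the same route as the paper: substitute $\lambda=2$ into the identities of Theorem~\ref{thm: conjugate2}(3), use positivity of $2-2g=\frac{2E}{n}$ to get $g=0$, and then read off $E=n$, $F=2$, $V=E$. Your added discussion of why the two $n$-gons are glued cyclically along a common rim is more detail than the paper provides (it simply states ``the lemma follows''), but it is a refinement of the same argument, not a different one.
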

\begin{proof}
$\lambda_1=\lambda_2=2$ implies $\lambda=2$. Using this in the equations in part (3) of Theorem~\ref{thm: conjugate2}, we find $2-2g=\frac{2E}{n}$. As this quantity is positive, this forces $g=0$ and then $E=n$. Then $nF=2E$ forces $F=2$ and $\lambda V = 2E $ forces $E=V$. The lemma follows.
\end{proof}

We now consider the case of a tesselated Riemann surface which is edge/face transitive
and has two orbits of vertices of valency $\lambda_1=2$ and $\lambda_2=k > 2$. Recall when we have two valencies we have an even face type $2s$. We will denote the Schl\"afli symbol of such a complex as $\{2s, 2\text{-}k\}$ where $2s$ is the face type.

If we have such a complex, the two orbits of vertices are distinguishable due to their distinct valencies. Notice each vertex of valency $2$ lies in two faces and is adjacent to two vertices of valency $k > 2$ which also lie in both of these faces. Thus we can remove all valency $2$ vertices and combine the two incident edges to any of them into a single edge. This construction doesn't change the underlying surface so the genus is unchanged. The face type changes from $2s$-gon to $s$-gon and the new edge count is half the original one. The new vertex count is equal to the count of valency $k$ vertices in the original complex. The resulting complex is equivar with a single vertex valency $k$. Thus we have changed a $\{2s,2\text{-}k\}$ complex into an equivar
$\{s,k\}$ complex. Conversely given an equivar complex of the form $\{s,k\}$ one can add a midpoint vertex to each edge to obtain a $\{2s,2\text{-}k\}$ one and it is easy to see these processes are inverse processes.

We will refer to the $\{2s,2\text{-}k\}$ complex obtained from the $\{s,k\}$ one as the ``double" of the $\{s,k\}$-complex. We will sometimes write $\{2s,2\text{-}k\}=D\{s,k\}$. Although the complexes are so similar, it is important to note that if both occur in the decomposition $Y(G)$ for a given group $G$, they are distinct functorially, i.e., no group automorphism can interchange the two types. Also note that $\{4,2\text{-}k\}=D\{2,k\}$; in this case only, after removing valency two vertices, one obtains an equivar complex whose faces are $2$-gons. While the $\{2,k\}$ complexes do not arise in $\hat{Y}(G)$ since $n<3$, the $\{2,k\}$ complexes are duals of the $\{k,2\}$ complexes described in Lemma~\ref{lem: valence2}. Since the doubling and duality operations are genus preserving, the $\{4,2\text{-}k\}$ complexes have genus zero ($g=0$).

\subsection{A Finiteness Theorem}

In this section we show that except the genus one ($g=1$), and the $\{n,2\}$ and $\{4,2\text{-}k\}$ families described above, for a given genus there are only finitely many distinct tesselations on the closed surface of genus $g$ of the sort arising in Theorem~\ref{thm: conjugate2}. Recall that these tesselations are closed cell structures, i.e., the closed cells are homeomorphic to $2$-disks, that is to say, no self-identifications occur along the boundary of the faces.

\begin{thm}
\label{thm: finiteness} Let $g, V, E, F, n, \lambda_1, \lambda_2$ be as in Theorem~\ref{thm: conjugate2}. The distinct closed-cell tesselations on the closed surface of genus $g$ which are edge and face transitive having $n\geq 3$, can be categorized as follows:\\
(i) For each fixed genus $g\geq 2$, there are only finitely many possibilities for all the data $(V_i, E, F, n, \lambda_i)$.\\
(ii) For $g=1$, there are only finitely many possibilities for the Schl\"afli symbol $\{n,\lambda\}$ or $\{n,\lambda_1\text{-}\lambda_2\}$. There are infinitely many possibilities for $V_i, E,$ and $F$.\\
(iii) For $g=0$, there are infinite families when $\lambda_1=\lambda_2=2$ and $\{n,\lambda_1\text{-}\lambda_2\}=\{4,2\text{-}k\}, k\geq 3$. Otherwise, there are only finitely many possibilities for all the data $(V_i, E, F, n, \lambda_i)$.
\end{thm}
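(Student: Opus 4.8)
The plan is to extract, from part (3) of Theorem~\ref{thm: conjugate2}, the single Diophantine constraint

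\textbf{Approach.} Everything will flow from the Euler characteristic identity in part (3) of Theorem~\ref{thm: conjugate2},
$$
2-2g = V-E+F = 2\left(\frac{1}{\lambda}+\frac{1}{n}-\frac{1}{2}\right)E,
$$
together with the relations $nF=2E$, $\lambda V = 2E$, and (in the two-valency case) $\frac{2}{\lambda}=\frac{1}{\lambda_1}+\frac{1}{\lambda_2}$, plus the constraints $n\geq 3$ and $\lambda_i\geq 2$. The idea is to treat the sign of the quantity $\kappa := \frac{1}{\lambda}+\frac{1}{n}-\frac{1}{2}$ as the organizing principle, exactly as in the classical trichotomy for regular maps: $g\geq 2 \iff \kappa<0$, $g=1 \iff \kappa=0$, $g=0 \iff \kappa>0$.

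\textbf{Case $g\geq 2$ (part (i)).} Here $\kappa<0$, so $2g-2 = 2|\kappa|E$, which gives $E = \frac{g-1}{|\kappa|}$. The point is that $|\kappa| = \frac{1}{2}-\frac{1}{\lambda}-\frac{1}{n}$ is bounded below by a positive constant once $n\geq 3$ and $\lambda\geq 2$: I would enumerate the finitely many pairs $(n,\lambda)$ (or $(n,\lambda_1,\lambda_2)$) with $\frac{1}{\lambda}+\frac{1}{n}\geq \frac{1}{2}$ — these are the ``spherical and Euclidean'' exceptions — and note that for all other pairs $|\kappa|\geq \varepsilon$ for some explicit $\varepsilon>0$ (in fact $|\kappa|\geq \frac{1}{42}$ by the usual $(2,3,7)$ argument on $\frac{1}{\lambda}+\frac{1}{n}<\frac{1}{2}$). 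Hence $E$ is bounded by $42(g-1)$, and then $F = 2E/n$, $V=2E/\lambda$, $V_i = E/\lambda_i$ are all bounded; since $n,\lambda,\lambda_i$ are themselves bounded (they divide $2E$ or $E$ and are $\geq 2$), there are finitely many admissible tuples. One must also rule out that $\kappa=0$ or $\kappa>0$ can coexist with $g\geq 2$, which is immediate from the displayed identity since $E>0$.

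\textbf{Cases $g=1$ and $g=0$ (parts (ii), (iii)).} For $g=1$, $\kappa=0$ forces $\frac{1}{\lambda}+\frac{1}{n}=\frac{1}{2}$, whose solutions in integers $\geq 2$ (resp. $(n,\lambda_1,\lambda_2)$ with $n\geq 3$) form the finite list $\{3,6\},\{4,4\},\{6,3\}$ together with the two-valency refinements; this pins down the Schläfli symbol, but the identity places no constraint on $E$ itself, so $E$ (and $F$, $V$) range over an infinite set — one should exhibit one infinite family (e.g. arbitrarily fine square tilings of the torus) to confirm genuine infinitude. For $g=0$, $\kappa>0$ means $(n,\lambda)$ lies in the finite ``spherical'' list; for each such pair with $\lambda>2$ the equation $1 = \kappa E$ determines $E$ uniquely, hence everything is finite — \emph{except} when $\lambda=\lambda_1=\lambda_2=2$, where $\kappa=\frac1n$ and $E=n$ is still determined by $n$ (this is the $\{n,2\}$ family of Lemma~\ref{lem: valence2}, excluded by hypothesis since it has one vertex orbit... here one must be careful and instead invoke the genuinely two-orbit family), and when $\{n,\lambda_1\text{-}\lambda_2\}=\{4,2\text{-}k\}$: then $\lambda = \frac{4k}{2+k}$, so $\kappa = \frac14 + \frac{2+k}{4k}-\frac12 = \frac{1}{2k}>0$, giving $E = k$ — wait, this would be finite for each $k$, but $k$ itself ranges over all integers $\geq 3$, producing the infinite family. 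So the surviving infinite family in genus $0$ is precisely the union over $k\geq 3$ of the $\{4,2\text{-}k\}$ complexes, matching the statement.

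\textbf{Main obstacle.} The routine part is the Euler-characteristic bookkeeping; the delicate part is the careful case analysis of the boundary cases where $\frac1\lambda+\frac1n$ is exactly or nearly $\frac12$, and in particular correctly identifying which infinite families survive in genus $0$ versus which are excluded by the ``closed-cell / two-orbit'' hypotheses — one has to cross-check against Lemma~\ref{lem: valence2} and the doubling discussion to be sure the $\{4,2\text{-}k\}$ family (and not, say, a spurious $\{n,2\}$ family) is the one that persists, and to confirm that for $g=0$ with genuinely two distinct valencies both $\geq 3$ the data is forced to be finite. A secondary point requiring care: the divisibility relations among $n,\lambda_i,E,F,V_i$ must be used to conclude finiteness of the full tuple once $E$ is bounded, since a priori $n$ could be large — but $n\mid 2E$ and $n\geq 3$ handle this.
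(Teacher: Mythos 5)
Your strategy---everything from $nF=2E$, $nF=\lambda V$, $2-2g=2(\tfrac{1}{\lambda}+\tfrac{1}{n}-\tfrac12)E$, organized by the sign of $\kappa=\tfrac{1}{\lambda}+\tfrac{1}{n}-\tfrac12$---is the same Euler-characteristic bookkeeping as the paper's proof, but the key quantitative step in part (i) does not go through as written. The bound $|\kappa|\ge \tfrac1{42}$ ``by the usual $(2,3,7)$ argument'' is valid only when $\lambda$ is an integer, i.e.\ in the equivar case. In the two-valency case $\tfrac{1}{\lambda}=\tfrac{1}{2\lambda_1}+\tfrac{1}{2\lambda_2}$ is a harmonic mean, and the deficit can be smaller: $(n,\lambda_1,\lambda_2)=(4,3,7)$, or $(6,2,7)$, gives $|\kappa|=\tfrac{1}{84}$. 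A uniform positive lower bound does exist, but that is precisely what needs proof, and your proposed route (``enumerate the finitely many triples with $\kappa\ge 0$, then $|\kappa|\ge\varepsilon$ for the rest'') does not supply it --- note also that the set of triples with $\kappa\ge 0$ is \emph{infinite}, because of the $\{n,2\}$ and $\{4,2\text{-}k\}$ families. The paper avoids this by first disposing of all $\lambda_1=2$ cases via Lemma~\ref{lem: valence2} and the doubling correspondence $\{2s,2\text{-}k\}=D\{s,k\}$, and then, for $\lambda_1,\lambda_2\ge 3$, deriving explicit bounds $\lambda\le 2(2g+1)$, $n\le 2(2g+1)$, $3\le\lambda_1<2g+2$, $3<\lambda_2\le 6g$ from $V\ge 3$ (resp.\ $V_i\ge 2$) and $E=\lambda_iV_i$; after that all data is determined by $g,n,\lambda_1,\lambda_2$ through the three equations, so one never needs to bound $E$ first. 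Your ``bound $E$ first'' variant is fine in principle, but only once such an $\varepsilon$ is actually established for non-integer $\lambda$.

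Your genus-$0$ discussion also mishandles the $\lambda_1=\lambda_2=2$ family: it is not ``excluded by hypothesis'' (the hypotheses are only edge/face transitivity, closed cells and $n\ge 3$; Lemma~\ref{lem: valence2} describes exactly these $\{n,2\}$ spheres, which do occur, e.g.\ for dihedral groups), and your concluding sentence that the surviving infinite family is precisely $\{4,2\text{-}k\}$ contradicts part (iii), which lists \emph{both} $\{n,2\}$ and $\{4,2\text{-}k\}$ as infinite families. (Minor: for $\{4,2\text{-}k\}$ one gets $E=2k$, not $k$.) Finally, the ``finite spherical list'' for $\kappa>0$ is finite only after these two families are set aside; the remaining $\lambda_1=2$, $n\ge 6$ possibilities must still be checked (they are finite, or are absorbed at once by the doubling reduction). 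With the $\varepsilon$-bound repaired and the genus-$0$ cases sorted as above, your argument becomes essentially the paper's.
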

\begin{proof}

The $\{n,2\}$ (i.e. $\lambda_1=\lambda_2=2$) and $\{4,2\text{-}k\}$ cases are described by Lemma~\ref{lem: valence2} and the discussion that followed. Furthermore, the doubling operation shows that the remaining valence two cases where $\lambda_1=2$ and $\lambda_2=k>2$, are in one-to-one correspondence with the equivar complexes $\{s,k\}$, where $s\geq 3$. Hence, we need only prove the finiteness assertions for $\lambda_1, \lambda_2 \geq 3$.

We utilize the equations presented in Theorem~\ref{thm: conjugate2}:
\begin{eqnarray}
nF&=&2E \label{eq: 1}
\\
nF&=&\lambda V \label{eq: 2}
\\
2(1-g)=V-E+F&=&2(\frac{1}{\lambda}+\frac{1}{n}-\frac12)E, \label{eq: 3}
\end{eqnarray}
where $\lambda=\lambda_1=\lambda_2$ is the common valency in the equivar case, and $\frac{1}{\lambda}=\frac{1}{2\lambda_1}+\frac{1}{2\lambda_2}$ in the case of two valencies. In general $n\geq 3$, and in the two valency case $n$ must be even (hence $n\geq 4$).

Let $g\geq 1$. Using $n\geq 3$ in Equation \ref{eq: 3} we have
\begin{equation}
E(\frac16 -\frac{1}{\lambda}) \leq g-1 \,. \label{eq: 4}
\end{equation}
Since $V \geq 3$ we have $E=\frac{\lambda V}{2} \geq \frac{3\lambda}{2}$. Applying this to Equation \ref{eq: 4}, we obtain $\lambda \leq 2(2g+1)$. Note that $F\geq 3$ since we assume $\lambda \geq 3$. Then $n\leq 2(2g+1)$ by the exact same argument with the roles of $\lambda$ and $n$ interchanged. In particular, this proves that there are only finitely many possibilities for $\lambda$ and $n$ in the equivar case, for each $g\geq 1$. It remains to show there are only finitely many possibilities for $3 \leq \lambda_1 <\lambda_2$ in the two valency case. Using $\lambda_1 < \lambda$ and $n\geq 4$, Equation \ref{eq: 3} becomes
\begin{equation*}
E(\frac14 - \frac{1}{\lambda_1}) < g-1
\end{equation*}
Since $V_1 \geq 2$ in this case, $E=\lambda_1 V_1 \geq 2\lambda_1$. Applying this we obtain $3\leq \lambda_1 < 2g+2$. Proceeding a similar manner using $n\geq 4$, we have
\begin{equation*}
E(\frac14 - \frac{1}{2\lambda_1} - \frac{1}{2\lambda_2}) \leq g-1
\end{equation*}
and with $\lambda_1\geq 3$ and $E\geq 2\lambda_2$ we obtain $3 < \lambda_2\leq 6g$.

So for $g\geq 1$ there are finitely many possibilities for $\lambda_1, \lambda_2, n$. For $g\geq 2$, all of the data $(V_i, E, F, n, \lambda_i)$ is determined by $g, n, \lambda_1, \lambda_2$ through equations \ref{eq: 1}-\ref{eq: 3}. This proves (i). For $g=1$, $E$ is not determined by $g, n, \lambda_1, \lambda_2$, and there are infinitely many possibilities for $E, V,$ and $F$. This proves (ii).

For $g=0$, equation~\ref{eq: 3} implies $\frac{1}{\lambda}+\frac{1}{n}>\frac{1}{2}$. Using $n\geq 3$ we have $\lambda < 6$, and from $\lambda \geq 3$ we obtain $n<6$. So there are only finitely many possibilities for $n$ and $\lambda$ in the equivar case. In the case of two valencies $3\leq \lambda_1 < \lambda_2$, applying $n\geq 4$ and $\lambda_1 <\lambda$, $\frac{1}{\lambda}+\frac{1}{n}>\frac{1}{2}$ implies $\frac{1}{\lambda_1}+\frac{1}{4}>\frac{1}{2}$ or $3\leq \lambda_1 < 4$. So $\lambda_1=3$. Using this with $n\geq 4$, $\frac{1}{\lambda}+\frac{1}{n}>\frac{1}{2}$ implies $\frac{1}{6}+\frac{1}{2\lambda_2}+\frac{1}{4}>\frac{1}{2}$ or $\lambda_2 < 6$. So for $g=0$ there are finitely many possibilities for $\lambda_1, \lambda_2, n$. This proves (iii) since all of the data $(V_i, E, F, n, \lambda_i)$ is determined by $g, n, \lambda_1, \lambda_2$ through Equations \ref{eq: 1}-\ref{eq: 3}.
\end{proof}

\begin{cor}
\label{cor: finiteness1} A closed cell tesselation on the closed surface of genus $0$ which is edge and face transitive having $n\geq 3$, must have data $(V_i, E, F, n, \lambda_i)$ given by one of the rows of the following:
\begin{table}[htp]
\centering
\caption{Possible Tesselations on the Riemann Surface of Genus 0}
\begin{tabular}{|c|c|c|c|c|}
\hline
 \# Faces & Schl\"afli Symbol & \# Vertices & \# Edges & Solid Type
\\
\hline
 2 & $\{n,2\}$ & $n$ & $n$ & dual Hosahedron
\\
 $k$ & $\{4,2\text{-}k\}$ & $k+2$ & $2k$ & double Hosahedron
\\
4 & $\{3,3\}$ & 4 & 6  & Tetrahedron
\\
4 & $\{6,2\text{-}3\}=D\{3,3\}$ & 10 & 12 & double Tetrahedron
\\
 6 & $\{4,3\}$ & 8 & 12 & Cube
\\
 6 & $\{8,2\text{-}3\}=D\{4,3\}$ & 20 & 24 & double Cube
\\
  8 & $\{3,4\}$ & 6 & 12 & Octahedron
\\
  8 & $\{6,2\text{-}4\}=D\{3,4\}$ & 18 & 24 & double Octahedron
\\
  12 & $\{5,3\}$ & 20 & 30 & Dodecahedron
\\
  12 & $\{10,2\text{-}3\}=D\{5,3\}$ & 50 & 60 & double Dodecahedron
\\
 12 & $\{4,3\text{-}4\}$ & 14 & 24 & Rhombic Dodecahedron
\\
  20 & $\{3,5\}$ & 12 & 30 & Icosahedron
\\
  20 & $\{6,2\text{-}5\}=D\{3,5\}$ & 42 & 60 & double Icosahedron
\\
 30 & $\{4,3\text{-}5\}$ & 32 & 60 & Rhombictriacontahedron
 \\
\hline
\end{tabular}
\label{table:Genus0}
\end{table}

\end{cor}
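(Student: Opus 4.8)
The plan is to feed the finiteness constraints of Theorem~\ref{thm: finiteness}(iii) into the three balance equations (\ref{eq: 1})--(\ref{eq: 3}) of Theorem~\ref{thm: conjugate2} specialized to $g=0$, to solve for the integers $(V_i,E,F)$ in each surviving case, and to match the outcomes with the rows of the table; the two infinite families then fall out of Lemma~\ref{lem: valence2} and the doubling discussion directly.

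First I would organize the cases exactly as in the proof of Theorem~\ref{thm: finiteness}: (A) $\lambda_1=\lambda_2=2$; (B) $\lambda_1=2<\lambda_2=k$; (C) $\lambda_1=\lambda_2=\lambda\ge 3$; (D) $3\le\lambda_1<\lambda_2$. In case (A), Lemma~\ref{lem: valence2} immediately gives $F=2$, $V=E=n$, which is the first row ($\{n,2\}$, the dual hosohedron, infinite family). In case (B) I would use the doubling correspondence from the previous section: a $\{2s,2\text{-}k\}$ complex corresponds to the equivar complex $\{s,k\}$ of valency $k$ on the same surface. When $s=2$ this is the infinite family $\{4,2\text{-}k\}=D\{2,k\}$ (second row); solving $4F=2E$, $\frac{1}{\lambda}=\frac{1}{4}+\frac{1}{2k}$, $2E=\lambda V$, and $V-E+F=2$ gives $E=2k$, $F=k$, $V_1=k$, $V_2=2$, $V=k+2$. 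When $s\ge 3$, the partner $\{s,k\}$ is a genus-$0$ equivar complex with $s,k\ge 3$; by the enumeration of case (C) below these are precisely the five Platonic solids, and applying the doubling operation $F\mapsto F$, $E\mapsto 2E$, $V\mapsto V+E$ produces the five rows $\{6,2\text{-}3\}$, $\{8,2\text{-}3\}$, $\{6,2\text{-}4\}$, $\{10,2\text{-}3\}$, $\{6,2\text{-}5\}$.

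For case (C) I would solve the classical spherical inequality $\frac{1}{n}+\frac{1}{\lambda}>\frac{1}{2}$ with $3\le n,\lambda$; Theorem~\ref{thm: finiteness}(iii) already bounds $n,\lambda\le 5$, and the only solutions are $(n,\lambda)\in\{(3,3),(4,3),(3,4),(5,3),(3,5)\}$. For each, the system $nF=2E=\lambda V$ together with $V-E+F=2$ has a unique positive integer solution, giving the tetrahedron, cube, octahedron, dodecahedron and icosahedron with the stated $(V,E,F)$. For case (D), Theorem~\ref{thm: finiteness}(iii) already forces $\lambda_1=3$, $\lambda_2\in\{4,5\}$, and $n=4$; substituting $\frac{1}{\lambda}=\frac{1}{6}+\frac{1}{2\lambda_2}$ into $4F=2E$, $E=3V_1=\lambda_2 V_2$, $V_1+V_2-E+F=2$ yields $(V_1,V_2,E,F)=(8,6,24,12)$ for $\lambda_2=4$ (the rhombic dodecahedron $\{4,3\text{-}4\}$) and $(20,12,60,30)$ for $\lambda_2=5$ (the rhombic triacontahedron $\{4,3\text{-}5\}$). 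Assembling cases (A)--(D) gives exactly the fourteen rows, proving the corollary.

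I expect the only real hazard to be bookkeeping: one must check that the doubling correspondence in case (B) contributes exactly the doubles of the five Platonic solids (partner with $s\ge 3$) together with the single infinite family $\{4,2\text{-}k\}$ (partner with $s=2$), and that doubling a dihedron $\{s,2\}$ merely yields another dihedron $\{2s,2\}$ and so produces no new row. A minor secondary check is to verify in cases (B)--(D) that the solution of (\ref{eq: 1})--(\ref{eq: 3}) is a genuine positive-integer solution satisfying $n\ge 3$, $V_i\ge 2$, $\lambda_i\ge 2$, so that no spurious Schl\"afli symbol slips through.
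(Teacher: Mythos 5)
Your proposal is correct and follows essentially the same route as the paper: it feeds the bounds from Theorem~\ref{thm: finiteness}(iii) into Equations (\ref{eq: 1})--(\ref{eq: 3}) to recover the five Platonic solids and the two rhombic cases, and handles the valence-two cases via Lemma~\ref{lem: valence2} and the doubling correspondence, exactly as the paper does. Your explicit doubling arithmetic ($F\mapsto F$, $E\mapsto 2E$, $V\mapsto V+E$) and the integer checks are just a more detailed write-up of the paper's algorithm, and all the resulting data match the table.
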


\noindent {\it Note:} All cases in table~\ref{table:Genus0} are realized in our construction, in particular these cases exist. For example, these are generated by $\Sigma_5$, see table 9 of \cite{HP}.

\begin{proof}
The proof of Theorem~\ref{thm: finiteness} provides an algorithm for finding all of the cases. We have the infinite families $\{n,2\}$ and $\{4,2\text{-}k\}$. Considering $\lambda_1, \lambda_2\geq 3$, in the equivar cases we found the bounds $3\leq n, \lambda \leq 5$, or $n, \lambda = 3,4$, or $5$. Then on each pair $n, \lambda$ we use equation \ref{eq: 3} to solve for $E$, or rule out the case if no integer solution is found. If an integer $E$ is found, we then use equations \ref{eq: 1} and \ref{eq: 2} to solve for $V$ and $F$. This leads to the well-known five Platonic Solids as shown in the chart. We also have the doubles of these five. In the two valency case $3\leq \lambda_1 < \lambda_2$ we found $n=4, \lambda_1=3, \lambda_2=4$ or $5$. Again we use equation \ref{eq: 3} to solve for $E$, then Equations \ref{eq: 1} and \ref{eq: 2} to solve for $V$ and $F$. This provides all cases.
\end{proof}

\begin{cor} If $G$ is an odd order group then no surface of genus $0$ (sphere) occurs in $Y(G)$ and hence $X(G)$ is a $K(\pi,1)$-space. On the other hand, using the odd order theorem, it follows
that for any nonabelian simple group $G$, there exists a surface of genus $0$ (sphere) in the decomposition $Y(G)$ and $\pi_2(X(G)) \neq 0$.
\end{cor}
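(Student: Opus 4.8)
The plan is to treat the two assertions in turn, in each case first deciding whether a $2$-sphere can occur among the components of $Y(G)$ and then extracting the homotopical consequence for $X(G)$ from Theorem~\ref{thm: structurethm}.

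Assume $|G|$ is odd and, for contradiction, that some component $T$ of $Y(G)$ is a sphere; equip $T$ with its $2$-cell structure from Theorem~\ref{thm: conjugate2}, so that it has $n$-gon faces and data $(V_i,E,F,\lambda_i)$. Each valency $\lambda_i$ is the size of a conjugacy orbit under a cyclic group $\langle x\rangle$ (Theorem~\ref{thm: conjugate2}), hence divides the order of some element of $G$; since that order is odd and $\lambda_i\geq 2$, each $\lambda_i$ is an odd integer $\geq 3$. If $n$ were odd then $nF=2E$ (Theorem~\ref{thm: conjugate2}) would force $F$ to be even, contradicting Proposition~\ref{pro: divisibility}(1), which gives $F\mid|G|$ with $|G|$ odd; hence $n$ is even, so Theorem~\ref{thm: conjugate2} puts us in the two-valency case and Proposition~\ref{pro: star2} gives $n=2\ell$ for an odd divisor $\ell\geq 2$ of the order of $\alpha$, whence $\ell\geq 3$ and $n\geq 6$. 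Feeding $g=0$ into the Euler relation $2-2g=2\bigl(\tfrac1\lambda+\tfrac1n-\tfrac12\bigr)E$ of Theorem~\ref{thm: conjugate2} gives $\tfrac1\lambda+\tfrac1n>\tfrac12$; but $\tfrac1\lambda=\tfrac1{2\lambda_1}+\tfrac1{2\lambda_2}\leq\tfrac13$ and $\tfrac1n\leq\tfrac16$, so $\tfrac1\lambda+\tfrac1n\leq\tfrac12$, a contradiction. Thus $m_0(G)=0$, and by Theorem~\ref{thm: structurethm} $X(G)$ is homotopy equivalent to a finite wedge of circles and closed orientable surfaces of genus $\geq 1$. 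Every wedge summand is aspherical (a closed surface of genus $\geq 1$ has universal cover $\mathbb R^2$, and $S^1=K(\mathbb Z,1)$), and a wedge of aspherical CW-complexes is aspherical — its universal cover is a tree-like union of contractible pieces glued along single points, hence contractible — so $X(G)$ is a $K(\pi,1)$.

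For the second assertion, by the Feit--Thompson odd order theorem a nonabelian simple group is not solvable, so $|G|$ is even and by Cauchy $G$ has an involution $t$, which is noncentral since $Z(G)=1$. I claim $t$ fails to commute with some conjugate of itself: otherwise $t$ commutes with every generator of the normal subgroup $\langle t^G\rangle$, which is $G$ by simplicity, forcing $t\in Z(G)=1$ — absurd. Choose $g$ with $u:=t^g$ not commuting with $t$; then $u$ is an involution distinct from $t$, and $[(t,1),(u,1),(tu,2)]$ is a face of $X(G)$ with $\alpha:=tu$ of order $\geq 3$ (orders $1$ or $2$ would force $tu=ut$). Conjugating $\alpha=tu$ by the involution $t$ gives $t(tu)t^{-1}=ut=(tu)^{-1}\neq tu$, so the $t$-conjugacy orbit of $\alpha$ has size $2$, i.e.\ $\lambda_1=2$; symmetrically $\lambda_2=2$. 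By Lemma~\ref{lem: valence2} the component of $\hat{Y}(G)$ through this face is a sphere, so $m_0(G)\geq 1$. Finally, by Theorem~\ref{thm: structurethm} this $S^2$ appears as a wedge summand, hence a retract, of $X(G)$; therefore $\pi_2(S^2)=\mathbb Z$ is a retract of $\pi_2(X(G))$, so $\pi_2(X(G))\neq 0$.

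I expect the main subtleties to be soft topological ones: that a wedge of aspherical CW-complexes is aspherical, and that a wedge summand is a retract so that $\pi_2$ is detected on it — both standard but worth a sentence of justification each. The one genuinely group-theoretic step is the production of two noncommuting involutions in a nonabelian simple group, and this is where simplicity (not mere nonabelian-ness) enters, via $\langle t^G\rangle=G$; note one cannot shortcut this by fixing a small subgroup such as $\Sigma_3$, since e.g.\ the Suzuki groups $Sz(q)$ are nonabelian simple of order prime to $3$. The inequality in the odd-order case is elementary once one records that odd order forces all valencies $\geq 3$ and all face sizes even (hence $\geq 6$).
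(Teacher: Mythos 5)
Your proof is correct, but it takes a more self-contained route than the paper on both halves. For the odd-order half, the paper simply quotes the genus-zero classification (Corollary~\ref{cor: finiteness1}): every entry of that table has an even number of edges or faces, so Proposition~\ref{pro: divisibility} forces $|G|$ to be even. You instead re-derive the contradiction directly from the relations in Theorem~\ref{thm: conjugate2}, using odd order to force every valency to be odd and $\geq 3$, and the face size to be even and $\geq 6$ (via $F\mid|G|$ and Proposition~\ref{pro: star2}); this is essentially the same Euler-characteristic/divisibility computation that produces the table, repackaged as a short parity argument that avoids invoking the classification. (One minor imprecision: $n$ even only tells you $x$ and $y$ lie in different $\alpha$-conjugacy classes, not that $\lambda_1\neq\lambda_2$, but your bound $\frac{1}{\lambda}=\frac{1}{2\lambda_1}+\frac{1}{2\lambda_2}\leq\frac{1}{3}$ is valid whether or not the two valencies coincide.) For the simple-group half, the paper produces two noncommuting involutions, notes they generate a dihedral subgroup, and then appeals to the explicit computation of $Y(D_{2n})$ in Section~\ref{sec: dihedral} together with the monotonicity corollary $m_g(H)\leq m_g(G)$; you bypass both by computing, inside $G$ itself, that the $t$- and $u$-conjugacy classes of $\alpha=tu$ each have size two, so the component through $[(t,1),(u,1),(tu,2)]$ has all vertex valencies equal to $2$ and is a sphere by Lemma~\ref{lem: valence2} (if $|tu|$ is odd you land in the single-valency case $\lambda=2$, which the lemma still covers). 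Your version buys independence from the examples section and makes explicit the soft topology the paper leaves implicit (asphericity of a wedge of aspherical complexes, and the retraction onto the $S^2$ summand detecting $\pi_2$); the paper's version buys brevity and shows the monotonicity functor at work. Your device for producing noncommuting involutions ($t$ commuting with all of $t^G$ would put $t$ in $Z(G)$ since $\langle t^G\rangle=G$) also differs mildly from the paper's (all involutions commuting would generate a normal elementary abelian subgroup), but both are sound.
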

\begin{proof}
The possible cell-structures of the surface of genus $0$ that can arise in $\hat{Y}(G)$ are captured in Corollary~\ref{cor: finiteness1}. All of these have
an even number of faces or an even number of edges which implies $|G|$ is even if one of these occurs in $\hat{Y}(G)$ by Proposition~\ref{pro: divisibility}.
Thus if $G$ is an odd order group, no spheres occur in $Y(G)$ and hence $X(G)$ is homotopy equivalent to a bouquet of closed surfaces of genus $g \geq 1$ and circles and hence
is a $K(\pi,1)$-space, i.e., all higher homotopy groups vanish. On the other hand, if $G$ is a nonabelian simple group, then by the odd order theorem,
$|G|$ is even and $G$ possesses an element of order $2$. If all elements of order two commuted with each other in $G$, they would form a normal elementary abelian subgroup which is impossible as $G$ is simple and so there exist two noncommuting elements of order two which generate a dihedral subgroup.
As spheres occur in $Y(H)$ when $H$ is dihedral (see the example section under dihedral groups for a proof of this), spheres occur in $Y(G)$ also by monotonicity. Thus $X(G)$ is homotopy equivalent to a bouquet of a positive number of spheres with a $K(\pi,1)$-space and so has $\pi_2(X(G)) \neq 0$.
\end{proof}

Note the odd order theorem was used in the 2nd part of the argument of the last corollary. In fact, if an independent argument could be made to show that
$\pi_2(X(G)) \neq 0$ or equivalently that $Y(G)$ contained a sphere when $G$ is a nonabelian simple group then it would provide a proof of the odd order theorem.

One can similarly solve for all possible Schl\"afli Symbols in higher genus cases. We present the genus $g=1$ case in Corollaries~\ref{cor: finiteness2}. Full lists of the possibilities in the  regular case (which are limited to single valency) are available for genus 2 through 15 and are contained in
work of Conder and Dobcs\'anyi (see \cite{CD}). The reader is warned that in \cite{CD}, cell structures do not have to be closed, that is to say that the interior of faces are open disks but their closure need not be disks in the space:  self-identifications along the boundary are allowed. As noted in the proof of Theorem~\ref{thm: finiteness}, Equation \ref{eq: 3} does not constrain $E$ when $g=1$, and there are infinitely many possibilities for $V, E$, and $F$. For example, it is well-known that a torus can be given a closed-cell tesselation with $pq$ squares, $p, q \geq 2$, (a composite number) simply by subdividing a rectangle into a $p\times q$ grid, then gluing the top and bottom edges, and left and right edges. It is interesting to note that $\Sigma_5$ contains a $g=1$ component with $(V,E,F)=(5,10,5)$ and $\{n,\lambda\}=\{4,4\}$. That is, one can give a closed-cell tesselation on the torus using five 4-gons (a prime number), see Figure~\ref{fig: torus5}. In fact, this is possible with $p$ $4$-gons, for any prime $p\geq 5$. For example Figure~\ref{fig: torus7} shows $p=7$. Note since these are
closed-cell tesselations, the faces are genuinely $4$-gons. (If self-identifications were allowed on the boundary of faces, then such tesselations would
exist trivially.)

\begin{figure}[htp]
\centering
\begin{minipage}{.4\textwidth}
  \centering
  \includegraphics[width=.6\textwidth]{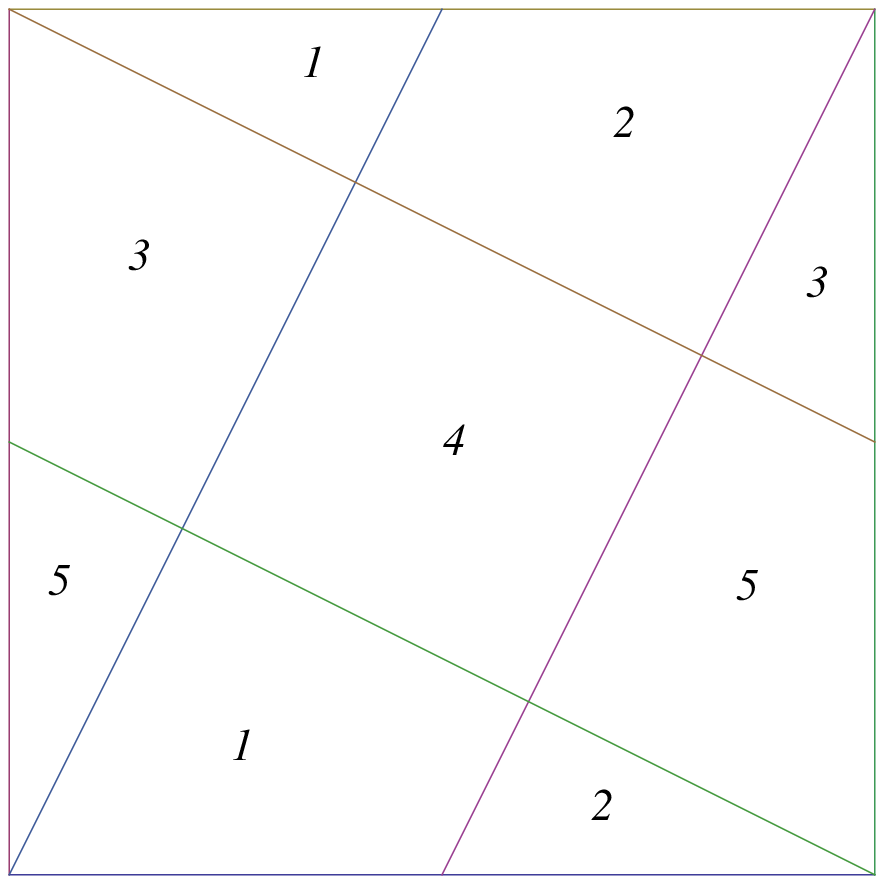}
  \caption{A tesselation of the Torus using five 4-gons.}
  \label{fig: torus5}
\end{minipage}\hspace{1cm}
\begin{minipage}{.4\textwidth}
  \centering
  \includegraphics[width=.6\textwidth]{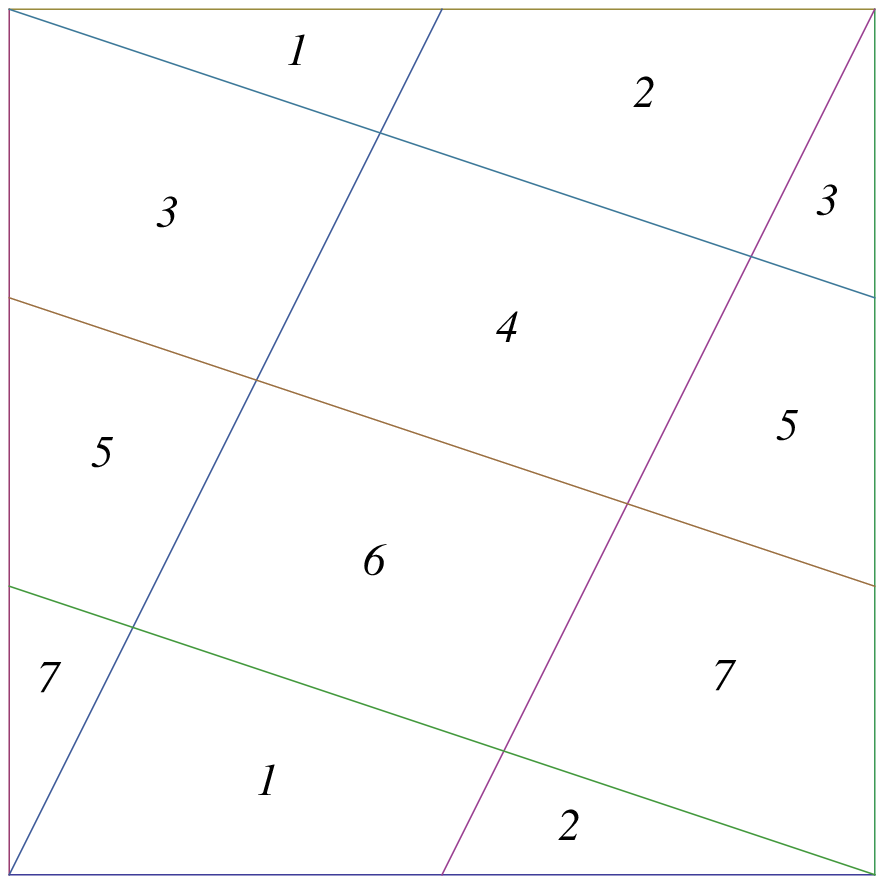}
  \caption{A tesselation of the Torus using seven 4-gons.}
  \label{fig: torus7}
\end{minipage}
\end{figure}

\begin{cor}
\label{cor: finiteness2} A closed-cell tesselation on the closed surface of genus $1$ which is edge and face transitive having $n\geq 3$, must have one of the following Schl\"afli Symbols:
\begin{eqnarray*}
\{3,6\},&& \quad D\{3,6\}=\{6,2\text{-}6\}, \quad \{4,4\}, \quad D\{4,4\}=\{8,2\text{-}4\},
\\
\{6,3\},&& \quad D\{6,3\}=\{12,2\text{-}3\}, \quad  \{4,3\text{-}6\}
\end{eqnarray*}
\end{cor}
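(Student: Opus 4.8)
The plan is to specialize the Euler-characteristic relations of Theorem~\ref{thm: conjugate2} to the case $g=1$ and then carry out the elementary enumeration already prepared in the proof of Theorem~\ref{thm: finiteness}. Setting $g=1$ in the identity $2(1-g)=V-E+F=2\bigl(\tfrac{1}{\lambda}+\tfrac{1}{n}-\tfrac12\bigr)E$ and using $E>0$ forces the single Diophantine condition
\[
\frac{1}{\lambda}+\frac{1}{n}=\frac12 ,
\]
where $\lambda=\lambda_1=\lambda_2$ in the equivar case and $\tfrac{1}{\lambda}=\tfrac{1}{2\lambda_1}+\tfrac{1}{2\lambda_2}$ in the two-valency case. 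As in Theorem~\ref{thm: finiteness}, I first trim the list of cases to be checked: the family $\{n,2\}$ (equivar, $\lambda=2$) and the family $\{4,2\text{-}k\}$ have genus $0$ by Lemma~\ref{lem: valence2} and the duality/doubling discussion that follows it, so they do not occur here; and every remaining two-valency complex with $\lambda_1=2<\lambda_2$ is the genus-preserving double $D\{s,k\}$ of an equivar complex $\{s,k\}$ with $s\ge 3$. Hence it suffices to (a) list the equivar solutions $\{n,\lambda\}$ with $n,\lambda\ge 3$, (b) adjoin their doubles, and (c) list the genuine two-valency solutions with $3\le\lambda_1<\lambda_2$ (where $n$ is even and $n\ge 4$).

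For step (a): from $\lambda\ge 3$ one gets $\tfrac1n=\tfrac12-\tfrac1\lambda\ge\tfrac16$, so $n\le 6$, and symmetrically $\lambda\le 6$; scanning $n,\lambda\in\{3,4,5,6\}$ (the value $n=5$ gives $\lambda=10/3\notin\mathbb{Z}$) leaves exactly $\{3,6\}$, $\{4,4\}$, $\{6,3\}$. Step (b) then yields $D\{3,6\}=\{6,2\text{-}6\}$, $D\{4,4\}=\{8,2\text{-}4\}$, $D\{6,3\}=\{12,2\text{-}3\}$. For step (c): since $\lambda_1<\lambda$ we have $\tfrac1n=\tfrac12-\tfrac1\lambda>\tfrac12-\tfrac1{\lambda_1}\ge\tfrac16$, which forces $n=4$ (the only even integer with $4\le n<6$); then $\tfrac1\lambda=\tfrac14$, so $\tfrac1{\lambda_1}+\tfrac1{\lambda_2}=\tfrac12$ with $3\le\lambda_1<\lambda_2$, whose only solution is $(\lambda_1,\lambda_2)=(3,6)$, giving $\{4,3\text{-}6\}$. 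Assembling (a)--(c) produces exactly the seven Schl\"afli symbols in the statement.

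I do not expect a genuine obstacle: once the genus-$1$ specialization collapses the defining relation to $\tfrac1\lambda+\tfrac1n=\tfrac12$, everything reduces to a short finite search, entirely parallel to Corollary~\ref{cor: finiteness1}. The only points requiring care are bookkeeping ones: keeping the $\{n,2\}$ and $\{4,2\text{-}k\}$ families correctly quarantined as genus $0$ via Lemma~\ref{lem: valence2}; invoking that the doubling (and duality) operations preserve genus, so the doubles of the three equivar tori are again tori; and not conflating the $\lambda_1=2$ two-valency complexes (which are doubles) with the $\lambda_1\ge 3$ ones. If desired, one can close with a remark that all seven symbols are realized in the construction --- for instance the $\{4,4\}$ torus with $(V,E,F)=(5,10,5)$ occurs in $Y(\Sigma_5)$, as noted in the surrounding text --- though this plays no role in the enumeration itself.
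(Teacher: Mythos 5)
Your proposal is correct and follows essentially the same route the paper intends: specialize the relations of Theorem~\ref{thm: conjugate2} to $g=1$, where they collapse to $\tfrac{1}{\lambda}+\tfrac{1}{n}=\tfrac12$, quarantine the genus-zero families $\{n,2\}$ and $\{4,2\text{-}k\}$ via Lemma~\ref{lem: valence2}, account for the $\lambda_1=2$ cases through the genus-preserving doubling correspondence, and run the short finite search in the equivar and genuine two-valency cases exactly as in the proof of Theorem~\ref{thm: finiteness} and Corollary~\ref{cor: finiteness1}. The enumeration and the resulting seven symbols match the paper's list, so no gaps to report.
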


\noindent {\it Note:} All cases above exist as tesselations of the torus, however we are not sure if they all occur for $\hat{Y}(G)$, for some group $G$.

\section{Examples}
\label{sec: examples}

\subsection{Dihedral groups}
\label{sec: dihedral}

Let $D_{2n}, n \geq 3$ denote the dihedral group of order $2n$. It consists of $n$ rotations and $n$ reflections which are elements of order $2$. It is generated by two reflections $\sigma_1, \sigma_2$ whose product is a rotation of order $n$. Since a product of an even number of reflections is a rotation and the product of an odd number of reflections is a reflection, each triangle in the triangulation of $X(D_{2n})$ has vertices which consist of two reflections and a rotation. Let $\tau$ denote a generator of the cyclic subgroup of rotations of order $n$ and $\sigma$ a fixed reflection, then $\sigma \tau \sigma=\tau^{-1}$. The set of reflections is then $\{ \sigma \tau^k | 0 \leq k < n \}$.
The center of $D_{2n}$ is trivial if $n$ is odd and has order two generated by $\tau^{\frac{n}{2}}$
if $n$ is even.

Let $\tau^k$ be a noncentral rotation. A simple computation shows that the sheets centered at the vertex $(\tau^k,2)$ have rim vertices of the form \\ $(\sigma \tau^s,1),  (\sigma \tau^{s+k},1), (\sigma \tau^{s+2k},1), \dots$. Thus these sheets form $d$-gons where $d$ is the order of $\tau^k$ and there are $\frac{n}{d}$ of them corresponding to the cosets of the
cyclic group of order $d$ generated by $\tau^k$ in the cyclic group of $n$ rotations.
Each of these sheets fits together with a corresponding sheet of its inverse $(\tau^{-k},2)$
to make a sphere which is the suspension of a $d$-gon. In the cell structure of $\hat{Y}(D_{2n})$, the corresponding Schl\"afli symbol is $\{ d, 2\}$.

Let $\phi(d)$ be Euler's Phi function, denoting the number of primitive $d$th roots of unity or equivalently the number of generators of a cyclic group of order $d$. Then when $n$ is odd,  for every divisor $1 < d | n$ we have $\frac{\phi(d)}{2}$ pairs of elements of order $d$, each pair leading to $\frac{n}{d}$ spheres of Schl\"afli symbol $\{d, 2 \}$ as mentioned above. When $n$ is even we only need to exclude the case $d=2$ which corresponds to the nontrivial central rotation of order $2$. The analysis so far accounts for all $2$-sheets centered at rotations.

Now a completely similar analysis shows that a sheet about $(\tau^k,1)$ is of the form
$(\sigma \tau^s,1), (\sigma \tau^{s+k},2), (\sigma \tau^{s+2k},1), (\sigma \tau^{s+3k},2), \dots$ and hence consists of $d'$ type 1 vertices and $d'$ type 2 vertices where $d'$ is the order of $\tau^{2k}$.The number of such distinct sheets as $s$ varies is $\frac{n}{d'}$ and each of these fits together with a paired sheet about $(\tau^{-k},1)$ to form a sphere which is the suspension of a $2d'$-gon. In the corresponding cell structure in $\hat{Y}(D_{2n})$, the $2$-cells are $4$-gons, the $d'$ equatorial vertices have valency $2$ and the north and south poles $(\tau^{\pm k},1)$ have valency $d'$. Thus the Schl\"afli symbol is $\{ 4, 2\text{-}d' \}$ where $d'$ is the order of $\tau^{2k}$. Now when $\tau^k$ has order $d$ then $\tau^{2k}$ has order $d'=d$ when $d$ is odd and $d'=\frac{d}{2}$ when $d$ is even.

These observations can then be put together to get the following theorem:

\begin{thm}
\label{thm: dihedral}
Let $D_{2n}, n \geq 3$ denote the dihedral group of order $2n$. Then: \\
(1) All components in $Y(D_{2n})$ are spheres (genus $g=0$). \\
(2) If $n$ is odd, then for any $1 <  d |n$ there are $\frac{\phi(d)n}{2d}$ spherical components
with Schl\"afli symbol $\{d,2\}$ and another $\frac{\phi(d)n}{2d}$ spherical components with
Schl\"afli symbol $\{4,2\text{-}d\}=D\{2,d\}$. \\
(3) If $n$ is even, then for any $3 \leq d |n$ there are $\frac{\phi(d)n}{2d}$ spherical components
with Schl\"afli symbol $\{d,2\}$. For every odd such $d$ there is another $\frac{\phi(d)n}{2d}$
spherical components with symbol $\{4,2\text{-}d\}=D\{2,d\}$ and for every even such $d$
there is another $\frac{\phi(d)n}{d}$ spherical components with symbol $\{4,2\text{-}\frac{d}{2}\}$.
\end{thm}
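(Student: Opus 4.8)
The plan is to assemble Theorem~\ref{thm: dihedral} from the explicit sheet computations already carried out above, organized around a dichotomy on the faces of $X(D_{2n})$.

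I would first record that every triangle $[(x,1),(y,1),(xy,2)]$ of $X(D_{2n})$ has exactly two reflection vertices and one rotation vertex: two distinct rotations commute, so at most one vertex is a rotation, and the product relation forces the third vertex once the types of two are fixed. This splits the faces into \emph{type A}, where the two type-$1$ vertices are reflections and the type-$2$ vertex $xy$ is a noncentral rotation, and \emph{type B}, where one type-$1$ vertex is a noncentral rotation (the other a reflection) and $xy$ is a reflection. Since edge-adjacent faces always carry type-$2$ vertices of the same kind --- for an edge between two type-$1$ reflection vertices both neighboring faces are type A with mutually inverse rotation labels, and for an edge containing a type-$2$ vertex both neighboring faces share that very vertex --- each pseudomanifold component, hence each component of $Y(D_{2n})$, consists entirely of type-A faces or entirely of type-B faces. (Alternatively, invoke Theorem~\ref{thm: conjugate1} together with the fact that rotations and reflections are never conjugate in $D_{2n}$.)

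Next I would work out the two families of components from the local data in the text. In the type-A case, the sheet about $(\tau^k,2)$, where $\tau^k$ is noncentral of order $d$, is a $d$-disk whose rim is a coset of $\langle\tau^k\rangle$ inside the set of reflections; I would check that the face across any rim edge sits in the sheet about $(\tau^{-k},2)$, and that the closed star in this component of each rim vertex $(\sigma\tau^a,1)$ is a single $4$-triangle disk, so that the component is precisely the union of the two polar sheets --- a genuine $2$-sphere, the suspension of a $d$-gon, which after deleting type-$2$ vertices has Schl\"afli symbol $\{d,2\}$. In the type-B case, the sheet about $(\tau^k,1)$ is a $2d'$-disk with $d'=\operatorname{ord}(\tau^{2k})$ and rim alternating type-$1$ and type-$2$ reflections; the adjacent face across a rim edge lies in the sheet about $(\tau^{-k},1)$, so the component is the suspension of a $2d'$-gon, again a sphere, and deleting the type-$2$ vertices (whose sheets each have $4$ triangles by Corollary~\ref{cor: star2}(4)) leaves the structure $\{4,2\text{-}d'\}=D\{2,d'\}$, with equatorial valency $2$ and polar valency $d'$. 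This already establishes part (1).

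It remains to count. For each divisor $d\mid n$ with $d>1$ (and $d\ne 2$ automatically, since the only order-$2$ rotation, occurring when $n$ is even, is central), the $\phi(d)$ rotations of order $d$ form $\phi(d)/2$ inverse pairs $\{\tau^k,\tau^{-k}\}$ with $\tau^k\ne\tau^{-k}$; each pair yields $n/d$ type-A spheres $\{d,2\}$, one per coset of $\langle\tau^k\rangle$, and $n/d'$ type-B spheres, where $d'=d$ for $d$ odd and $d'=d/2$ for $d$ even. Multiplying gives $\tfrac{\phi(d)n}{2d}$ spheres $\{d,2\}$ always, and either $\tfrac{\phi(d)n}{2d}$ spheres $\{4,2\text{-}d\}$ (for $d$ odd) or $\tfrac{\phi(d)n}{d}$ spheres $\{4,2\text{-}(d/2)\}$ (for $d$ even); specializing to $n$ odd, where every admissible $d$ is odd, gives part (2), and the general case gives part (3). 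The only step requiring genuine care, rather than bookkeeping with the labels $\sigma\tau^j$, is the type-B verification: confirming that erasing type-$2$ vertices produces exactly the $\{4,2\text{-}d'\}$ cell structure with the stated valencies, and that these spheres are pairwise-disjoint components accounting for all type-B faces.
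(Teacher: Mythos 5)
Your proposal is correct and follows essentially the same route as the paper: classify the faces by where the rotation vertex sits, compute the sheets about type-2 rotation vertices (giving the $\{d,2\}$ suspension spheres, one per coset of $\langle\tau^k\rangle$) and about type-1 rotation vertices (giving the $\{4,2\text{-}d'\}$ spheres with $d'=\operatorname{ord}(\tau^{2k})$), and then count via inverse pairs of order-$d$ rotations and cosets, excluding the central rotation when $n$ is even. Your explicit type A/type B dichotomy and the check that the two polar sheets already close up into a full component are just slightly more detailed packaging of the paper's argument.
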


\begin{cor}
$\hat{Y}(D_6)=\hat{Y}(\Sigma_3)$ consists of two spheres with cell structure of type
$\{3,2\}$ and $\{4,2\text{-}3\}=D\{2,3\}$. \\
$\hat{Y}(D_8)$ consists of three spheres with cell structure of type $\{4,2\}$.
\end{cor}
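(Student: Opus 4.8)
The plan is to deduce both statements directly from Theorem~\ref{thm: dihedral} by specializing $n$, after first disposing of $\Sigma_3$ by functoriality. Since $\Sigma_3$ is abstractly isomorphic to the dihedral group $D_6$ of order $6$, and since the desingularization $Y$ — hence the cell‑modified version $\hat Y$, whose modification (erasing type‑$2$ vertices and the edges to them, keeping the type‑$1$ vertices) is performed canonically and respects the type labels because $X(f)((v,i))=(f(v),i)$ — is a functor on $\mathfrak{C}$, any isomorphism $\Sigma_3\cong D_6$ induces a cell isomorphism $\hat Y(\Sigma_3)\cong\hat Y(D_6)$ preserving genus and Schl\"afli type. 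So it suffices to read off $\hat Y(D_6)$ and $\hat Y(D_8)$ from the formulas in Theorem~\ref{thm: dihedral}.

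For $\hat Y(D_6)$ take $n=3$, which is odd, so part (2) of Theorem~\ref{thm: dihedral} applies. The only integer $d$ with $1<d\mid 3$ is $d=3$, and $\phi(3)=2$, so the count $\frac{\phi(d)n}{2d}=\frac{2\cdot 3}{2\cdot 3}=1$ gives exactly one spherical component with Schl\"afli symbol $\{3,2\}$ and exactly one with symbol $\{4,2\text{-}3\}=D\{2,3\}$, and no others. This is precisely the claimed description of $\hat Y(\Sigma_3)=\hat Y(D_6)$. As a consistency check one can rerun the star analysis of Propositions~\ref{pro: star2} and~\ref{pro: star1} by hand: the $3$-cycle $(123)$ has a single productive $\langle(123)\rangle$-orbit, namely the three transpositions, producing one $3$-triangle sheet that pairs with the sheet of $(132)$ into a $\{3,2\}$-sphere, while the type‑$1$ vertex $((123),1)$ has one sheet whose rim alternates through the reflections, pairing with that of $((132),1)$ into a $\{4,2\text{-}3\}$-sphere; these account for all the sheets of the ten vertices of $X(\Sigma_3)$.

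For $\hat Y(D_8)$ take $n=4$, which is even, so part (3) of Theorem~\ref{thm: dihedral} applies. The only integer $d$ with $3\le d\mid 4$ is $d=4$, with $\phi(4)=2$; the excluded value $d=2$ corresponds to the central rotation $\tau^{2}$. Then $\frac{\phi(d)n}{2d}=\frac{2\cdot 4}{2\cdot 4}=1$ yields one spherical component of type $\{4,2\}$, and since $d=4$ is even the theorem produces in addition $\frac{\phi(d)n}{d}=\frac{2\cdot 4}{4}=2$ spherical components of symbol $\{4,2\text{-}\frac{d}{2}\}=\{4,2\text{-}2\}$, and nothing else. The last step is to observe that a $\{4,2\text{-}2\}$ complex is the same as a $\{4,2\}$ complex: it has $\lambda_1=\lambda_2=2$, so by Lemma~\ref{lem: valence2} it satisfies $F=2$, $E=V=n=4$, $g=0$ and is exactly the sphere obtained by gluing two $4$-gons along their common rim, i.e.\ a $\{4,2\}$-tesselation. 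Hence all $1+2=3$ components of $\hat Y(D_8)$ are spheres of cell type $\{4,2\}$, as claimed.

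There is no real obstacle here; the computation is a routine specialization. The only points requiring a little care are applying the even case of Theorem~\ref{thm: dihedral} correctly (recognizing that $d=4$ contributes in \emph{both} of the ways listed, and that $d=2$ is excluded), and making the final identification $\{4,2\text{-}2\}=\{4,2\}$ via Lemma~\ref{lem: valence2}. A fully self-contained alternative would simply enumerate the five noncentral elements of $D_6$ and the six noncentral elements of $D_8$ and read off their closed stars from Propositions~\ref{pro: star2} and~\ref{pro: star1}, but invoking Theorem~\ref{thm: dihedral} is cleaner.
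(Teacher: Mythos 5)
Your proposal is correct and follows the same route the paper intends: the corollary is just Theorem~\ref{thm: dihedral} specialized to $n=3$ and $n=4$ (with $\Sigma_3\cong D_6$ handled by functoriality). Your extra step of identifying the degenerate symbol $\{4,2\text{-}2\}$ with $\{4,2\}$ via Lemma~\ref{lem: valence2} is exactly the point the paper leaves implicit, and you handle it correctly.
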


\subsection{Quaternions}

Let $Q_8=\{ \pm 1, \pm i, \pm j , \pm k \}$ denote the quaternionic group of order $8$.
One component of $Y(Q_8)$ is an octahedron with $(k,2), (-k,2)$ as north and south pole and
with the 4 vertices $(i,1), (j,1), (-i,1), (-j,1)$ along the equator. There are two more similar components obtained by cyclically permuting the roles of $i, j$ and $k$. In the corresponding cell structure these three spheres have Schl\"afli symbol $\{4,2\}$.

Thus $\hat{Y}(D_8)$ and $\hat{Y}(Q_8)$ are isomorphic as cell complexes. It is not hard to check that $Y(D_8)$ and $Y(Q_8)$ are isomorphic as simplicial complexes and so are $X(D_8)$ and
$X(Q_8)$ which consist of three octahedra which pairwise meet in a pair of antipodal vertices.
Up to homotopy equivalence we have $X(Q_8) \simeq X(D_8) \simeq \vee_{[3]} S^2 \vee \vee_{[4]} S^1$.

\subsection{Extraspecial $p$-groups}
\label{sec: extraspecial}

Let $p$ be an odd prime and $\mathbb{F}_p$ be the field of $p$ elements. Consider $U_3(p)$ the group of $3 \times 3$ upper triangular matrices with entries in $\mathbb{F}_p$ and $1$'s on the diagonal. Thus
$$
U_3(p)=\{ \begin{bmatrix} 1 & a & b \\ 0 & 1 & c \\ 0 & 0 & 1 \end{bmatrix} |\ a,b,c \in \mathbb{F}_p \}
$$
It is easy to see that $U_3(p)$ has order $p^3$ and exponent $p$ (any nonidentity element has order $p$). To see this just note that any matrix in $U_3(p)$
can be written as $\mathbb{I} + \mathbb{A}$ where $\mathbb{A}$ is a strictly upper triangular $3 \times 3$ matrix and so $\mathbb{A}$ is nilpotent with
$\mathbb{A}^3=0$. Then $(\mathbb{I}+\mathbb{A})^p = \mathbb{I}$ follows from the binomial theorem and the fact that $\binom p 1$ and
$\binom p 2$ are congruent to zero modulo $p$.  $G=U_3(p)$ is sometimes called the extra special $p$-group of exponent $p$ and order $p^3$.

If one sets $x = \begin{bmatrix}  1 & 1 & 0 \\ 0 & 1 & 0 \\ 0 & 0 & 1 \end{bmatrix}$ and $y=\begin{bmatrix}  1 & 0 & 0 \\ 0 & 1 & 1 \\ 0 & 0 & 1 \end{bmatrix}$
then one can readily check that $x$ and $y$ generate $U_3(p)$ and have commutator $[x,y]=xyx^{-1}y^{-1} = \begin{bmatrix} 1 & 0 & 1 \\ 0 & 1 & 0 \\ 0 & 0 & 1 \end{bmatrix}$ which is central and in fact generates the center of $U_3(p)$ which is a cyclic group of order $p$.
From this one can easily verify that $U_3(p)$ has presentation  $<x, y | x^p=y^p=[x,y]^p=[[x,y],x]=[[x,y],y]=1 >$. The abelianization of $U_3(p)$ is an
elementary abelian $p$-group of rank 2 (i.e. vector space of dimension $2$ over $\mathbb{F}_p$) generated by the images $\bar{x}, \bar{y}$ of $x$ and $y$.
Thus $U_3(p)$ fits into a central short exact sequence:

$$
1 \to C \to U_3(p) \to E=\mathbb{F}_p \times \mathbb{F}_p \to 1
$$
where $C$ is the center of $U_3(p)$ and is cyclic of order $p$. It is clear from this short exact sequence that $C=Frat(G)$, the Frattini subgroup of $G$. Corresponding to this extension is a commutator map $[\cdot, \cdot]: E \times E \to C$
which takes two elements of $E$, lifts them to $U_3(p)$ and looks at their commutator which lies in $C$. This map is readily checked to be well-defined
and bilinear, alternating, (see for example \cite{BrP} for details). From this map it follows that any two elements $u, v$ that do not commute in $U_3(p)$ must map
to a basis in $E$ under the abelianization and hence generate $U_3(p)$ by properties of Frattini quotients. They clearly satisfy the same
presentation that $x$ and $y$ did and hence there must be an automorphism of $U_3(p)$ taking any noncommuting pair of elements to any other noncommuting pair of elements. Finally let us note that if $\alpha$ and $\beta$ are conjugate in $U_3(p)$ they must map to the same element in the abelianization $E$.
Thus $\beta=\alpha c$ where $c$ is some element in the center $C$. In particular conjugate elements commute in $U_3(p)$ and every noncentral
element has exactly $p$ elements in its conjugacy class. In fact if $x$ is a non central element, $xC$ is its conjugacy class.

We summarize these observations in the next lemma as we will use them in determining the 2-cell structure of the components of $\hat{Y}(U_3(p))$.

\begin{lem}
\label{lem: extraspecial}
Let $p$ be an odd prime and $G=U_3(p)$ be the extraspecial group of order $p^3$ and exponent $p$. \\
(1) If $(u,v)$, $(u',v')$ are pairs of noncommuting
elements in $G$, then there exists an automorphism $\phi$ of $G$ such that $\phi(u)=u'$ and $\phi(v)=v'$. \\
(2) If $\alpha, \beta$ are conjugate in $G$, then $\alpha$ and $\beta$ commute. \\
(3) Every noncentral element $\alpha$ has exactly $p$ elements in its conjugacy class. \\
(3) If $\alpha$ does not commute with $x$ then the $\alpha$-conjugacy class of $x$ consists of exactly $p$ elements.
\end{lem}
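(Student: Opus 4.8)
The plan is to leverage the structural facts about $G=U_3(p)$ already assembled in the paragraph preceding the lemma: that $G$ is extraspecial with center $C$ of order $p$, that $C=Frat(G)$ with $G/C\cong E\cong \mathbb{F}_p\times\mathbb{F}_p$, that the induced commutator map $[\cdot,\cdot]\colon E\times E\to C$ is bilinear and alternating (hence, being nonzero on a $2$-dimensional space, nondegenerate), and that $G$ has the presentation $\langle x,y\mid x^p=y^p=[x,y]^p=[[x,y],x]=[[x,y],y]=1\rangle$. Every part then reduces to a short computation in the central extension $1\to C\to G\to E\to 1$.

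For part (1), I would first note that any noncommuting pair $(u,v)$ in $G$ generates $G$: the images $\bar u,\bar v$ in $E$ are linearly independent, since a nontrivial linear relation between them would force $[\bar u,\bar v]=0$ by bilinearity, i.e. $u$ and $v$ would commute; hence $\bar u,\bar v$ span $E$ and by the Frattini property $u,v$ generate $G$. Next, $u,v$ satisfy the defining relations above: $u^p=v^p=1$ because $G$ has exponent $p$; $[u,v]\in G'=C$ so $[u,v]^p=1$; and $[u,v]$ being central makes the last two relations hold trivially. Therefore $x\mapsto u,\ y\mapsto v$ extends to a surjective endomorphism of $G$, which is an automorphism since $G$ is finite. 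Applying this to $(u,v)$ and to $(u',v')$ yields automorphisms $\phi_1,\phi_2$ with $(\phi_i(x),\phi_i(y))$ the respective pairs, and $\phi=\phi_2\circ\phi_1^{-1}$ is the desired automorphism.

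For part (2): conjugation by any $g\in G$ induces the identity on the abelian quotient $E=G/C$, so if $\beta=g\alpha g^{-1}$ then $\bar\beta=\bar\alpha$, i.e. $\beta\alpha^{-1}\in C=Z(G)$; writing $\beta=\alpha c$ with $c$ central gives $\alpha\beta=\alpha^2 c=\beta\alpha$, so $\alpha$ and $\beta$ commute. For the conjugacy-class statement (the first part (3)): by (2) the conjugacy class of a noncentral $\alpha$ lies in the coset $\alpha C$, which has exactly $p$ elements; conversely $g\alpha g^{-1}=[g,\alpha]\alpha$, and as $g$ ranges over $G$ the commutator $[g,\alpha]$ realizes the value $[\bar g,\bar\alpha]\in C$ of the nondegenerate form, which runs over all of $C$ since $\bar\alpha\neq 0$, so the class is all of $\alpha C$ and has size $p$. (Equivalently, $C(\alpha)\supseteq\langle\alpha\rangle C$ is a proper subgroup of $G$ containing a subgroup of order $p^2$, hence has order $p^2$ and index $p$.) For the $\alpha$-conjugacy-class statement (the last part (3)): the $\alpha$-conjugacy class of $x$ is the orbit of $x$ under conjugation by the cyclic group $C_\alpha=\langle\alpha\rangle$, whose order is $p$ as $\alpha$ is a nontrivial element of a group of exponent $p$; the orbit length divides $p$ and is not $1$ because $x\notin C(\alpha)$ (as $x$ and $\alpha$ do not commute), so the orbit has exactly $p$ elements.

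The only subtlety worth flagging is the presentation argument in part (1): one must verify that an arbitrary noncommuting pair both generates $G$ and satisfies all the listed relations, so that the assignment extends to an (auto)morphism. Given the facts already established about exponent, $G'=C$, and the Frattini quotient, this is routine, and I anticipate no genuine obstacle in any part.
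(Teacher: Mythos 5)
Your proposal is correct and follows essentially the same route as the paper: parts (1)--(3) are exactly the presentation/Frattini and abelianization arguments the paper develops in the paragraph preceding the lemma (including the observation that the class of a noncentral $\alpha$ is the coset $\alpha C$), and the final part is the same divisibility argument that the orbit under $\langle\alpha\rangle$ has size dividing $p$ and greater than $1$. The extra detail you supply for surjectivity of $g\mapsto[g,\alpha]$ onto $C$ via nondegeneracy is a fine way to make explicit what the paper leaves implicit.
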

\begin{proof}
Parts (1), (2) and (3) were proved in the paragraph before the lemma. Part (4) follows as the size of an $\alpha$-conjugacy class
must divide the order of $\alpha$. If $\alpha$ does not commute with $x$ then $\alpha$ is not the identity element and hence has order $p$
and the $\alpha$-conjugacy class of $x$ must have size $> 1$ and dividing the order of $\alpha$. As $|\alpha|=p$ is prime, this size must be $p$.
\end{proof}

The next theorem determines the structure of $\hat{Y}(U_3(p))$ completely.

\begin{thm}
\label{thm: extraspecial}
Let $p$ be an odd prime and $G=U_3(p)$, the extra special $p$-group of order $p^3$ and exponent $p$.  Then in
$\hat{Y}(G)$ we have: \\
(1) All components are isomorphic as cell-complexes and there are $\frac{(p^2-1)(p^2-p)}{2}$ of them. \\
(2) The cell structure of each component of $\hat{Y}(G)$ is a regular abstract $3$-polytope which Schl\"afli symbol $\{2p,p\}$. \\
(3) These cell structures hence tessellate the Riemann surface of genus $g=\frac{p(p-3)}{2}+1$ with $2p$-gons. The  face, edge and vertex count of this regular tesselation is given by
 $F=p, E=p^2, V=2p$ and vertex valency $p$.
\end{thm}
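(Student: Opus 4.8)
The plan is to feed the strong homogeneity of $G=U_3(p)$ recorded in Lemma~\ref{lem: extraspecial} into the general theory of Theorems~\ref{thm: conjugate1}--\ref{thm: conjugate2}, Proposition~\ref{pro: star2}, Proposition~\ref{pro: divisibility}, and Theorem~\ref{thm: polytope}. First I would fix the component $T$ of $\hat{Y}(G)$ determined by a triangle $[(x,1),(y,1),(\alpha,2)]$ with $\alpha=xy$, $xy\neq yx$, and identify its combinatorial type. Noncommuting elements of $G$ map to linearly independent (hence distinct) elements of the abelianization $E=G/C$, while conjugate elements have equal image there; so $x$ and $y$ are not $G$-conjugate, a fortiori not $\alpha$-conjugate, and $T$ falls under the ``$n$ even'' case of Theorem~\ref{thm: conjugate2}. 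Since $\alpha$ commutes with neither $x$ nor $y$ (it commutes with $x$ iff $x$ commutes with $y$, and similarly for $y$), Lemma~\ref{lem: extraspecial}(4) gives that the $\alpha$-conjugacy classes of $x$ and of $y$ each have $p$ elements, so Proposition~\ref{pro: star2}(1) yields face type $n=2p$. Likewise $\lambda_1$ (the size of the $x$-conjugacy class of $\alpha$) and $\lambda_2$ (that of the $y$-conjugacy class of $\alpha$) are both $p$ by Lemma~\ref{lem: extraspecial}(4), so $T$ is equivar with Schl\"afli symbol $\{2p,p\}$.

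Next I would upgrade ``abstract $3$-polytope'' (Theorem~\ref{thm: polytope}(1)) to ``regular'' via Theorem~\ref{thm: polytope}(5): this needs automorphisms of $G$ carrying $(x,y)$ to $(y,y^{-1}xy)$ and to $(y^{-1},x^{-1})$. Both target pairs are noncommuting ($y$ commutes with $y^{-1}xy$ iff $xy=yx$; $y^{-1}$ commutes with $x^{-1}$ iff $xy=yx$), so by Lemma~\ref{lem: extraspecial}(1) such automorphisms exist, and $T$ is a regular abstract $3$-polytope $\{2p,p\}$, proving part (2).

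For part (3) I would combine the relations $nF=2E$, $nF=\lambda V$ of Theorem~\ref{thm: conjugate2}(3) (giving $E=pF$, $V=2F$) with a sharp bound on $F$. By Theorem~\ref{thm: conjugate1} every type-$2$ vertex of $T$ is a conjugate of $\alpha$, and the conjugacy class of the noncentral element $\alpha$ is the single coset $\alpha C$, of size $p$. Moreover at most one sheet about a fixed $(\beta,2)$ can lie in $T$: the link of any sheet of $T$ consists of type-$1$ vertices of $T$, all conjugates of $x$ or $y$, hence all inside the two cosets $xC\cup yC$; but the link of a sheet about a noncentral vertex is a union of two full $C$-cosets (each $\beta$-conjugacy class of a noncommuting element has size $p=|C|$ by Lemma~\ref{lem: extraspecial}(4) and lies in a $G$-conjugacy class, which is one coset of $C$), and distinct sheets about $(\beta,2)$ have disjoint links, so at most one of them fits inside $xC\cup yC$. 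Hence $F\leq p$. On the other hand Proposition~\ref{pro: divisibility}(1) says $F\geq 3$ divides $|G|=p^3$, so $F\in\{p,p^2,p^3\}$; with $F\leq p$ this forces $F=p$, whence $E=p^2$, $V=2p$, valency $\lambda=p$, and $2-2g=V-E+F=3p-p^2$, i.e. $g=\tfrac{p(p-3)}{2}+1$.

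Finally, for part (1): by Lemma~\ref{lem: extraspecial}(1) the group $\mathrm{Aut}(G)$ acts transitively on ordered noncommuting pairs, hence on triangles of $X(G)$, hence on components of $Y(G)$, so all components are isomorphic as cell complexes, each containing $nF=2p^2$ triangles. The number of triangles of $X(G)$ is the number of ordered noncommuting pairs, $|G|^2-\sum_{x}|C(x)|=p^6-|G|c$ with $c=p+\tfrac{p^3-p}{p}=p^2+p-1$ by Lemma~\ref{lem: centralizer sum}, which equals $p^3(p-1)^2(p+1)$; dividing by $2p^2$ gives $\tfrac{p(p-1)^2(p+1)}{2}=\tfrac{(p^2-1)(p^2-p)}{2}$ components. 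I expect the main obstacle to be pinning down $F$ (equivalently $g$), since the equations of Theorem~\ref{thm: conjugate2}(3) underdetermine the data; closing that gap rests on the interplay of ``at most $p$ faces'' with ``$F\mid p^3$'', and I would be careful to justify cleanly the claim that $\alpha$-conjugacy classes of elements not commuting with $\alpha$ are exactly cosets of $C$ (from Lemma~\ref{lem: extraspecial}(2)--(4), or directly from the alternating bilinear commutator form on $E$).
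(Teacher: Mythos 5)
Your proposal is correct, and it follows the paper's overall skeleton (Lemma~\ref{lem: extraspecial} fed into Proposition~\ref{pro: star2}, Theorems~\ref{thm: conjugate1}, \ref{thm: conjugate2} and \ref{thm: polytope}(5)), so the identification $n=2p$, $\lambda_1=\lambda_2=p$ and the regularity argument coincide with the paper's. You diverge at the two pressure points, and both of your alternatives work. For $F=p$, the paper rules out multiplicity of type-$2$ vertices by explicitly parametrizing the triangles of a sheet about $(xy,2)$ inside the component as $[(xz^a,1),(yz^b,1),(xy,2)]$ and forcing $b\equiv -a \bmod p$, so that exactly one sheet per label survives and $F=p$ outright; you instead prove the upper bound $F\le p$ by noting each sheet's link is a union of two full central cosets, that sheets about a common centre have disjoint links, and that all type-$1$ labels of the component lie in $xC\cup yC$, and then close the gap with Proposition~\ref{pro: divisibility}(1) ($3\le F\mid p^3$). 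That is legitimate, though the divisibility appeal is not even needed: since $\lambda_1=p$, the vertex $(x,1)$ already lies in $p$ faces, giving $F\ge p$ directly (which is essentially the paper's lower bound via the $x$-conjugates of $(xy,2)$). For the component count, the paper identifies components with unordered pairs of noncommuting conjugacy classes, i.e.\ unordered bases of the Frattini quotient $E$, using $V=2p$ to exclude type-$1$ multiplicity; you instead do a mass count, dividing the total number of triangles $|G|^2-|G|c=p^3(p-1)^2(p+1)$ (via Lemma~\ref{lem: centralizer sum}, $c=p^2+p-1$) by the $nF=2p^2$ triangles per component, with $\mathrm{Aut}(G)$-transitivity guaranteeing all components have equal size. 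Your count is arguably tidier in that it sidesteps the injectivity point the paper passes over quickly (that two distinct components cannot carry the same pair of conjugacy classes), at the cost of giving a less explicit picture of an individual component; the paper's version yields the sharper structural statement that each component consists exactly of the cosets $xC$, $yC$ and $xyC$ without repetition.
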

\begin{proof}
If $T$ is the component of $Y(G)$ determined by the triangle \\ $[(x,1), (y,1), (xy,2)]$ and $T'$ is the component of $Y(G)$ determined by the
triangle $[(u,1), (v,1), (uv,2)]$, then Lemma~\ref{lem: extraspecial} guarantees the existence of a group automorphism and hence simplicial automorphism
of $Y(G)$ (and also $X(G)$) which takes one triangle to the other and hence induces a simplicial isomorphism of the component $T$ with the component $T'$.
This simplicial isomorphism induces a cell-isomorphism between the cell structures of these two components in $\hat{Y}(G)$ also.

In the component determined by the triangle $[(x,1),(y,1),(xy,2)]$, all type $2$-vertices are conjugate to $xy$. Since the  $x$-conjugacy class of $xy$
must all occur as type 2-vertices by Proposition~\ref{pro: star1} there are at least $p$ of these in the component. However $xy$ only has $p$-conjugates so all conjugates of $xy$ occur. As $x$ and $y$ do not commute, they are not conjugate by Lemma~\ref{lem: extraspecial}. Thus by Proposition~\ref{pro: star2},
the sheets in the closed star of $(xy,2)$ in that component consist of two distinct $xy$-conjugacy classes of triangles each of order $p$ and hence forms a
$2p$-gon in the corresponding cell-structure. Now there is a $2p$-gon face in the component for each type-2 vertex in that component and these consist
of the $p$ conjugates of $(xy,2)$ with possible multiplicities due to the desingularization process (there can be more than one type-2 vertex labeled with the same group element in a given component because of the desingularization).

We wish to show there is no multiplicity of type $2$-vertices in a fixed component and hence that there are exactly $p$ of these faces.
To do this, by symmetry (we have already shown there are automorphisms which will take any triangle to any other in $X(G)$), it is enough to show that
there is exactly one sheet about a type-2 vertex labelled $(xy,2)$ in the pseudo-manifold component of $X(G)$ that contains the triangle $[(x,1),(y,1),(xy,2)]$ (before desingularization). In other words there is only one disk in the closed star of $(xy,2)$ which is a bouquet of disks that lies in that pseudomanifold component. Note that all the type-1 vertices in the given  pseudomanifold-component are conjugate
to either $x$ or $y$ by Theorem~\ref{thm: conjugate1} and hence of the form $xz^a, yz^b$ where $z$ is a generator of the center and $0 \leq a, b < p$. Thus any triangle in a sheet about a
type-2 vertex labeled $(xy,2)$ in the given pseudo-manifold component of $X(G)$ (before desingularization) has to be of the form $[(xz^a,1), (yz^b,1), (xy,2)]$ or of the form $[(yz^b, 1), (y^{-1}xyz^a, 1), (xy,2)]$ which forces $b \equiv -a \text{ mod } p$. This means there there are at most $2p$ such triangles and hence exactly one such sheet in that pseudo manifold component.
Thus there are no multiply labelled type $2$-vertices in a given component in $Y(G)$ and we can conclude that each component of $\hat{Y}(G)$ has
cell-structure given by exactly $p$ many $2p$-gon faces.

By part (5) of Theorem~\ref{thm: polytope} and Lemma~\ref{lem: extraspecial} we see that the cell structure of each component forms a {\bf regular}
abstract $3$-polytope. By Theorem~\ref{thm: conjugate2} the vertex valency is the size of the $x$-conjugacy class of $xy$ which is $p$.
Thus this regular abstract $3$-polytope has Schl\"afli symbol $\{ 2p, p \}$. As $F=p$, the formulas in part (3) of the same theorem can then be used to yield
the stated values of $E, V$ and $g$.

As $V=2p$ in the cell structure, we see that the $p$ conjugates of $(x,1)$ and $(y,1)$ occur without multiplicity in the component (alternatively one can mimic the proof used for type 2-vertices earlier). Thus each component consists exactly of two noncommuting conjugacy classes in $U_3(p)$ and is determined by the unordered pair of these. These pairs in turn are determined uniquely by the two linearly independent vectors in the abelianization $E$ of $U_3(p)$ that they project to. Thus the number of components of $Y(U_3(p))$ is the same as the number of unordered pairs of noncommuting conjugacy classes which is the same as the number of unordered basis of $E$, a $\mathbb{F}_p$-vector space of dimension 2. This number is easily computed as
$\frac{(p^2-1)(p^2-p)}{2}$.

\end{proof}

Note when $p=3$ in the last theorem, we see that $U_3(3)$ provides a regular tesselation of the
torus by $3$ hexagons with vertex valancy $3$. When $p=5$, $U_3(5)$ provides a regular
tesselation of the surface of genus $6$ by five $10$-gons with vertex valancy $5$.

\begin{cor}
As one varies the construction $X(G)$ over all finite nonabelian groups, the set of genuses of components that occur is infinite.
\end{cor}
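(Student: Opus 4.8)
The plan is to exhibit, for infinitely many values of $g$, a single group whose construction has a component of genus $g$, and the family $\{U_3(p) : p \text{ an odd prime}\}$ furnishes exactly this. By Theorem~\ref{thm: extraspecial}(3), for each odd prime $p$ the complex $\hat{Y}(U_3(p))$ consists of components which tessellate the closed surface of genus
$$
g_p = \frac{p(p-3)}{2} + 1,
$$
and since passing from $Y(G)$ to $\hat{Y}(G)$ only modifies the cell structure (it does not change the underlying surface, as noted when $\hat{Y}(G)$ was defined), these are also the genera of the corresponding components of $Y(U_3(p))$, hence of pseudomanifold components of $X(U_3(p))$. In particular $m_{g_p}(U_3(p)) > 0$ for every odd prime $p$.

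It remains only to observe that the set $\{g_p : p \text{ an odd prime}\}$ is infinite. The function $p \mapsto \tfrac{p(p-3)}{2}+1$ is strictly increasing for real $p \geq 3$ (its derivative $\tfrac{2p-3}{2}$ is positive there), so distinct primes give distinct genera, and since there are infinitely many primes this set is infinite; concretely $g_3 = 1$, $g_5 = 6$, $g_7 = 15$, and $g_p \to \infty$. Therefore infinitely many distinct genera $g$ satisfy $m_g(G) > 0$ for some finite nonabelian group $G$, which is exactly the assertion that the set of genuses of components occurring as one ranges over all finite nonabelian groups is infinite.

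There is essentially no obstacle here: the content is entirely contained in Theorem~\ref{thm: extraspecial}, and the corollary is the routine asymptotic consequence of the genus formula together with the infinitude of the primes. The only point requiring a line of care is to note that the genus is an invariant of the underlying surface and hence is unaffected by the cell-structure modification taking $Y(G)$ to $\hat{Y}(G)$, so that the genera computed for $\hat{Y}(U_3(p))$ are legitimately genera of components of $X(U_3(p))$ in the sense of the statement.
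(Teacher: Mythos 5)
Your proof is correct and follows the same route as the paper: both invoke Theorem~\ref{thm: extraspecial} for the extraspecial groups $U_3(p)$ and note that the genera $\frac{p(p-3)}{2}+1$ form an infinite set as $p$ ranges over odd primes. Your extra remark that passing from $Y(G)$ to $\hat{Y}(G)$ does not change the underlying surface (hence the genus) is a reasonable clarification, but otherwise the argument coincides with the paper's.
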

\begin{proof}
Theorem~\ref{thm: extraspecial} shows that the set of genuses obtained when looking at extraspecial groups over all odd primes $p$ is infinite and so the corollary follows.
\end{proof}

For $G$ a group, let $\widetilde{Aut}(G)$ denote the extended automorphism group which consists of automorphisms and anti-automorphims of the group $G$ under composition.
$Aut(G)$ is a subgroup of $\widetilde{Aut}(G)$ of index $2$ (hence normal) as any anti-automorphism of $G$ is the composition of the inversion map $I(x)=x^{-1}$ with an automorphism. Furthermore it is easy to verify that if $\phi$ is an automorphism then
$\phi(x^{-1})^{-1}=\phi(x)$ and so $\phi$ commutes with $I$. Thus $I$ generates a central
subgroup of order $2$ complementary to $Aut(G)$ in $\widetilde{Aut}(G)$ and so
$\widetilde{Aut}(G) \cong Aut(G) \times \mathbb{Z}/2\mathbb{Z}$.

The anti-automorphism group of $U_3(p)$ acts transitively on the set of components of
$\hat{Y}(G)$. If $H$ is the stabilizer of a component in this action, then $H$ acts transitively
on the flags in the cell-structure of this component by Theorem~\ref{thm: polytope} and its proof. As the group elements represented by the vertices of this component generate the group $U_3(p)$, this action is also faithful. Thus $H$ is the abstract flag symmetry group of the
regular abstract $3$-polytope represented by this component. Just as the isometry group of classical regular polyhedra are reflection groups, it has been shown that the automorphism groups of abstract regular polyhedra are generated by involutions. It follows from this theory (see \cite{MS}) that $H$ is a C-string group generated by three involutions $\psi_0, \psi_1,\psi_2$ such that $(\psi_0\psi_1)^{2p}=1$ and $(\psi_1\psi_2)^p=1$ and $\psi_0, \psi_2$
commute.

Thus we have obtained the following corollary also:

\begin{cor}
Let $p$ be an odd prime and $G=U_3(p)$ be the extraspecial group of exponent $p$ and order $p^3$. Then there exists a subgroup $H$ of $Aut(G) \times \mathbb{Z}/2\mathbb{Z}$
of index $\frac{(p^2-1)(p^2-p)}{2}$ such that $H$ is a C-string group generated by three involutions
$\psi_0, \psi_1, \psi_2$ which satisfy $(\psi_0\psi_1)^{2p}=1=(\psi_1\psi_2)^p$ and
$\psi_0, \psi_2$ commute.
\end{cor}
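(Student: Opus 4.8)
The plan is to realize $H$ as the stabilizer, inside $\widetilde{Aut}(G) \cong Aut(G)\times\mathbb{Z}/2\mathbb{Z}$, of one of the component polytopes $T$ of $\hat{Y}(G)$, and then to identify $H$ with the full automorphism group of $T$ as an abstract regular $3$-polytope. First I would recall that $\widetilde{Aut}(G)$ acts on $\hat{Y}(G)$ by cell automorphisms (automorphisms of $G$ act by orientation-preserving simplicial, hence cell, automorphisms by the functoriality established earlier, and the inversion anti-automorphism $I$ by an orientation-reversing one), and that this action permutes the components. By Lemma~\ref{lem: extraspecial}(1) and the description of components in the proof of Theorem~\ref{thm: extraspecial}, already $Aut(G)$ acts transitively on the set of components: any two reference triangles $[(x,1),(y,1),(xy,2)]$ and $[(u,1),(v,1),(uv,2)]$ are interchanged by an automorphism of $G$ carrying the noncommuting pair $(x,y)$ to $(u,v)$, and such an automorphism carries one component onto the other. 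Since Theorem~\ref{thm: extraspecial}(1) says there are exactly $N := \frac{(p^2-1)(p^2-p)}{2}$ components, the orbit-stabilizer theorem gives that the stabilizer $H$ of a fixed component $T$ has index $N$ in $\widetilde{Aut}(G)$.

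Next I would show that the restriction homomorphism $H \to Aut(T)$ to the cell-automorphism group of $T$ (equivalently, the automorphism group of the abstract $3$-polytope $T$) is injective: the type $1$ vertices of $T$ represent a pair of noncommuting elements of $U_3(p)$ together with their conjugates, and since any two noncommuting elements generate $U_3(p)$ by the Frattini-quotient argument recalled before Lemma~\ref{lem: extraspecial}, an element of $H$ acting trivially on $T$ fixes a generating set of $G$ pointwise and so is the identity of $\widetilde{Aut}(G)$. For surjectivity I would invoke Theorem~\ref{thm: polytope}(4)--(5) together with Lemma~\ref{lem: extraspecial}(1): the hypotheses of those parts are met here (there is an automorphism of $U_3(p)$ sending $x\mapsto y^{-1}, y\mapsto x^{-1}$ and another sending $x\mapsto y, y\mapsto y^{-1}xy$), so the explicit automorphisms and anti-automorphisms constructed in that proof are flag-transitive symmetries of $T$; each of them fixes the reference face of $T$ setwise and hence lies in $H$. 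Thus $H$ itself already acts transitively on the flags of $T$. Since the cell-automorphism group of a regular abstract $3$-polytope acts simply transitively on its flags (standard, see \cite{MS}), any flag-transitive subgroup of it must be the whole group, so $H \cong Aut(T)$.

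Finally I would appeal to the structure theory of regular abstract polytopes (see \cite{MS}): for a regular abstract $3$-polytope with Schl\"afli symbol $\{n_1,n_2\}$, the automorphism group is a C-string group generated by three involutions $\psi_0,\psi_1,\psi_2$ satisfying $(\psi_0\psi_1)^{n_1}=(\psi_1\psi_2)^{n_2}=1$ and $(\psi_0\psi_2)^2=1$, the last relation being the string condition, which for involutions is equivalent to $\psi_0$ and $\psi_2$ commuting. By Theorem~\ref{thm: extraspecial}(2) the component $T$ has Schl\"afli symbol $\{2p,p\}$, so $H\cong Aut(T)$ is a C-string group generated by involutions $\psi_0,\psi_1,\psi_2$ with $(\psi_0\psi_1)^{2p}=1=(\psi_1\psi_2)^p$ and $\psi_0,\psi_2$ commuting, which is the assertion. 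Throughout, the hypothesis that $p$ is odd is used only through Lemma~\ref{lem: extraspecial}.

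The step I expect to be the main obstacle is the surjectivity of $H\to Aut(T)$: one must check carefully that the automorphisms and anti-automorphisms produced in the proof of Theorem~\ref{thm: polytope} genuinely stabilise the chosen component $T$ (they do, since they fix its reference face setwise), and that faithfulness of the $H$-action on flags together with simple transitivity of the full automorphism group forces $H$ to be everything. The rest is bookkeeping with the orbit-stabilizer theorem and the already-established count of components and Schl\"afli symbol.
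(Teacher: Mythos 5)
Your proposal is correct and follows essentially the same route as the paper: realize $H$ as the stabilizer in $\widetilde{Aut}(G)\cong Aut(G)\times\mathbb{Z}/2\mathbb{Z}$ of a component, get the index from transitivity on the $\frac{(p^2-1)(p^2-p)}{2}$ components, identify $H$ with the flag symmetry group of the regular polytope via flag-transitivity (Theorem~\ref{thm: polytope}) and faithfulness (vertices generate $G$), and then quote the McMullen--Schulte C-string structure for Schl\"afli symbol $\{2p,p\}$. You actually spell out more carefully than the paper the injectivity/surjectivity of $H\to Aut(T)$ and the use of simple flag-transitivity, but the argument is the same.
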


\bigskip

\noindent
Dept. of Mathematics \\
University of Rochester, \\
Rochester, NY 14627 U.S.A. \\
E-mail address: jonpak@math.rochester.edu \\

\bigskip

\noindent
Dept. of Mathematics \\
University of Rochester, \\
Rochester, NY 14627 U.S.A. \\
E-mail address: herman@math.rochester.edu \\

\bigskip

\noindent
Dept. of Mathematics \\
Bilkent University, \\
Ankara, Turkey. \\
E-mail address: yalcine@fen.bilkent.edu.tr \\


\begin{thebibliography}{EMGS}

\bibitem[Be]{Be}{D.J. Benson,}
\textit{Representations and Cohomology II,} Cambridge Studies in advanced mathematics, {\bf 31}, Cambridge University Press 1991.


\bibitem[BrP]{BrP}{W. Browder, J. Pakianathan,}
\textit{Cohomology of Uniformly Powerful $p$-groups,}
Trans. Amer. Math. Soc., {\bf 352} (2000), 2659-2688.

\bibitem[B]{Brown}{K. S. Brown,}
\textit{Cohomology of Groups}, Springer Verlag GTM 87, New
York-Heidelberg-Berlin, 1994.


\bibitem[CD]{CD}{M. Conder, P. Dobcs\'anyi,}
\textit{Determination of all regular maps of small genus,} J. Combinatorial Theory, Series B, {\bf 81} (2001), 224-242.

\bibitem[Cox]{Cox}{H. S. M. Coxeter,}
\textit{Regular Polytopes,} Dover Publications Inc., (1973).

\bibitem[Cox2]{Cox2}{H. S. M. Coxeter,}
\textit{Regular Complex Polytopes,} Cambridge University Press, (1991).




\bibitem[HP]{HP}{M. Herman, J. Pakianathan,}
\textit{On a canonical construction of tesselated surfaces via finite group theory, Part II,}
Preprint.

\bibitem[MZ1]{MZ1}{C. May, J. Zimmerman,}
\textit{Groups of small strong symmetric genus,}
J. Group Theory, {\bf 3(3)} (2000), 233-245.



\bibitem[MS]{MS}{P. McMullen, E. Schulte,}
\textit{Abstract Regular Polytopes,} Volume {\bf 92} of Encyclopedia of Mathematics and It's Applications, Cambridge University Press,
Cambridge, 2002.

\bibitem[Mu]{Mu}{Munkres,}
\textit{Elements of Algebraic Topology,} Addison-Wesley Publishing Co., Inc., 1984.

\bibitem[PY]{PY}{J. Pakianathan, E. Yalc$\i$n,}
\textit{On Commuting and Non-Commuting Complexes,}
Journal of Algebra, {\bf 236} (2001), 396-418.

\bibitem[TT]{TT}{T. W. Tucker,}
\textit{Finite Groups Acting on Surfaces and the Genus of a Group,}
Journal of Combinatorial Theory, Series B, {\bf 34}, (1983) 82-98.


\end{thebibliography}
\end{document}